\newcommand{\R}{\mathbb{R}}
\newcommand{\RR}{\mathbb{R}\cup\{+\infty\}}
\newcommand{\dom}{\operatorname{dom}}
\newtheorem{theorem}{Theorem}[section]
\newtheorem{fact}{Fact}[section]
\theoremstyle{definition}
\newtheorem{definition}{Definition}[section]
\newtheorem{lemma}{Lemma}[section]
\newtheorem{proposition}{Proposition}[section]
\newtheorem{experiment}{Experiment}
\theoremstyle{remark}
\newtheorem{remark}{Remark}[section]
\newtheorem{example}{Example}[section]
\newcommand{\qede}{\hspace*{\fill}$\Diamond$\medskip}
\renewcommand{\labelenumi}{(\roman{enumi})}
\begin{document}
\title{The Boosted DC Algorithm \\for nonsmooth functions }

\author{Francisco J. Arag\'on Artacho\footnote{Department of Mathematics, University of Alicante, Alicante, Spain; email: francisco.aragon@ua.es},\,
 Phan T. Vuong\footnote{Faculty of Mathematics, 	University of Vienna,  	Oskar-Morgenstern-Platz 1, 1090 Vienna,
 	Austria; email: vuong.phan@univie.ac.at}}

\date{\today}

\maketitle
\begin{abstract}
The Boosted Difference of Convex functions Algorithm (BDCA) was recently proposed for minimizing smooth difference of convex (DC) functions. BDCA accelerates the convergence of the classical Difference of Convex functions Algorithm (DCA) thanks to an additional line search step. The purpose of this paper is twofold. Firstly, to show that this scheme can be generalized and successfully applied to certain types of nonsmooth DC functions, namely, those that can be expressed as the difference of a smooth function and a possibly nonsmooth one. Secondly, to show that there is complete freedom in the choice of the trial step size for the line search, which is something that can further improve its performance. We prove that any limit point of the BDCA iterative sequence is a critical point of the  problem under consideration, and that the corresponding  objective value is monotonically decreasing and convergent. The global convergence and convergent rate of the iterations are obtained under the Kurdyka--\L{}ojasiewicz property. Applications and numerical experiments for two problems in data science are presented, demonstrating that BDCA outperforms DCA. Specifically, for the Minimum Sum-of-Squares Clustering problem, BDCA was on average sixteen times faster than DCA, and for the Multidimensional Scaling problem, BDCA was three times faster than DCA.
\end{abstract}

\paragraph{\bf Keywords:} Difference of convex functions;  Boosted Difference of Convex functions Algorithm; Kurdyka--\L{}ojasiewicz property;
Clustering problem; Multidimensional Scaling problem.

\paragraph{\bf AMS subject classifications:} 65K05, 65K10, 90C26, 47N10

\section{Introduction}

In this paper, we are interested in the following DC (difference of convex) optimization problem
\begin{equation}
\left(\mathcal{P}\right)\ \text{\ensuremath{\underset{x\in\mathbb{R}^{m}}{\textrm{minimize}}}}\; g(x)-h(x)=: \phi(x),\label{eq:DC_2norm}
\end{equation}
where $g:\R^m\to\RR$ and $h:\R^m\to\RR$ are proper convex functions, with the conventions
\begin{gather*}
(+\infty)-(+\infty)=+\infty,\\
(+\infty)-\lambda=+\infty\quad\text{and}\quad\lambda-(+\infty)=-\infty,\quad\forall\lambda\in{]-\infty,+\infty[}.
\end{gather*}

For solving~$\left(\mathcal{P}\right)$, one usually applies the well-known DC Algorithm (DCA) \cite{An2014,an_numerical_1996,TT97} (see \cref{sec:DCA}). DC programming and the DCA have been investigated and developed for more than 30 years \cite{An2018}.
The DCA has been successfully applied in many fields, such as
machine learning, financial optimization, supply chain management and telecommunication \cite{tao2005dc,an_numerical_1996,An2018}.
If both functions $g$ and $h$ are differentiable, then the Boosted DC Algorithm (BDCA) developed in~\cite{BDCA2018} can be applied to accelerate the convergence of DCA.
Numerical experiments with various biological data sets in \cite{BDCA2018} showed that BDCA outperforms DCA,
being on average more than four times faster in both computational time and the number of iterations.
This advantage has been also confirmed when applying BDCA to the Indefinite Kernel Support Vector Machine problem \cite{XuXue2017}.

The purpose of the present paper is to develop a version of BDCA when the function~$\phi$ is not differentiable. Unfortunatelly, when $g$ is not differentiable,
the direction used by BDCA may no longer be a descent direction (see \cref{ex:failure}). For this reason, we shall restrict ourselves to the case where
$g$ is assumed to be differentiable but $h$ is not.
The motivation for this study comes from
many applications of DC programming where the objective function
is the difference of a smooth convex function and a nonsmooth convex function.
We mention here the Minimum
Sum-of-Squares Clustering problem~\cite{CYY2018},
the Bilevel Hierarchical Clustering problem \cite{Nam2018},  the Multicast Network Design problem \cite{Geremew2018}, and the Multidimensional Scaling problem \cite{Tao01}, among others.

The paper is organized as follows. In \cref{sec:Preliminaries}, we  recall some basic concepts and properties of convex analysis.
As we are working with nonconvex and nonsmooth functions, we need some tools
from variational analysis for generalized differentiability. 

Our main contributions are in \cref{sec:DCA}, where
we propose a nonsmooth version of the BDCA introduced in \cite{BDCA2018}.
More precisely, we prove that the point generated by the DCA
provides a descent direction for the objective function
at this point, even at points where the function $h$ is not differentiable. This is the key property allowing us to employ a simple line search along the descent direction, which permits to achieve a larger decrease
in the value of the objective function.

In \cref{sec:KL}, we investigate the global convergence and convergence rate of the BDCA.
The convergence analysis relies on the Kurdyka--\L{}ojasiewicz inequality. These concepts of real algebraic
geometry were introduced by  \L{}ojasiewicz \cite{lojasiewicz1965ensembles} and Kurdyka \cite{Kurdyka}
 and later developed in the nonsmooth setting by Bolte, Daniilidis, Lewis and Shiota \cite{bolteArisLewis2007},
 and Attouch, Bolte, Redont, and Soubeyran \cite{Attouch2010}, among many others \cite{AnNam2017,attouch2009convergence,BB2018,bolte2007lojasiewicz,Bolte2013,Noll2014}.

In \cref{sec:numa}, we begin by introducing a self-adaptive strategy for choosing the trial step size for the line search step. We show that this strategy permits to further improve the numerical results obtained in~\cite{BDCA2018} for the above-mentioned problem arising in biochemistry, being BDCA almost seven times faster than DCA on average. Next, we present an application of BDCA to two important classes of DC programming problems in engineering: the Minimum Sum-of-Squares Clustering problem and the Multidimensional Scaling problem. We present some numerical experiments on large data sets, both with real and randomly generated data, which clearly show that BDCA outperforms DCA. Namely, on average, BDCA was sixteen times faster than DCA for the Minimum Sum-of-Squares Clustering and three times faster for the Multidimensional Scaling problems.
We conclude the paper with some remarks and future research directions in the last section.

\section{Preliminaries} \label{sec:Preliminaries}
Throughout this paper, the inner product of two vectors $x,y\in\mathbb{R}^{m}$
is denoted by $\langle x,y\rangle$, while $\|\cdot\|$ denotes the
induced norm, defined by $\|x\|=\sqrt{\langle x,x\rangle}$. The closed ball of center $x$ and
radius $r>0$ is denoted by $\mathbb{B}(x,r)$.
\subsection{Tools of convex and variational analysis}
In this subsection, we recall some basic concepts and results of
convex analysis and generalized differentiation for nonsmooth
functions, which will be used in the sequel.

For an extended real-valued function $f:\mathbb{R}^{m}\to\mathbb{R} \cup \{+\infty\}$, the domain of $f$ is the
set
$$
\dom f = \left\lbrace x \in \mathbb{R}^{m}:  f(x) < +\infty  \right\rbrace.
$$
The function $f$ is said to be proper if its domain is nonempty.
It is said to be \emph{convex} if
$$
f(\lambda x+(1-\lambda)y)\leq\lambda f(x)+(1-\lambda)f(y)\quad\text{for all }x,y\in\mathbb{R}^{m}\text{ and }\lambda\in{]0,1[},
$$
and $f$ is said to be \emph{concave} if $-f$ is convex.
Further, $f$ is called \emph{strongly convex} with modulus $\rho>0$
if for all $x,y\in\mathbb{R}^{m}$ and $\lambda\in{]0,1[}$,
\[
f(\lambda x+(1-\lambda)y)\leq\lambda f(x)+(1-\lambda)f(y)-\frac{1}{2}\rho\lambda(1-\lambda)\|x-y\|^{2},
\]
or, equivalently, when $f-\frac{\rho}{2}\|\cdot\|^{2}$ is convex.
The function $f$ is said to be \emph{coercive} if ~$f(x)\to+\infty$
whenever $\left\Vert x\right\Vert \to+\infty.$ The gradient of a function $f:\mathbb{R}^{m}\to\RR$ which is differentiable
at some point $x$ in the interior of $\dom f$ is denoted by $\nabla f(x)$. We denote by $f'(x,d)$ the one-sided directional derivative of $f$ at $x\in\dom f$ for the direction $d\in\R^m$, defined as
 $$
 f'(x;d) := \lim_{t \downarrow 0} \frac{f(x+ t d)-f(x)}{t}.
 $$

A function $F:\mathbb{R}^{m}\to\mathbb{R}^{m}$
is said to be \emph{monotone} when
\[
\langle F(x)-F(y),x-y\rangle\geq0\quad\text{for all }x,y\in\mathbb{R}^{m}.
\]
Further, $F$ is called \emph{strongly monotone} with modulus $\rho>0$
when
\[
\langle F(x)-F(y),x-y\rangle\geq\rho\|x-y\|^{2}\quad\text{for all }x,y\in\mathbb{R}^{m}.
\]
 The function $F$ is called \emph{Lipschitz continuous} if there
is some constant $L\geq0$ such that
\[
\|F(x)-F(y)\|\leq L\|x-y\|,\quad\text{for all }x,y\in\mathbb{R}^{m},
\]
and $F$ is said to be locally Lipschitz continuous if, for every $x$ in
$\mathbb{R}^{m}$, there exists a neighborhood $U$ of $x$ such
that $F$ restricted to $U$ is Lipschitz continuous.

We have the following well-known result (see, e.g., \cite[Exercise~12.59]{RW}).
\begin{fact}\label{fact:strongly_convex}
A function $f:\mathbb{R}^{m}\to\mathbb{R}\cup \{+\infty\}$ is strongly convex with modulus $\rho$ if and only if $\partial f$ is strongly monotone with modulus $\rho$.
\end{fact}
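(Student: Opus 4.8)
The plan is to reduce the statement to the classical unmodulated case $\rho=0$, namely the well-known equivalence ``$g$ convex $\iff \partial g$ monotone,'' by means of the quadratic shift already recalled in this section. Set $g := f - \tfrac{\rho}{2}\|\cdot\|^2$, so that, by definition, $f$ is strongly convex with modulus $\rho$ if and only if $g$ is convex. Since $q(x) := \tfrac{\rho}{2}\|x\|^2$ is finite and differentiable everywhere with $\nabla q(x) = \rho x$, the sum rule for subdifferentials yields $\partial f(x) = \partial g(x) + \rho x$ at every $x \in \dom f$. Thus the whole argument hinges on transporting the strong monotonicity of $\partial f$ into the plain monotonicity of $\partial g$ and back.

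Next I would carry out the elementary algebra linking the two monotonicity notions. Fix $x, y \in \dom f$ and pick $u \in \partial f(x)$, $v \in \partial f(y)$; by the sum rule we may write $u = \tilde u + \rho x$ and $v = \tilde v + \rho y$ with $\tilde u \in \partial g(x)$ and $\tilde v \in \partial g(y)$. Then
\[
\langle u - v, x - y\rangle = \langle \tilde u - \tilde v, x - y\rangle + \rho\langle x - y, x - y\rangle = \langle \tilde u - \tilde v, x - y\rangle + \rho\|x - y\|^2.
\]
Consequently $\langle u - v, x - y\rangle \geq \rho\|x - y\|^2$ holds for all such choices if and only if $\langle \tilde u - \tilde v, x - y\rangle \geq 0$ holds for all such choices; that is, $\partial f$ is strongly monotone with modulus $\rho$ precisely when $\partial g$ is monotone. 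Because this correspondence is a genuine equivalence, proving both implications of the Fact reduces to proving the two implications of the base equivalence for $g$.

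It remains to establish that $g$ is convex if and only if $\partial g$ is monotone. The forward implication is the easy one: if $g$ is convex, then for $\tilde u \in \partial g(x)$ and $\tilde v \in \partial g(y)$ the subgradient inequalities give $g(y) \geq g(x) + \langle \tilde u, y - x\rangle$ and $g(x) \geq g(y) + \langle \tilde v, x - y\rangle$, and adding them yields $\langle \tilde u - \tilde v, x - y\rangle \geq 0$, so $\partial g$ is monotone. Unwinding the shift, this immediately delivers the direction ``$f$ strongly convex $\Rightarrow \partial f$ strongly monotone.''

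The converse, ``$\partial g$ monotone $\Rightarrow g$ convex,'' is the genuine obstacle, since one must reconstruct a global inequality on $g$ from purely local first-order data. My plan here is to restrict to line segments: for fixed $x_0, x_1$ and $x_t := (1-t)x_0 + t x_1$, the monotonicity of $\partial g$ forces the scalar map $t \mapsto \varphi(t) := g(x_t)$ to have a nondecreasing one-sided derivative wherever it is defined, and a one-dimensional mean value / integration argument then recovers the convexity inequality $\varphi(t) \leq (1-t)\varphi(0) + t\varphi(1)$. Making this rigorous requires the usual regularity hypothesis that $g$ (equivalently $f$) be lower semicontinuous, together with the fact that subgradients exist densely along the segment; this is exactly the content of the classical result recorded in \cite[Exercise~12.59]{RW}, on which I would rely for the technical core. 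Once this base equivalence is in hand, the shift computation of the previous paragraph yields the remaining implication ``$\partial f$ strongly monotone $\Rightarrow f$ strongly convex,'' completing the proof.
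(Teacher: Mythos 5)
Your proposal is correct in substance, but it is worth knowing that the paper does not actually prove this Fact at all: its entire ``proof'' is the citation \cite[Exercise~12.59]{RW}. Your quadratic-shift reduction is therefore strictly more than what the paper supplies, and it is the standard way the cited exercise is resolved: set $g:=f-\frac{\rho}{2}\|\cdot\|^{2}$, transport strong monotonicity of $\partial f$ into plain monotonicity of $\partial g$, prove the easy implication (convex $\Rightarrow$ monotone) by adding two subgradient inequalities, and outsource only the hard implication (monotone $\Rightarrow$ convex) to the classical literature. Since you in the end lean on the same reference as the paper for that core, the two routes are consistent; yours simply makes the modulus bookkeeping explicit.

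Two precision points, however. First, your assertion that the sum rule gives the \emph{equality} $\partial f(x)=\partial g(x)+\rho x$ at every $x\in\dom f$ is not justified at the stage where you invoke it: with the paper's global convex subdifferential, only the inclusion $\partial g(x)+\rho x\subseteq\partial f(x)$ holds for arbitrary $g$. Indeed, for $g=-\|\cdot\|^{2}$ and $\rho=2$ one has $f\equiv 0$, so $\partial f(x)=\{0\}$ while $\partial g(x)=\emptyset$ for every $x$, and the equality fails. Fortunately your argument only ever needs the inclusion that is actually available in each direction: to get ``$\partial f$ strongly monotone $\Rightarrow\partial g$ monotone'' you lift subgradients $\tilde u\in\partial g(x)$ to $\tilde u+\rho x\in\partial f(x)$ (the always-valid inclusion), whereas the reverse transport is used only after $g$ is already known to be convex, where the Moreau--Rockafellar sum rule applies because $\frac{\rho}{2}\|\cdot\|^{2}$ is finite and continuous. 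You should state this asymmetry instead of a two-sided identity. Second, your one-dimensional sketch of ``$\partial g$ monotone $\Rightarrow g$ convex'' cannot work verbatim for the paper's convex subdifferential: the same example $g=-\|\cdot\|^{2}$ has $\partial g(x)=\emptyset$ everywhere, so monotonicity holds vacuously while $g$ is nonconvex; subgradients in this global sense need not exist densely along a segment. The implication is a genuine theorem only for proper lower semicontinuous functions with the regular or limiting subdifferential, which is the setting of \cite[Exercise~12.59]{RW} and which coincides with the convex subdifferential once convexity is established. Since you explicitly defer the technical core to that reference rather than to your sketch, the proof stands, but the sketch as written should not be presented as a self-contained alternative.
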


The \emph{convex subdifferential} $\partial f (\bar x)$  of a function $f$ at $\bar x \in \mathbb{R}^m$ is defined at any point $\bar x \in \dom f$ by
$$
\partial f(\bar x)=\left\lbrace u \in \mathbb{R}^m \mid f(x)-f(\bar x) \geq \left\langle u,x-\bar x\right\rangle, \forall x \in \mathbb{R}^m  \right\rbrace,
$$
and is empty otherwise.

When dealing with nonconvex and nonsmooth functions, we have to consider subdifferentials
more general than the convex one.
%
%
%
One of the most widely used constructions is the Clarke subdifferential, which can be defined in several (equivalent) ways (see, e.g.,~\cite{Clarke}).
For a given locally Lipschitz continuous function $f:\mathbb{R}^{m}\to\mathbb{R} \cup \{+\infty\}$,
the \emph{Clarke subdifferential} of $f$ at $\bar{x}$   is given by
$$
\partial_C f(\bar{x}) =\text{co} \left\{\lim_{x\to\bar x,\, x\not\in \Omega_f}\nabla f(x)\right\},
$$
where $\rm{co}$ stands for the convex hull and $\Omega_f$ denotes the set of Lebesgue measure zero (by Rademacher's Theorem) where $f$ fails to be differentiable. When $f$ is also convex on a neighborhood of $\bar x$, then $\partial_C f(\bar{x})=\partial f(\bar{x})$ (see \cite[Proposition~2.2.7]{Clarke}).
%

Clarke subgradients are generalizations of the
usual gradient of smooth functions. Indeed, if $f$ is strictly differentiable at $x$, 
we have
$$
\partial_C f(x)=\left\lbrace \nabla f(x)\right\rbrace,
$$
see~\cite[Proposition~2.2.4]{Clarke}.
However, it should be noted that if $f$ is only Fr\'echet differentiable at $x$,
 then $\partial_C f(x)$ can contain points other than $\nabla f(x)$ (see, e.g.~\cite[Example~2.2.3]{Clarke}).

The next basic formulas facilitate the calculation of the Clarke subdifferential.


\begin{fact}[Basic calculus]\label{Clarkesumrule} The following assertions hold:
\begin{enumerate}
\item For any scalar $s$, one has
$$
\partial_C(sf)(x)= s\partial_C f(x).
$$
\item $\partial_C (f+g)(x)\subset\partial_C f(x)+\partial_C g(x)$, and equality holds if either $f$ or $g$ is strictly differentiable.
\end{enumerate}
\end{fact}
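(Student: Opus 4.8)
The plan is to reduce both assertions to identities for the Clarke generalized directional derivative
$$f^\circ(x;d) := \limsup_{y\to x,\,t\downarrow 0}\frac{f(y+td)-f(y)}{t},$$
and then translate them back to subdifferentials via support functions. The key property I would invoke (equivalent to the gradient-limit definition above; see \cite{Clarke}) is that, for a locally Lipschitz $f$, the set $\partial_C f(x)$ is nonempty, convex and compact, and that $f^\circ(x;\cdot)$ is exactly its support function, $f^\circ(x;d)=\max_{\xi\in\partial_C f(x)}\langle\xi,d\rangle$. Since a nonempty convex compact set is uniquely determined by its support function, and since the support-function correspondence is order preserving (a pointwise-smaller support function corresponds to a smaller set) and additive under Minkowski sums, every set-level claim becomes a claim about these sublinear functions, which are far easier to manipulate.

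For assertion (i), I would first treat $s\ge 0$, where the homogeneity of the $\limsup$ gives $(sf)^\circ(x;d)=s\,f^\circ(x;d)$ directly; as $s\,\partial_C f(x)$ has support function $s\,f^\circ(x;\cdot)$, the two sets coincide. The negative case reduces to $s=-1$, for which a change of variable $z=y+td$ in the defining $\limsup$ yields $(-f)^\circ(x;d)=f^\circ(x;-d)$; since $\xi\mapsto-\xi$ sends $\partial_C f(x)$ to a set with support function $f^\circ(x;-\cdot)$, this gives $\partial_C(-f)(x)=-\partial_C f(x)$. Composing the two cases covers every scalar $s$.

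For the inclusion in assertion (ii), I would use the subadditivity of the $\limsup$ to obtain $(f+g)^\circ(x;d)\le f^\circ(x;d)+g^\circ(x;d)$ for every $d$; the right-hand side is the support function of $\partial_C f(x)+\partial_C g(x)$, so the smaller support function $(f+g)^\circ(x;\cdot)$ forces $\partial_C(f+g)(x)\subset\partial_C f(x)+\partial_C g(x)$. For equality when, say, $g$ is strictly differentiable, the decisive point is that strict differentiability makes $g^\circ(x;d)=\langle\nabla g(x),d\rangle$ linear in $d$, so that $(-g)^\circ(x;d)=-g^\circ(x;d)$. Applying the inclusion just proved to the splitting $f=(f+g)+(-g)$ then gives $f^\circ(x;d)\le(f+g)^\circ(x;d)-g^\circ(x;d)$, i.e. the reverse inequality $f^\circ(x;d)+g^\circ(x;d)\le(f+g)^\circ(x;d)$. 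Hence the two support functions agree and the sets are equal.

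The part I expect to require the most care is the equality case in (ii): it hinges on the linearity of $g^\circ(x;\cdot)$, and this is precisely where strict differentiability cannot be weakened to mere Fr\'echet differentiability, consistent with the cautionary \cite[Example~2.2.3]{Clarke} mentioned above, in which $\partial_C f(x)$ strictly contains $\{\nabla f(x)\}$ and $f^\circ$ fails to be linear. The only other ingredient one would have to justify, were it not quoted from \cite{Clarke} outright, is the support-function characterization of $\partial_C f(x)$ from the gradient-limit definition; granting that, everything else is a routine translation between sublinear functions and convex compact sets.
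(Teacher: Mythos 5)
Your proof is correct and is essentially the same as the paper's, since the paper offers no argument of its own but simply cites \cite[Propositions~2.3.1 and 2.3.3]{Clarke} and \cite[Corollary~1, p.~39]{Clarke}, whose proofs are exactly your support-function argument: sublinearity and positive homogeneity of the $\limsup$, the change of variable giving $(-f)^\circ(x;d)=f^\circ(x;-d)$, and the splitting $f=(f+g)+(-g)$ with $g^\circ(x;\cdot)$ linear under strict differentiability. The one ingredient you rightly flag as assumed --- that the paper's gradient-limit definition of $\partial_C f(x)$ yields a nonempty convex compact set whose support function is $f^\circ(x;\cdot)$ --- is the content of \cite[Theorem~2.5.1]{Clarke}, so citing it closes the only gap.
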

\begin{proof}
See~\cite[Propositions 2.3.1 and 2.3.3]{Clarke}. For the last assertion, see~\cite[Corollary~1, p.~39]{Clarke}.
\end{proof}

\subsection{Assumptions}
Throughout this paper, the following two assumptions are made.

\paragraph{Assumption 1} Both functions $g$ and $h$ are strongly convex with modulus $\rho >0 $.

\paragraph{Assumption 2} The function $h$ is subdifferentiable at every point in $\dom  h$; i.e., $\partial h(x)\neq\emptyset$ for all $x\in\dom  h$. The function $g$ is continuously differentiable on an open set containing $\dom h$  and
\begin{equation}
\inf_{x\in\mathbb{R}^{m}}\phi(x)>-\infty.\label{eq:phi_bounded_below}
\end{equation}


Under these assumptions, the next necessary optimality condition holds.

\begin{fact}[First-order necessary optimality condition]
If $x^* \in \dom \phi$ is an optimal solution of problem $\left(\mathcal{P}\right)$ in~\eqref{eq:DC_2norm}, then
\begin{equation} \label{stationaryPoint}
\partial h(x^*) = \left\lbrace \nabla g(x^*)
 \right\rbrace.
\end{equation}
\end{fact}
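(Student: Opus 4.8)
The plan is to establish the two inclusions $\partial h(x^*)\subseteq\{\nabla g(x^*)\}$ and $\{\nabla g(x^*)\}\subseteq\partial h(x^*)$ separately: the first will come from global optimality combined with the subgradient inequality for $h$, while the second is essentially automatic once we know that $\partial h(x^*)$ is a \emph{nonempty} subset of a singleton.

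First I would pin down where $x^*$ lives and which regularity is available there. Since $x^*$ is an optimal solution and $\inf_x\phi(x)>-\infty$ by Assumption~2, the value $\phi(x^*)=g(x^*)-h(x^*)$ is finite; with the arithmetic conventions fixed in~\eqref{eq:DC_2norm} this forces both $g(x^*)$ and $h(x^*)$ to be finite, so in particular $x^*\in\dom h$. By Assumption~2, $g$ is continuously differentiable on an open set containing $\dom h$, hence differentiable at $x^*$, so $\nabla g(x^*)$ exists and $\partial g(x^*)=\{\nabla g(x^*)\}$. Moreover, again by Assumption~2, $\partial h(x^*)\neq\emptyset$.

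For the key inclusion, take any $u\in\partial h(x^*)$. Global optimality of $x^*$ gives $g(x)-h(x)\ge g(x^*)-h(x^*)$ for every $x\in\R^m$, that is, $g(x)-g(x^*)\ge h(x)-h(x^*)$. The subgradient inequality for the convex function $h$ yields $h(x)-h(x^*)\ge\langle u,x-x^*\rangle$. Chaining these two bounds, $g(x)-g(x^*)\ge\langle u,x-x^*\rangle$ holds for all $x$, which is precisely the statement $u\in\partial g(x^*)$ by the definition of the convex subdifferential. Since $g$ is differentiable at $x^*$, this means $u=\nabla g(x^*)$; as $u\in\partial h(x^*)$ was arbitrary, we conclude $\partial h(x^*)\subseteq\{\nabla g(x^*)\}$.

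Finally, combining $\partial h(x^*)\subseteq\{\nabla g(x^*)\}$ with $\partial h(x^*)\neq\emptyset$ forces the equality $\partial h(x^*)=\{\nabla g(x^*)\}$, which is the claim. The only genuinely delicate point is the bookkeeping in the first step: one must confirm the finiteness of $\phi(x^*)$ and hence the membership $x^*\in\dom h$, so that both the differentiability of $g$ and the nonemptiness of $\partial h$ at $x^*$ are guaranteed; the chain of inequalities itself is routine. I note that strong convexity (Assumption~1) plays no role in this necessary condition — only convexity together with the differentiability/subdifferentiability hypotheses of Assumption~2 are used.
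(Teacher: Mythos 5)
Your proof is correct. Note, however, that the paper does not prove this fact at all: it simply cites \cite[Theorem~3']{Toland79}, so what you have done is reconstruct, in the paper's specific setting, the standard argument behind Toland's general optimality condition $\partial h(x^*)\subseteq\partial g(x^*)$ for global minimizers of DC functions. Your route is the natural one: optimality gives $g(x)-g(x^*)\ge h(x)-h(x^*)$, the subgradient inequality for $h$ then transports any $u\in\partial h(x^*)$ into $\partial g(x^*)$, differentiability of $g$ at $x^*$ collapses $\partial g(x^*)$ to $\{\nabla g(x^*)\}$, and the nonemptiness of $\partial h(x^*)$ guaranteed by Assumption~2 upgrades the inclusion to the stated equality. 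Two things your write-up buys that the bare citation does not: first, the bookkeeping showing $x^*\in\dom h$ (finiteness of $\phi(x^*)$ under the paper's extended-arithmetic conventions, using $\inf\phi>-\infty$), which is exactly what licenses both the differentiability of $g$ at $x^*$ and $\partial h(x^*)\neq\emptyset$; second, the explicit observation that strong convexity (Assumption~1) is never used, so the necessary condition holds under convexity alone. One micro-point worth making explicit if you polish this: for $x\in\dom g\setminus\dom h$ the rearranged inequality involves $+\infty$, but such points cannot exist under \eqref{eq:phi_bounded_below} (they would give $\phi(x)=-\infty$), and for $x\notin\dom g$ the subgradient inequality for $g$ is trivially satisfied — your chain is then airtight for all $x\in\R^m$.
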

\begin{proof}
See {\cite[Theorem~3']{Toland79}}.
\end{proof}
\bigskip

Any point satisfying condition \eqref{stationaryPoint} is called a {\em stationary point} of $\left(\mathcal{P}\right)$.
One says that $ \bar{x}$ is a {\em critical point} of $\left(\mathcal{P}\right)$ if
$$
\nabla g(\bar{x}) \in \partial h(\bar{x}).
$$
It is obvious that every stationary point $x^*$
is a critical point, but the converse is not true in general.

\begin{example}\label{ex:function}
Consider the DC function $\phi:\mathbb{R}^m\to\mathbb{R}$ defined for $x\in\mathbb{R}^m$ by
$$\phi(x):=\|x\|^2+\sum_{i=1}^mx_i-\sum_{i=1}^m|x_i|.$$
It is not difficult to check that $\phi$ has $2^m$ critical points, namely, any $x\in\{-1,0\}^m$, and only one stationary point $x^*:=(-1,-1,\ldots,-1)$, which is the global minimum of $\phi$.\qede
\end{example}

\section{DCA and BDCA\label{sec:DCA}}


The key idea of the DCA to solve problem~$\left(\mathcal{P}\right)$ in~\eqref{eq:DC_2norm} is to
approximate the concave part $-h$ of the objective function~$\phi$
by its affine majorization, and then minimize the resulting convex
function. The algorithm proceeds as follows.\medskip
%
\begin{center}
\bgroup 	\renewcommand\theenumi{\arabic{enumi}.} 	 \renewcommand\labelenumi{\theenumi}
\fbox{%
\begin{minipage}[t]{.98\textwidth}%
\textbf{DCA} (\emph{DC Algorithm}) \cite{an_numerical_1996}
\begin{enumerate}
\item Let $x_{0}$ be any initial point and set $k:=0$.
\item Select $u_k \in \partial h(x_k)$ and solve the strongly convex optimization problem
\[
\left(\mathcal{P}_{k}\right)\ \underset{x\in\mathbb{R}^{m}}{\textrm{minimize}}\; g(x)-\langle u_k,x\rangle
\]
 to obtain its unique solution $y_{k}$.
\item If $y_{k}=x_{k}$ then STOP and RETURN $x_{k}$, otherwise set $x_{k+1}:=y_{k}$,
set $k:=k+1$, and go to Step~2.\medskip{}
 \end{enumerate}
\end{minipage}} \egroup{}
\end{center}\medskip


Let us introduce the algorithm we propose for solving problem $(\mathcal{P})$, which
we call \emph{BDCA} (\emph{Boosted DC Algorithm}).
The algorithm is a nonsmooth version of the one proposed in \cite{BDCA2018}, except for a small but relevant modification in Step 4, where now we give total freedom to the initial value for the backtracking line search used for finding an appropriate value of the step size $\lambda_k$. In \cref{sec:numa}, we demonstrate that this seemingly minor change permits \emph{smarter choices} of the initial value than simply using a constant value $\overline{\lambda}$. We have also replaced $\lambda_k$ in the right-hand side of the line search inequality by~$\lambda_k^2$, which allows us to remove the inconvenient assumption $\rho > \alpha$ (see \cite[Remark 3]{BDCA2018} for more details).\medskip
\begin{center}
\bgroup 	\renewcommand\theenumi{\arabic{enumi}.} 	 \renewcommand\labelenumi{\theenumi}
\fbox{%
\begin{minipage}[t]{.98\textwidth}%
\textbf{BDCA} (\emph{Boosted DC Algorithm})
\begin{enumerate}
\item Fix $\alpha>0$ and $0<\beta<1$. Let $x_{0}$
be any initial point and set $k:=0$.
\item Select $u_k \in \partial h(x_k)$ and solve the strongly convex optimization problem
\[
\left(\mathcal{P}_{k}\right)\ \underset{x\in\mathbb{R}^{m}}{\textrm{minimize}}\; g(x)-\langle u_k,x\rangle
\]
 to obtain its unique solution $y_{k}$.
\item Set $d_{k}:=y_{k}-x_{k}$. If $d_{k}=0$, STOP and RETURN $x_{k}$.
Otherwise, go to Step~4.
\item Choose any $\overline{\lambda}_k\geq 0$. Set $\lambda_k:=\overline{\lambda}_k$. \\WHILE $\phi(y_{k}+\lambda_{k}d_{k})>\phi(y_{k})-\alpha\lambda_{k}^2\|d_{k}\|^{2}$
DO $\lambda_{k}:=\beta\lambda_{k}$.
\item Set $x_{k+1}:=y_{k}+\lambda_{k}d_{k}$, set $k:=k+1$, and go to Step~2.\medskip{}
 \end{enumerate}
\end{minipage}}
\egroup{}
\end{center}\medskip

Observe that if one sets $\overline{\lambda}_{k}=0$, the iterations of the BDCA and the DCA coincide. Hence, our convergence results for the BDCA apply in particular to the DCA. In the following proposition we show that $d_{k}:=y_{k}-x_{k}$ is a descent direction for $\phi$ at $y_{k}$. Since the value of $\phi$ is always reduced at $y_k$ with respect to that at $x_k$, one can achieve a larger decrease by moving along the direction $d_k$. This simple fact, which is the key idea of the BDCA, improves the performance of the DCA in many applications (see \cref{sec:numa}).

\begin{proposition}\label{prop:main_inequality}
For all $k\in\mathbb{N}$, the following holds:
\begin{itemize}
\item [(i)] $\phi(y_{k})\leq\phi(x_{k})-\rho\|d_{k}\|^{2}$;
\item[(ii)] $\phi'(y_k;d_k) \leq-\rho\|d_{k}\|^{2}$;
\item[(iii)] there is some $\delta_k > 0$ such that
\begin{equation*}
\phi\left(y_{k}+\lambda d_{k}\right)\leq\phi(y_{k})- \alpha \lambda^2 \|d_{k}\|^{2},\quad\text{for all }\lambda\in[0,\delta_k],\label{eq:main_inequality_bdca}
\end{equation*}
so the backtracking Step 4 of BDCA terminates finitely.
\end{itemize}
\end{proposition}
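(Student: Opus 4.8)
The plan is to establish the three items in order, extracting everything from two ingredients: the optimality of $y_k$ as the unique minimizer of the strongly convex problem $(\mathcal{P}_k)$, and the strong convexity of both $g$ and $h$.

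For (i), I would first record the first-order optimality condition for $(\mathcal{P}_k)$, namely $\nabla g(y_k)=u_k$, and observe that the objective $x\mapsto g(x)-\langle u_k,x\rangle$ is strongly convex with modulus $\rho$. Since $y_k$ is its minimizer, evaluating the strong-convexity bound at $x_k$ gives $g(x_k)-\langle u_k,x_k\rangle\geq g(y_k)-\langle u_k,y_k\rangle+\frac{\rho}{2}\|d_k\|^2$, i.e. $g(y_k)-g(x_k)\leq\langle u_k,d_k\rangle-\frac{\rho}{2}\|d_k\|^2$. Next, since $u_k\in\partial h(x_k)$ and $h$ is strongly convex with modulus $\rho$, the strong subgradient inequality evaluated at $y_k$ yields $h(x_k)-h(y_k)\leq-\langle u_k,d_k\rangle-\frac{\rho}{2}\|d_k\|^2$. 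Adding the two and using $\phi=g-h$ gives (i). I would also note that (i) forces $\phi(y_k)<+\infty$, hence $y_k\in\dom h$, so $\partial h(y_k)\neq\emptyset$ by Assumption~2, a fact needed below.

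For (ii), since $g$ is differentiable I would split the directional derivative as $\phi'(y_k;d_k)=\langle\nabla g(y_k),d_k\rangle-h'(y_k;d_k)=\langle u_k,d_k\rangle-h'(y_k;d_k)$, using $\nabla g(y_k)=u_k$. The crux, and the place where the nonsmoothness of $h$ is handled, is a lower bound on $h'(y_k;d_k)$. Choosing any $v\in\partial h(y_k)$, the convex subgradient inequality gives $h'(y_k;d_k)\geq\langle v,d_k\rangle$, while the strong monotonicity of $\partial h$ (\cref{fact:strongly_convex}) applied to $v\in\partial h(y_k)$ and $u_k\in\partial h(x_k)$ gives $\langle v,d_k\rangle\geq\langle u_k,d_k\rangle+\rho\|d_k\|^2$. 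Substituting, the $\langle u_k,d_k\rangle$ terms cancel and (ii) follows. For (iii), I would set $\psi(\lambda):=\phi(y_k+\lambda d_k)$ and exploit that (ii) provides a strictly negative first-order slope $\psi'(0^+)=\phi'(y_k;d_k)\leq-\rho\|d_k\|^2<0$ (recall $d_k\neq 0$ in Step~4), whereas the penalty $\alpha\lambda^2\|d_k\|^2$ is of second order. Concretely, $\frac{\psi(\lambda)-\psi(0)}{\lambda}+\alpha\lambda\|d_k\|^2\to\phi'(y_k;d_k)\leq-\rho\|d_k\|^2<0$ as $\lambda\downarrow 0$, so this quantity is negative on some interval $(0,\delta_k]$, which rearranges to the claimed inequality $\phi(y_k+\lambda d_k)\leq\phi(y_k)-\alpha\lambda^2\|d_k\|^2$. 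Finite termination of Step~4 then follows because backtracking generates $\lambda_k=\beta^n\overline{\lambda}_k\to 0$, eventually entering $[0,\delta_k]$ and violating the WHILE condition.

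I expect the main obstacle to be not the algebra but the finiteness and domain bookkeeping underlying (ii) and (iii): one must guarantee that $\phi'(y_k;d_k)$ is finite and that $\psi(\lambda)$ is real-valued for small $\lambda>0$. Here I would argue that, because $g$ is finite (being differentiable) on an open set containing $\dom h$, the point $y_k+\lambda d_k$ lies in that set for small $\lambda$; were it outside $\dom h$, we would have $h(y_k+\lambda d_k)=+\infty$ and hence $\phi(y_k+\lambda d_k)=-\infty$, contradicting $\inf\phi>-\infty$. Thus $y_k+\lambda d_k\in\dom h$ for small $\lambda$, the one-sided derivative $h'(y_k;d_k)$ is finite, and all the limits invoked above are legitimate.
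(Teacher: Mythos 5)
Your proof is correct and follows essentially the same route as the paper's: for (ii) you pick $v\in\partial h(y_k)$, bound $h'(y_k;d_k)\geq\langle v,d_k\rangle$ by convexity, and invoke the strong monotonicity of $\partial h$ from \cref{fact:strongly_convex} exactly as the paper does, while your limit argument for (iii) (folding $\alpha\lambda\|d_k\|^2$ into the difference quotient) is just a compressed form of the paper's choice $\delta_k=\min\bigl\{\widetilde{\lambda}_k,\tfrac{\rho}{2\alpha}\bigr\}$. Your explicit proof of (i) matches the argument the paper omits by citing \cite{BDCA2018}, and your finiteness and domain bookkeeping (that $y_k\in\dom h$, so $\partial h(y_k)\neq\emptyset$ and $\phi(y_k+\lambda d_k)$ is real for small $\lambda$) is a welcome addition that the paper leaves implicit.
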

\begin{proof}
The proof of~(i) is similar to the one of~\cite[Proposition~3]{BDCA2018} and is therefore omitted.
To prove~(ii), pick any $v\in\partial h(y_k)$. Note that the one-sided directional derivative $\phi'(y_k;d_k)$ is
given by
\begin{align}\label{directionalDerivative}
\phi'(y_k;d_k) \nonumber
&= \lim_{t \downarrow 0} \frac{\phi(y_k+ t d_k)-\phi(y_k)}{t}\\\nonumber
&= \lim_{t \downarrow 0} \frac{g(y_k+ t d_k)-g(y_k)}{t} - \lim_{t \downarrow 0} \frac{h(y_k+ t d_k)-h(y_k)}{t}\\
&\leq \left\langle \nabla g(y_k), d_k \right\rangle  - \left\langle v, d_k \right\rangle,
\end{align}
by convexity of $h$.
Since $y_{k}$ is the unique solution of the strongly convex
problem $\left(\mathcal{P}_{k}\right)$, we have
\[
\nabla g(y_{k})=u_k \in \partial h(x_k).
\]
The function $h$ is strongly convex with constant $\rho$.
This implies, by \cref{fact:strongly_convex}, that $\partial h$ is strongly monotone with constant $\rho$.
Therefore, since $v \in \partial h(y_k)$, it holds
\[
\langle u_k-v,x_{k}-y_{k}\rangle\geq\rho\|x_{k}-y_{k}\|^{\text{2}}.
\]
Hence
\[
\langle\nabla g(y_{k})-v,d_{k}\rangle =\langle u_k-v,y_{k}-x_k\rangle\leq-\rho\|d_{k}\|^{\text{2}},
\]
and the proof follows by combining the last inequality with \eqref{directionalDerivative}.

Finally, to prove (iii), if $d_{k}=0$ there is nothing to prove. Otherwise,  we have
$$
\lim_{ \lambda \downarrow 0} \frac{\phi(y_k+ \lambda d_k)-\phi(y_k)}{ \lambda}
= \phi'(y_k;d_k) \leq -\rho\|d_{k}\|^{2} < - \frac{\rho}{2} \|d_{k}\|^{2}<0.
$$
Hence, there is some $\widetilde{\lambda}_k > 0$ such that
$$
\frac{\phi(y_k+ \lambda d_k)-\phi(y_k)}{ \lambda}
\leq -\frac{\rho}{2} \|d_{k}\|^{2}, \quad \forall \lambda \in \left]0,\widetilde{\lambda}_k \right];
$$
that is
$$
\phi(y_k+ \lambda d_k) \leq \phi(y_k)-  \frac{\rho \lambda}{2} \|d_{k}\|^{2}, \quad \forall \lambda \in \left]0,\widetilde{\lambda}_k\right].
$$
Setting $\delta_k := \min\left\lbrace \widetilde{\lambda}_k, \frac{\rho}{2 \alpha}\right\rbrace$, we obtain
$$
\phi(y_k+ \lambda d_k) \leq \phi(y_k)-  \alpha \lambda^2 \|d_{k}\|^{2},\quad  \forall \lambda\in{]0,\delta_k]},
$$
which completes the proof.
\end{proof}

\begin{remark} (i) When the function $h$ is differentiable,
BDCA uses the same direction as the Mine--Fukushima algorithm~\cite{mine_minimization_1981}, since
$y_{k}+\lambda d_{k}=x_{k}+(1+\lambda)d_{k}$. The algorithm they propose is computationally undesirable in the sense that it uses an exact line search. This was later fixed in the Fukushima--Mine algorithm~\cite{fukushima_generalized_1981} by considering an Armijo type rule for choosing the step size
$$
x_{k+1}=x_{k}+\beta^{l}d_{k}=\beta^{l}y_{k}+\left(1-\beta^{l}\right)x_{k}
$$
for some $0<\beta<1$ and some nonnegative integer $l$. Since $0<\beta<1$, the step size $\lambda=\beta^{l}-1$ chosen by the Fukushima--Mine
algorithm~\cite{fukushima_generalized_1981} is always less than or
equal to zero, while in BDCA, only step sizes $\lambda\in{\big]0,\overline{\lambda}_k\big]}$ are explored. Also, the Armijo rule differs, as BDCA searches for some $\lambda_{k}$ such that
$\phi(y_{k}+\lambda_{k}d_{k})\leq\phi(y_{k})-\alpha\lambda_k^2\|d_k\|^2$, while the Fukushima--Mine algorithm requires $\phi(x_{k}+\beta^l d_{k})\leq\phi(x_{k})-\alpha\beta^l\|d_k\|^2$.\\
(ii) We know from~\cref{prop:main_inequality}
that
\[
\phi\left(y_{k}+\lambda d_{k}\right)\leq\phi(y_{k})-\alpha\lambda^2\|d_{k}\|^{2}\leq\phi(x_{k})-\left(\rho+\alpha\lambda^2\right)\|d_{k}\|^{2};
\]
thus, BDCA results in a larger decrease in the value of $\phi$
at each iteration than DCA.
As a result, we can expect BDCA to converge faster than DCA.
\end{remark}

\begin{example}[\cref{ex:function} revisited]\label{ex:1}
Consider again the function defined in \cref{ex:function} for $m=2$. The function $\phi$ can be expressed as a DC function of type~\eqref{eq:DC_2norm} with strongly convex terms by taking, for instance,
$$g(x,y)=\frac{3}{2}\left(x^2+y^2\right)+x+y\quad\text{and}\quad h(x,y)=|x|+|y|+\frac{1}{2}\left(x^2+y^2\right).$$
In \cref{fig:ex1a} we show the iterations generated by DCA and BDCA from the same starting point $(x_0,y_0)=(1,0)$, with $\alpha=0.1$, $\beta=0.5$ and $\overline{\lambda}_k=1$ for all $k$. Not only BDCA obtains a larger decrease than DCA in the value of $\phi$ at each iteration, but also the line search helps the sequence generated escape from the stationary point $(0,-1)$, which is not even a local minimum. As the function $h$ is not differentiable at $(x_0,y_0)$, there is freedom in the choice of the point in $\partial h(x_0,y_0)=\{2\}\times[-1,1]$ (we took the point $(2,0)$). In \cref{fig:ex1b} we plot the value of the function in the line search procedure of BDCA at the first iteration. The value $\lambda=0$ corresponds to the next iteration chosen by DCA, while BDCA choses $\lambda>0$, which permits to achieve an additional decrease in the value of $\phi$.

\begin{figure}[ht!]\centering
\subfigure[Iterations generated by DCA and BDCA\label{fig:ex1a}]{\includegraphics[width=0.49\textwidth]{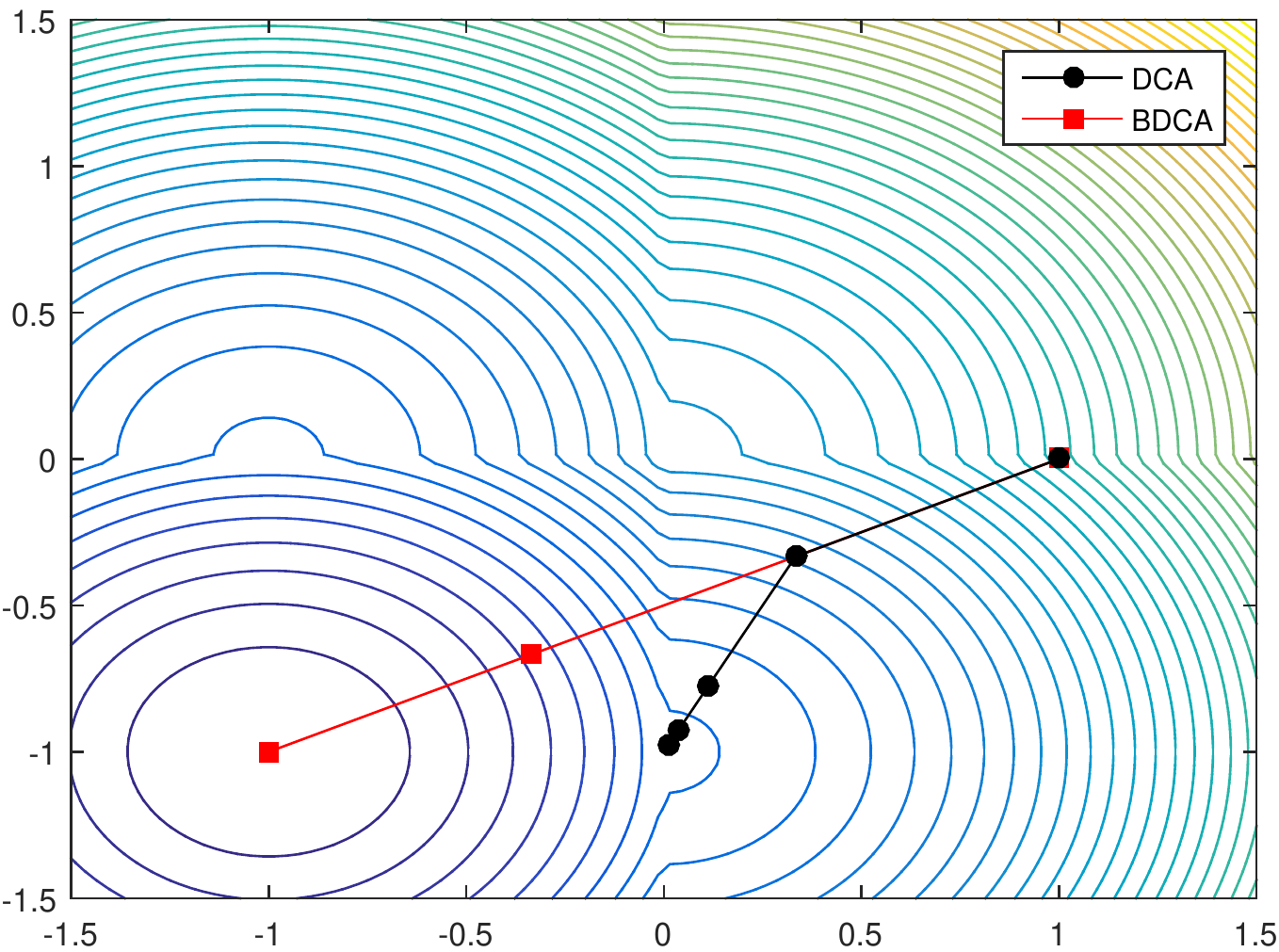}}
\subfigure[Line search of BDCA at the starting point\label{fig:ex1b}]{\includegraphics[width=0.49\textwidth]{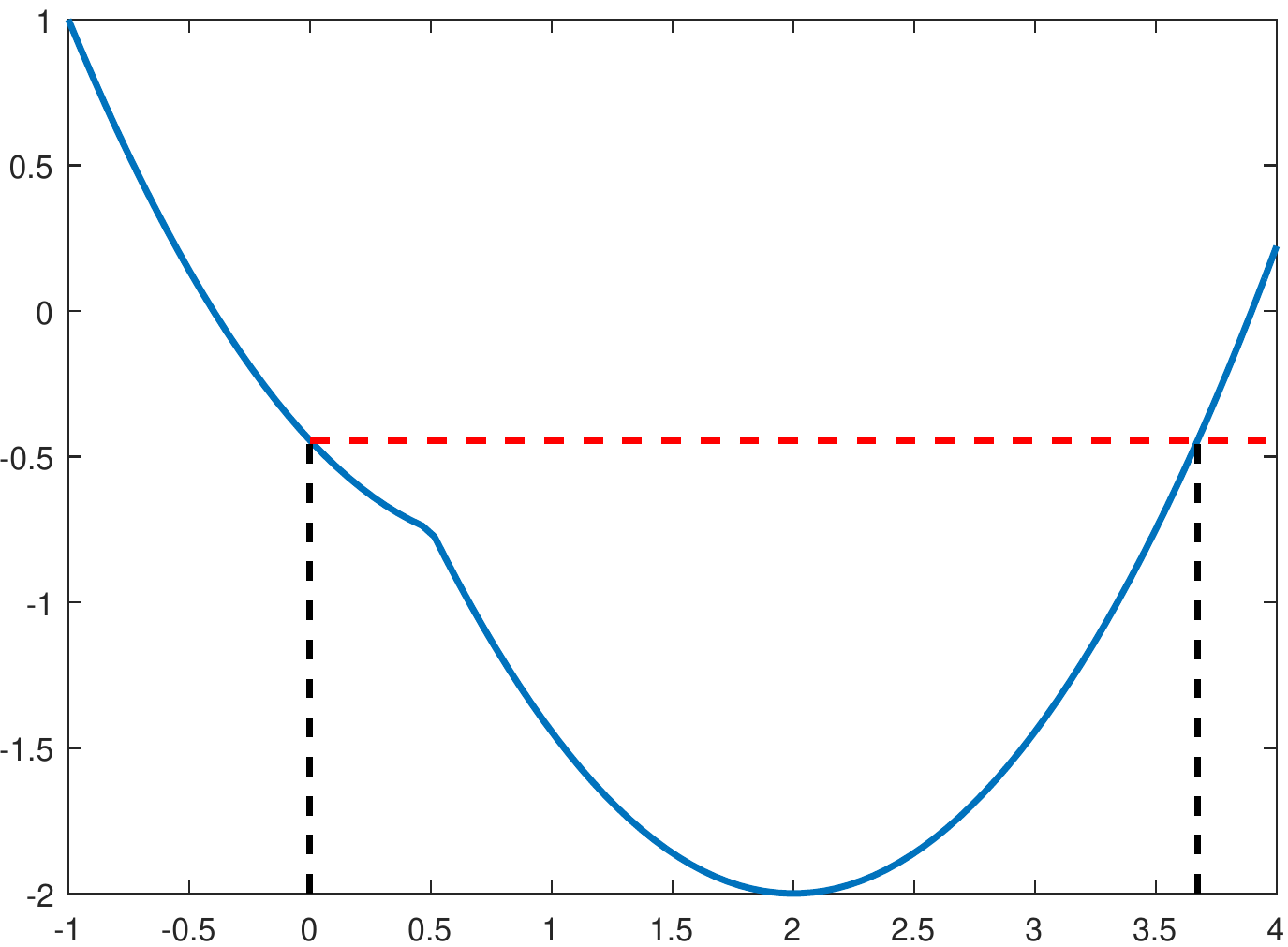}}
\caption{Illustration of~\cref{ex:1}}
\end{figure}
\begin{table}[ht!]\centering
\begin{tabular}{|c|c|c|c|c|}
\cline{2-5}
\multicolumn{1}{c|}{}
& $(-1,-1)$ & $(-1,0)$ & $(0,-1)$ & $(0,0)$\\
\hline
DCA & 249,763    &  249,841    &  250,204     & 250,192\\
BDCA & 996,104      &  1,922      &  1,974        &   0\\
\hline
\end{tabular}\caption{For one million random starting points in $[-1.5,1.5]^2$, we count the sequences generated by DCA and BDCA converging to each of the four stationary points}\label{tbl:example}
\end{table}
To demonstrate that, indeed, the line search procedure of BDCA helps the iterations escape from stationary points that are not critical points, we show in \cref{tbl:example} the results of running both algorithms for one million random starting points. For only 25\% of the starting points, DCA finds the optimal solution, while BDCA finds it in 99.6\% of the instances.\qede
\end{example}

The next example complements the one given in~\cite[Remark~1]{BDCA2018}. It shows that the direction used by BDCA can be an ascent direction at $y_k$ even when this point is not the global minimum of $\phi$. Thus, \cref{prop:main_inequality} does not remain valid when $g$ is not differentiable, and the scheme cannot be further extended.

\begin{example}[Failure of BDCA when $g$ is not differentiable]\label{ex:failure}
Consider now the following modification of the previous example
$$g(x,y)=-\frac{5}{2}x+x^2+y^2+|x|+|y|\quad\text{and}\quad h(x,y)=\frac{1}{2}\left(x^2+y^2\right),$$
so that now $h$ is differentiable but $g$ is not.
Let $(x_0,y_0)=\left(\frac{1}{2},1\right)$. Then, the next point generated by DCA is $(x_1,y_1)=(1,0)$ and $d_0:=(x_1,y_1)-(x_0,y_0)=\left(\frac{1}{2},-1\right)$ is not a descent direction for $\phi$ at $(x_1,y_1)$. Indeed, one can easily check that
$$\phi'((x_1,y_1);d_0)=\lim_{t\downarrow 0}\frac{\phi\left((1,0)+t\left(\frac{1}{2},-1\right)\right)-\phi(1,0)}{t}=\frac{3}{4},$$
see \cref{fig:failure}. Actually, it holds that
$$\phi\left((x_1,y_1)+td_0\right)-\phi(x_1,y_1)=\frac{5t^2}{8} + \frac{3t}{4},$$
so $\phi\left((x_1,y_1)+td_0\right)>\phi(x_1,y_1)$ for all $t>0$.
\begin{figure}[ht!]\centering
\subfigure[Iterations generated by DCA and search direction of BDCA at $(1,0)$\label{fig:ex2a}]{\includegraphics[width=0.49\textwidth]{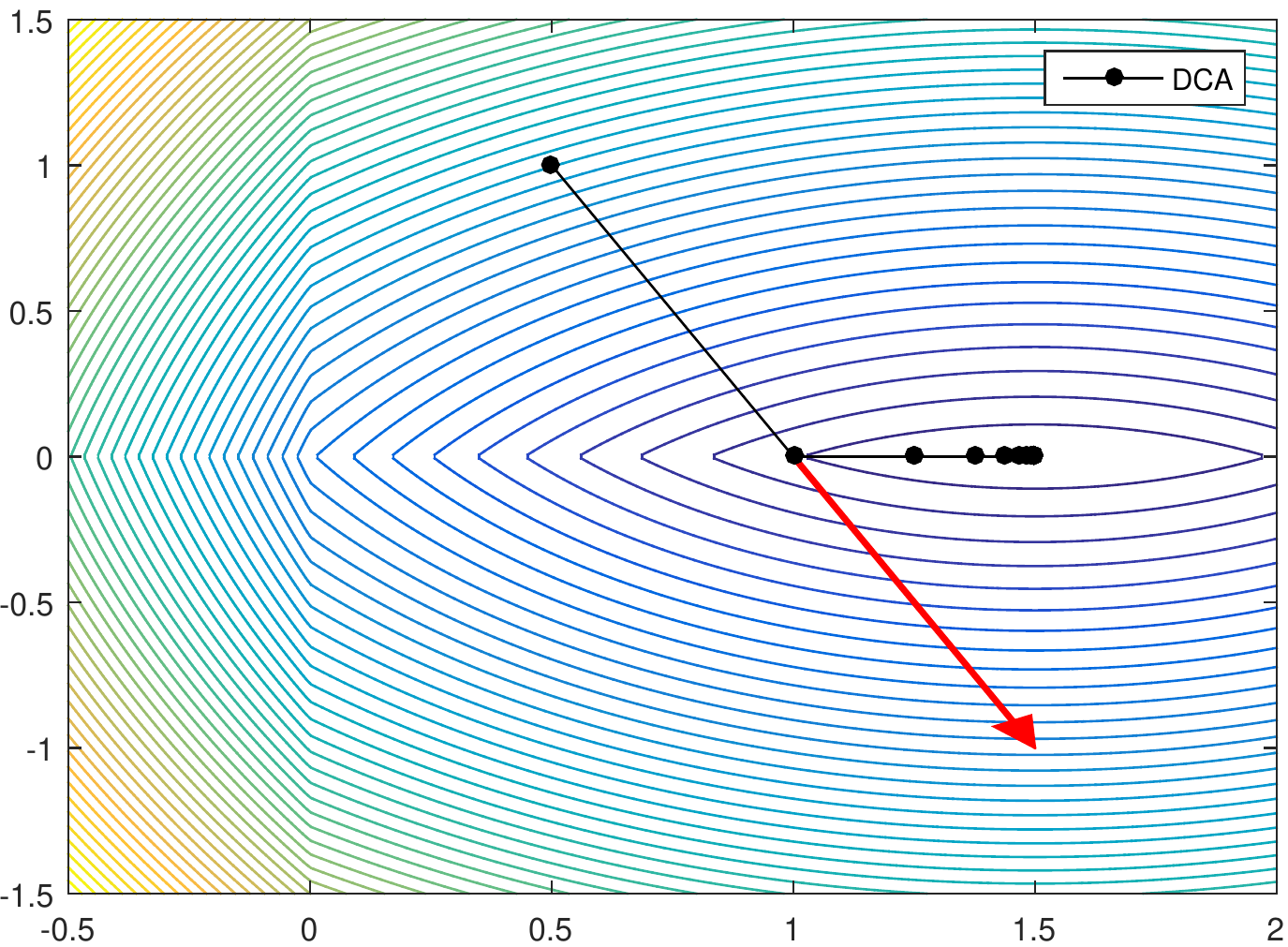}}
\subfigure[Line search of BDCA at the point (1,0)\label{fig:ex2b}]{\includegraphics[width=0.49\textwidth]{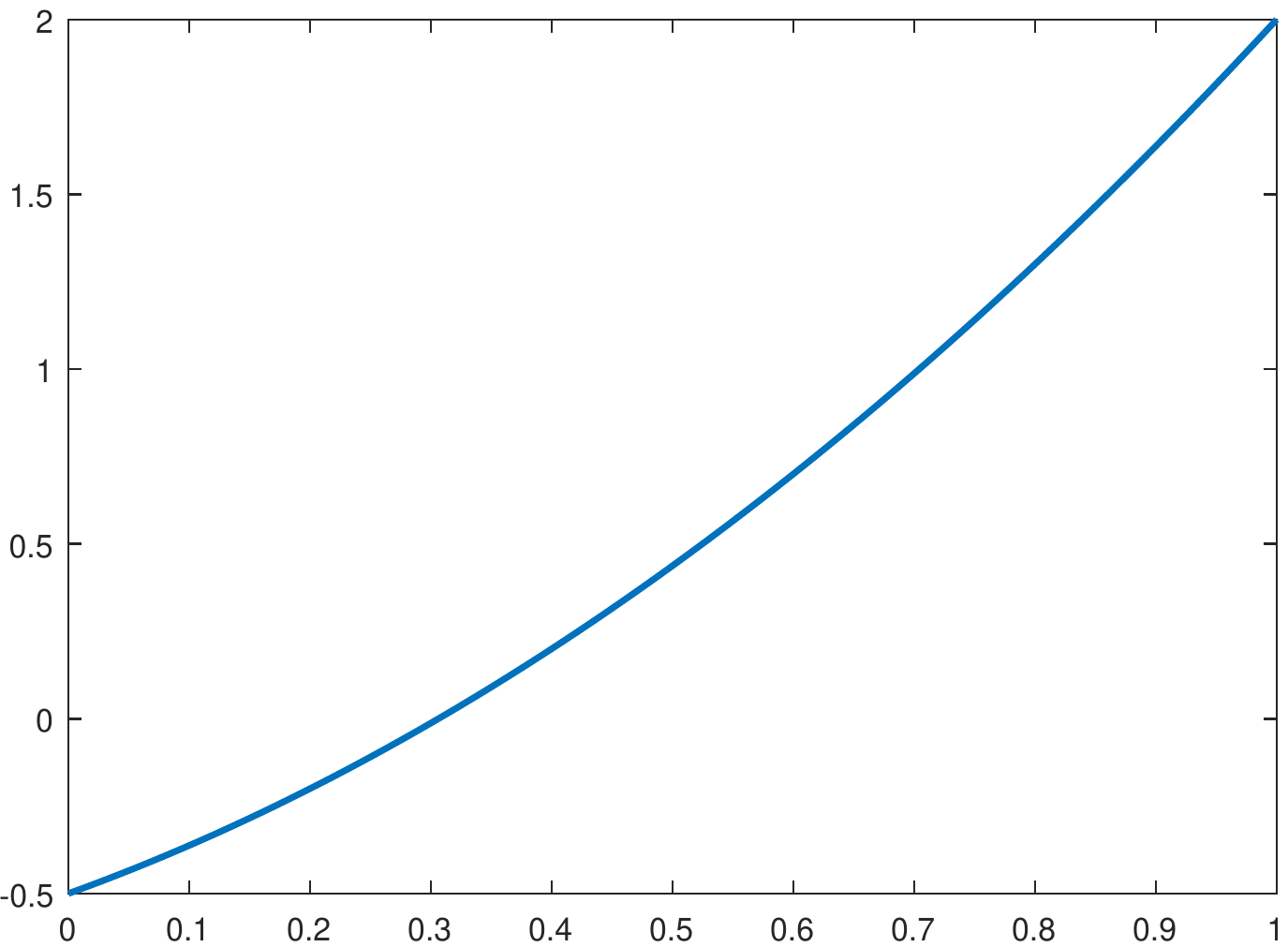}}
\caption{Illustration of~\cref{ex:failure}\label{fig:failure}}
\end{figure}

In contrast with the example in~\cite[Remark~1]{BDCA2018}, observe that here $(x_1,y_1)$ is not the global minimum of $\phi$. In fact, the iterates generated by DCA converge to the global minimum of $\phi$, as shown in \cref{fig:ex2a}.\qede
\end{example}

As proved next, the failure of BDCA shown in \cref{ex:failure} can only occur for~$n\geq 2$.
\begin{proposition}
Let $\phi=g-h$, where $g:\mathbb{R}\to\mathbb{R}$ and $h:\mathbb{R}\to\mathbb{R}$ are convex and $h$ is differentiable. If $h'(x)\in\partial g(y)$ and $0\not\in\partial_C \phi(y)$, then $\phi'(y;y-x)<0$.
\end{proposition}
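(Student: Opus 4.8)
The plan is to exploit the fact that in dimension one all the relevant subdifferentials are intervals, which rigidly links the sign of the direction $d:=y-x$ to the position of $0$ relative to $\partial_C\phi(y)$. First I would record the exact subdifferential formula for $\phi$ at $y$. Since $h$ is convex and differentiable on $\mathbb{R}$, its derivative $h'$ is continuous and nondecreasing, so $h$ (hence $-h$) is continuously differentiable and thus strictly differentiable; by \cref{Clarkesumrule}(ii) this yields the equality $\partial_C\phi(y)=\partial g(y)-h'(y)$. As $g$ is a finite convex function on $\mathbb{R}$, its subdifferential is the compact interval $\partial g(y)=[g'_-(y),g'_+(y)]$ bounded by the left and right derivatives, so $\partial_C\phi(y)=[\,g'_-(y)-h'(y),\,g'_+(y)-h'(y)\,]$.

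Next I would turn the hypothesis $0\notin\partial_C\phi(y)$ into a sign condition. Because $\partial_C\phi(y)$ is an interval, $0$ lies outside it exactly when either $h'(y)>g'_+(y)$ (the interval sits strictly to the left of $0$) or $h'(y)<g'_-(y)$ (it sits strictly to the right). I would handle these two cases separately, the aim in each being to determine the sign of $d$.

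The coupling step combines the second hypothesis $h'(x)\in\partial g(y)=[g'_-(y),g'_+(y)]$ with the monotonicity of $h'$. In the first case $h'(x)\le g'_+(y)<h'(y)$, so $h'(x)<h'(y)$, and since $h'$ is nondecreasing this forces $x<y$, i.e. $d>0$. Using the one-dimensional identity $g'(y;d)=\max_{u\in\partial g(y)}u\,d$, for $d>0$ the maximum is attained at the right endpoint, giving $g'(y;d)=g'_+(y)\,d$; hence $\phi'(y;d)=g'(y;d)-h'(y)\,d=(g'_+(y)-h'(y))\,d<0$. The second case is symmetric: $h'(x)\ge g'_-(y)>h'(y)$ yields $d<0$, for which the maximum is attained at the left endpoint, so $g'(y;d)=g'_-(y)\,d$ and $\phi'(y;d)=(g'_-(y)-h'(y))\,d<0$ as well.

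There is no serious obstacle beyond careful bookkeeping. The one point that must be handled correctly is the one-dimensional evaluation of the convex directional derivative $g'(y;d)$, which selects $g'_+(y)$ when $d>0$ and $g'_-(y)$ when $d<0$, matched against the sign of $d$ forced by the hypotheses. (Consistently, $d=0$ is impossible here, since it would give $h'(x)=h'(y)\in\partial g(y)$ and hence $0\in\partial_C\phi(y)$.) The feature that makes the statement true on $\mathbb{R}$---and fails in higher dimensions, as \cref{ex:failure} illustrates---is precisely that $\partial g(y)$ is an interval, so excluding $0$ from $\partial_C\phi(y)$ pins down on which side of $\partial g(y)$ the value $h'(y)$ lies, and monotonicity of $h'$ then fixes the sign of $d$ accordingly.
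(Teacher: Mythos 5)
Your proof is correct and takes essentially the same route as the paper's: translate $0\notin\partial_C\phi(y)$ into $h'(y)$ lying strictly on one side of the interval $\partial g(y)$, combine this with $h'(x)\in\partial g(y)$ and the monotonicity of $h'$ to fix the sign of $d=y-x$, and then evaluate the one-dimensional directional derivative at the matching endpoint of $\partial g(y)$. If anything, your write-up is slightly more careful than the paper's, which factors $\phi'(y;y-x)=(y-x)\sup_{z\in\partial g(y)}\left\{z-h'(y)\right\}$ (literally valid only when $y-x>0$, though harmless there since all elements of $\partial g(y)-h'(y)$ share a sign in each case) and leaves both the equivalence $0\notin\partial_C\phi(y)\Leftrightarrow h'(y)\notin\partial g(y)$ and the exclusion of $x=y$ implicit.
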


\begin{proof}
First, observe that
$$\phi'(y;y-x)=(y-x)\sup_{z\in\partial g(y)}\left\{z-h'(y)\right\}.$$
Since $h$ is convex, one has
$$\left(h'(x)-h'(y)\right)(x-y)\geq 0.$$
Suppose that $x-y>0$. Then, $h'(x)\geq h'(y)$. Since $h'(y)\not\in\partial g(y)$ and $\partial g(y)$ is convex, we deduce that $h'(y)<z$ for all $z\in\partial g(y)$, which implies $\phi'(y;y-x)< 0$. A similar argument shows that $\phi'(y;y-x)< 0$ when $x-y<0$. This concludes the proof.
\end{proof}

We are now in a position to state our first convergence result of the iterative sequence generated by BDCA, whose statement coincides with~\cite[Proposition~5]{BDCA2018}. The first part of its proof requires some small adjustments due to the nonsmoothness of $h$.

\begin{theorem}\label{prop:innerloop}For any $x_{0}\in\mathbb{R}^{m}$, either BDCA
returns a critical point of~$\left(\mathcal{P}\right)$ or it generates
an infinite sequence such that the following holds.
\begin{enumerate}
\item $\phi(x_{k})$ is monotonically decreasing and convergent to some
$\phi^{*}$.
\item Any limit point of $\{x_{k}\}$ is a critical point of $\left(\mathcal{P}\right)$.
If in addition, $\phi$ is coercive then there exits a subsequence
of $\{x_{k}\}$ which converges to a critical point of~$\left(\mathcal{P}\right)$.
\item $\sum_{k=0}^{+\infty}\|d_{k}\|^{2}<+\infty.$ Further, if there is some $\overline{\lambda}$ such that $\lambda_k\leq\overline{\lambda}$ for all $k$, then $\sum_{k=0}^{+\infty}\|x_{k+1}-x_{k}\|^{2}<+\infty$.\end{enumerate}
\end{theorem}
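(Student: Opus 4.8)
The plan is to dispose of the finite-termination alternative first and then run everything off a single master descent inequality. If $d_k=0$ for some $k$, BDCA stops; then $y_k=x_k$, and the optimality condition for the subproblem $(\mathcal{P}_k)$, namely $\nabla g(y_k)=u_k\in\partial h(x_k)$, reads $\nabla g(x_k)\in\partial h(x_k)$, so the returned point is critical. Otherwise the sequence is infinite, and the key estimate is obtained by combining the acceptance criterion of Step~4, $\phi(y_k+\lambda_k d_k)\le\phi(y_k)-\alpha\lambda_k^2\|d_k\|^2$, with part~(i) of \cref{prop:main_inequality}, recalling $x_{k+1}=y_k+\lambda_k d_k$:
\[
\phi(x_{k+1})\le\phi(y_k)-\alpha\lambda_k^2\|d_k\|^2\le\phi(x_k)-\bigl(\rho+\alpha\lambda_k^2\bigr)\|d_k\|^2.
\]
Part~(iii) of \cref{prop:main_inequality} guarantees the line search terminates finitely, so $x_{k+1}$ is well defined at each step.

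For statement~1, since $\rho+\alpha\lambda_k^2>0$ the master inequality gives $\phi(x_{k+1})\le\phi(x_k)$, so $\{\phi(x_k)\}$ is nonincreasing; being bounded below by \eqref{eq:phi_bounded_below}, it converges to some $\phi^*$. For statement~3, I would telescope $\rho\|d_k\|^2\le\phi(x_k)-\phi(x_{k+1})$ over $k=0,\dots,N$ to get $\rho\sum_{k=0}^{N}\|d_k\|^2\le\phi(x_0)-\phi(x_{N+1})\le\phi(x_0)-\phi^*<+\infty$, whence $\sum_{k=0}^{\infty}\|d_k\|^2<+\infty$. The addendum follows from $x_{k+1}-x_k=(1+\lambda_k)d_k$, so that $\|x_{k+1}-x_k\|^2=(1+\lambda_k)^2\|d_k\|^2\le(1+\overline\lambda)^2\|d_k\|^2$ under the assumed bound, and summing.

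Statement~2 is where the nonsmoothness of $h$ forces a change of argument. Summability yields $\|d_k\|\to0$, i.e.\ $y_k-x_k\to0$; hence for a limit point $\bar x=\lim_j x_{k_j}$ one also has $y_{k_j}\to\bar x$. From $\nabla g(y_k)=u_k\in\partial h(x_k)$ and the continuity of $\nabla g$ (guaranteed by Assumption~2, $g$ being $C^1$ on an open set containing $\dom h$), I get $u_{k_j}=\nabla g(y_{k_j})\to\nabla g(\bar x)$. Where the smooth case would merely invoke continuity of $\nabla h$, here I would instead pass to the limit in the subgradient inequality $h(z)\ge h(x_{k_j})+\langle u_{k_j},z-x_{k_j}\rangle$, using lower semicontinuity of $h$ to obtain $h(z)\ge h(\bar x)+\langle\nabla g(\bar x),z-\bar x\rangle$ for all $z$; this is exactly the closedness of the graph of $\partial h$, and it gives $\nabla g(\bar x)\in\partial h(\bar x)$, so $\bar x$ is critical (evaluating the inequality at any $z\in\dom h$ also shows $\bar x\in\dom h$). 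Finally, if $\phi$ is coercive its sublevel sets are bounded; since $\phi(x_k)\le\phi(x_0)$ by statement~1, the whole sequence $\{x_k\}$ lies in one such set, is bounded, and thus has a convergent subsequence, whose limit is critical by the preceding argument.

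I expect the main obstacle to be the limit-passing in statement~2: correctly invoking the outer semicontinuity (closed-graph) property of $\partial h$. This requires $h$ to be lower semicontinuous and care that $\bar x\in\dom h$, neither of which is stated verbatim among the hypotheses; I would justify lower semicontinuity from Assumption~2 (subdifferentiability at every point of $\dom h$ forces $h$ to agree with its closure there), making the passage to the limit legitimate.
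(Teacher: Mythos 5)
Your proposal is correct and follows essentially the same route as the paper: the same termination argument via the optimality condition of $(\mathcal{P}_k)$, the same master inequality \eqref{eq:phi_decreasing} driving statements~1 and~3, and the same limit-point argument $y_{k_j}\to\bar x$, $\nabla g(y_{k_j})\to\nabla g(\bar x)$, closed graph of $\partial h$ for statement~2. The only differences are presentational: you write out the telescoping for~(iii), which the paper omits by citing \cite[Proposition~5(iii)]{BDCA2018}, and you unfold the closed-graph step (including the lower-semicontinuity caveat) that the paper delegates to \cite[Theorem~24.4]{TR}.
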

\begin{proof}
If BDCA stops at Step~3 and returns $x_{k}$, then $x_{k}=y_k$.
Because $y_k$ is the unique solution of the strongly convex problem $(\mathcal{P}_k)$, we have
$$\nabla g(x_k)=u_k \in \partial h(x_k),
$$
i.e., $x_k$ is a critical point
of~$\left(\mathcal{P}\right)$.
Otherwise, by \cref{prop:main_inequality}
and Step~4 of BDCA, we have
\begin{equation}\label{eq:phi_decreasing}
\phi(x_{k+1})\leq\phi(y_{k})-\alpha\lambda^2_{k}\|d_{k}\|^{2}\leq\phi(x_{k})-\left(\alpha\lambda^2_{k}+\rho\right)\|d_{k}\|^{2}.
\end{equation}
Therefore, the sequence $\{\phi(x_{k})\}$ converges
to some $\phi^{*}$, since is monotonically decreasing
and bounded from below by~\eqref{eq:phi_bounded_below}. This proves~\emph{(i)}.
As a consequence, we obtain
\[
\phi(x_{k+1})-\phi(x_{k})\to0,
\]
which implies $\|d_{k}\|^{2}=\|y_{k}-x_{k}\|^{2}\to 0$, by~\eqref{eq:phi_decreasing}.

If $\bar{x}$ is a limit point of $\{x_{k}\}$, there exists a subsequence $\{x_{k_{i}}\}$
converging to $\bar{x}$. Then, as $\|y_{k_{i}}-x_{k_{i}}\|\to 0$,
we have $y_{k_{i}}\to\bar{x}$.
Since $\nabla g$ is continuous, we get
$$
u_{k_i} = \nabla g(y_{k_i}) \to \nabla g(\bar{x}).
$$
Hence, we deduce $\nabla g(\bar{x})\in \partial h(\bar{x})$, thanks to the closedness of the graph of $\partial h$ (see~\cite[Theorem 24.4]{TR}). When $\phi$ is coercive, by~\emph{(i)},
the sequence $\{x_{k}\}$ must be bounded, which implies the rest of the claim in~\emph{(ii)}.

The proof of~\emph{(iii)} is similar to that of~\cite[Proposition~5(iii)]{BDCA2018} and
is thus omitted.
\end{proof}

\begin{remark}
In our approach, both functions $g$ and $h$ are assumed to be strongly convex with constant $\rho>0$. It is well-known that the performance of DCA heavily depends on the decomposition of the objective function~\cite{an_numerical_1996,tao1998dc}.
There is an infinite number of ways of doing this and it is challenging to find a ``good'' one~\cite{tao1998dc}.
To get rid of this assumption, one could add a proximal term $\frac{\rho_k}{2}\|x-x_k\|^2$
to the objective of the convex optimization subproblem~$(\mathcal{P}_k)$ in Step 2, as done in the proximal point algorithm (see~\cite{fukushima_generalized_1981}). This technique is employed in the proximal DCA, see~\cite{AnNam2017,BB2018,An2018,moudafi_proximalDC_2006}.
With some minor adjustments in the proofs, it is easy to show that the resulting algorithm satisfies both \cref{prop:main_inequality,prop:innerloop}.
\end{remark}

\section{Convergence  under the Kurdyka--\L{}ojasiewicz property}\label{sec:KL}

In this section, we  prove the convergence of the sequence generated by BDCA as long as the sequence has a cluster point at which $\phi$ satisfies the strong Kurdyka--\L{}ojasiewicz inequality~\cite{lojasiewicz1965ensembles,Kurdyka,bolteArisLewis2007} and $\nabla g$ is locally Lipschitz. As we shall see, under some additional assumptions, linear convergence can be also guaranteed.

\begin{definition}
Let $f: \mathbb{R}^{m} \to \mathbb{R} $ be a locally Lipschitz function.
We say that $f$ satisfies the \emph{strong Kurdyka--\L{}ojasiewicz inequality} at $x^* \in \mathbb{R}^m$ if
 there exist $\eta \in {]0, +\infty[}$, a neighborhood $U$ of
$x^*$, and a concave function $\varphi : [0,\eta] \to {[0,+\infty[}$ such that:
\begin{enumerate}
\item $\varphi (0)=0$;
\item $\varphi$ is of class $\mathcal{C}^1$ on ${]0,\eta[}$;
\item $\varphi' > 0$  on $]{0,\eta[}$;
\item for all
$x \in U$ with $f(x^*) < f(x) < f(x^*)+\eta$
we have
$$
\varphi'(f(x)-f(x^*)) \operatorname{dist}\left(0,\partial_C f(x)\right) \geq 1.
$$
\end{enumerate}
\end{definition}

For strictly differentiable functions the latter reduces to the standard definition of the K\L{}-inequality.
Bolte et al. \cite[Theorem~14]{bolteArisLewis2007} show that {\em definable functions} satisfy the strong K\L{}-inequality
at each point in $\dom \partial_C f$, which covers a large variety of practical cases.

\begin{remark}\label{rem:Aris}
Although the concavity of the function $\varphi$ does not explicitly appear in the statement of \cite[Theorem~14]{bolteArisLewis2007}, the function $\varphi$ can be chosen to be concave (since $\varphi$ is \mbox{o-minimal} by construction, its second derivative exists and maintains the sign on an interval ${]0,\delta[}$, and this sign is necessarily negative). If the function $f$ is not o-minimal but is convex and satisfies the Kurdyka--\L{}ojasiewicz inequality with a function $\varphi$ which is not concave, then $f$ also satisfies the Kurdyka--\L{}ojasiewicz inequality with another function $\Psi$ which is concave (see~\cite[Theorem~29]{BDLM10}).
\end{remark}

\begin{theorem}
\label{th:convergence_KL}
For any $x_{0}\in\mathbb{R}^{m}$, consider the sequence $\left\{ x_{k}\right\} $ generated by the BDCA.
Suppose that $\{x_{k}\}$ has a cluster point~$x^{*}$, that $\nabla g$ is locally
Lipschitz continuous around $x^{*}$
and that $\phi$ satisfies the strong Kurdyka--\L{}ojasiewicz inequality at $x^{*}$.
Then $\{x_{k}\}$  converges to~$x^{*},$ which is a critical point
of~$\left(\mathcal{P}\right)$.
\end{theorem}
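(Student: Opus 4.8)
The plan is to follow the finite-length strategy of Attouch, Bolte, Redont and Soubeyran, adapted to the present nonsmooth DC setting. First I would dispose of the trivial case: if BDCA terminates it returns a critical point by \cref{prop:innerloop}, so I assume the generated sequence is infinite. From \cref{prop:innerloop} I record that $\phi(x_k)$ decreases to some $\phi^*$, that $\sum_k\|d_k\|^2<\infty$ (hence $\|d_k\|\to 0$), and that the assumed cluster point gives a subsequence $x_{k_i}\to x^*$. Since $d_k\to0$, one also has $y_{k_i}\to x^*$, and using the continuity of $\phi$ at $x^*$ together with the monotone convergence of $\phi(x_k)$ I would identify $\phi^*=\phi(x^*)$. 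Moreover $\phi(x_k)>\phi^*$ for every $k$, since otherwise monotonicity and the sufficient-decrease estimate~\eqref{eq:phi_decreasing} would force $d_k=0$ and the algorithm would have stopped.

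The two quantitative ingredients are a \emph{sufficient decrease} and a \emph{subgradient bound}. The decrease is already available: from~\eqref{eq:phi_decreasing}, $\phi(x_k)-\phi(x_{k+1})\geq\rho\|d_k\|^2$. For the subgradient bound I would exploit the DC structure together with the fact that $g$ is (of class $\mathcal{C}^1$, hence) strictly differentiable: by \cref{Clarkesumrule} this yields $\partial_C\phi(x)=\nabla g(x)-\partial h(x)$. Choosing the subgradient $u_k\in\partial h(x_k)$ selected by the algorithm and recalling that optimality of $y_k$ for $(\mathcal{P}_k)$ gives $u_k=\nabla g(y_k)$, the vector $\nabla g(x_k)-u_k=\nabla g(x_k)-\nabla g(y_k)$ belongs to $\partial_C\phi(x_k)$. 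Hence, as long as $x_k$ and $y_k$ lie in a ball $\mathbb{B}(x^*,r)$ on which $\nabla g$ is $L$-Lipschitz, one obtains $\operatorname{dist}\bigl(0,\partial_C\phi(x_k)\bigr)\leq\|\nabla g(x_k)-\nabla g(y_k)\|\leq L\|d_k\|$.

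With $r_k:=\phi(x_k)-\phi^*$, which is eventually in ${]0,\eta[}$, the strong Kurdyka--\L{}ojasiewicz inequality gives $\varphi'(r_k)\operatorname{dist}\bigl(0,\partial_C\phi(x_k)\bigr)\geq 1$, so $\varphi'(r_k)\geq 1/(L\|d_k\|)$. Combining the concavity of $\varphi$ with the two estimates above,
$$\varphi(r_k)-\varphi(r_{k+1})\geq\varphi'(r_k)\bigl(r_k-r_{k+1}\bigr)\geq\frac{\rho\|d_k\|^2}{L\|d_k\|}=\frac{\rho}{L}\|d_k\|.$$
Telescoping this inequality over a tail shows $\sum_k\|d_k\|<\infty$, i.e.\ the search directions have finite length. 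Since $x_{k+1}-x_k=(1+\lambda_k)d_k$, boundedness of the step sizes ($\lambda_k\leq\overline{\lambda}$, as in the second part of \cref{prop:innerloop}) then yields $\sum_k\|x_{k+1}-x_k\|<\infty$; hence $\{x_k\}$ is Cauchy and converges, and its limit must be the cluster point $x^*$, which is critical by \cref{prop:innerloop}.

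The delicate point — and the step I expect to be the main obstacle — is that both the K\L{} inequality and the Lipschitz bound are only valid on a neighborhood $U\cap\mathbb{B}(x^*,r)$ of $x^*$, whereas a priori only a subsequence is known to enter it. I would therefore run the standard \emph{trapping} induction: fix $\sigma$ with $\mathbb{B}(x^*,\sigma)\subset U\cap\mathbb{B}(x^*,r)$, pick an index $N$ from the convergent subsequence for which $\|x_N-x^*\|$ and $\tfrac{L(1+\overline{\lambda})}{\rho}\varphi(r_N)$ are both smaller than $\sigma/2$ (possible because $r_N\to 0$ and $\varphi(0)=0$), and prove by induction that every subsequent iterate $x_k$, together with the associated $y_k=x_k+d_k$, remains in $\mathbb{B}(x^*,\sigma)$. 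The induction uses the telescoped bound $\sum_{j=N}^{k}\|x_{j+1}-x_j\|\leq\|x_N-x^*\|+\tfrac{L(1+\overline{\lambda})}{\rho}\varphi(r_N)$ to control $\|x_{k+1}-x^*\|$, which keeps the estimates of the previous paragraph applicable throughout the tail; verifying $y_k\in\mathbb{B}(x^*,r)$ follows from $\|d_k\|\to0$. Closing this induction is exactly what allows the finite-length conclusion to be applied globally rather than only along the subsequence.
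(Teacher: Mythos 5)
Your argument follows the paper's own route almost step for step (sufficient decrease, the Clarke subgradient bound $\nabla g(x_k)-\nabla g(y_k)\in\partial_C\phi(x_k)$ via \cref{Clarkesumrule}, the K\L{}--concavity estimate, and the trapping induction), but it contains one genuine gap: you invoke boundedness of the step sizes, ``$\lambda_k\leq\overline{\lambda}$, as in the second part of \cref{prop:innerloop}'', which is \emph{not} a hypothesis of the theorem. In Step~4 of BDCA the trial step sizes $\overline{\lambda}_k\geq 0$ are completely free, and this freedom is one of the main points of the paper: the self-adaptive strategy of \cref{sec:numa} sets $\overline{\lambda}_k:=\gamma\lambda_{k-1}$ with $\gamma>1$ after two successful steps, so $\{\lambda_k\}$ may well be unbounded. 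The bound $\lambda_k\leq\overline{\lambda}$ appears in \cref{prop:innerloop}(iii) only as an \emph{additional} assumption for the claim $\sum_k\|x_{k+1}-x_k\|^2<+\infty$ there; \cref{th:convergence_KL} carries no such assumption. The gap arises because you discarded the $\alpha\lambda_k^2$ term and used only the weak decrease $\phi(x_k)-\phi(x_{k+1})\geq\rho\|d_k\|^2$: this gives $\sum_k\|d_k\|<+\infty$, but converting that into $\sum_k\|x_{k+1}-x_k\|<+\infty$ through $x_{k+1}-x_k=(1+\lambda_k)d_k$ genuinely needs $\lambda_k$ bounded, and your trapping radius $\frac{L(1+\overline{\lambda})}{\rho}\varphi(r_N)$ inherits the same dependence.

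The repair is already contained in the inequality you cite: \eqref{eq:phi_decreasing} gives the stronger decrease $\phi(x_k)-\phi(x_{k+1})\geq\left(\alpha\lambda_k^2+\rho\right)\|d_k\|^2=\frac{\alpha\lambda_k^2+\rho}{(1+\lambda_k)^2}\|x_{k+1}-x_k\|^2$, while your subgradient bound reads $\operatorname{dist}\left(0,\partial_C\phi(x_k)\right)\leq\frac{L}{1+\lambda_k}\|x_{k+1}-x_k\|$. Running your K\L{}--concavity computation with these two estimates, phrased in terms of $\|x_{k+1}-x_k\|$ rather than $\|d_k\|$, yields $\|x_k-x_{k+1}\|\leq\frac{L(1+\lambda_k)}{\alpha\lambda_k^2+\rho}\left(\varphi(r_k)-\varphi(r_{k+1})\right)$, and the coefficient is bounded \emph{uniformly in} $\lambda_k$ by $K:=\max_{\lambda\geq 0}\frac{L(1+\lambda)}{\alpha\lambda^2+\rho}<+\infty$ (attained at $\hat\lambda=-1+\sqrt{1+\rho/\alpha}$), finiteness being precisely what the quadratic term $\lambda_k^2$ in the line search condition buys. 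This $K$ is the constant the paper uses both in the telescoping and in the trapping condition \eqref{eq:BallCondition2}; substituting it for your $L(1+\overline{\lambda})/\rho$ closes your induction verbatim and removes the extraneous boundedness assumption. Everything else in your proposal coincides with the paper's proof.
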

\begin{proof}
By \cref{prop:innerloop}, we have
$\lim_{k\to+\infty}\phi(x_{k})=\phi^{*}.$
Let $x^{*}$ be a cluster point of the sequence~$\left\{ x_{k}\right\} $. Then,
there exists a subsequence $\{x_{k_{i}}\}$ of $\{x_{k}\}$ such that $\lim_{i\to+\infty}x_{k_i}=x^*$. Thanks to the continuity of $\phi$, we deduce
\[
\phi(x^{*})=\lim_{i\to+\infty}\phi(x_{k_{i}})=\lim_{k\to\infty}\phi(x_{k})=\phi^{*}.
\]
Hence, the function $\phi$ is finite and has the same value~$\phi^{*}$ at every
cluster point of $\{x_{k}\}$.

If $\phi(x_{k})=\phi^{*}$ for some
$k>1$, then $\phi(x_{k})=\phi(x_{k+1})$, because the sequence $\{\phi(x_{k})\}$ is decreasing. From~\eqref{eq:phi_decreasing}, we deduce that $d_k=0$, so BDCA terminates after a finite number
of steps. Thus, from now on, we assume that $\phi(x_{k})>\phi^{*}$ for
all $k$.

Since $\nabla g$ is locally Lipschitz around $x^{*}$, there
exist some constants $L\geq0$ and $\delta_1>0$ such that
\begin{equation}
\|\nabla g(x)-\nabla g(y)\|\leq L\|x-y\|,\quad\forall x,y\in\mathbb{B}(x^{*},\delta_1).\label{eq:nabla_g_Lip}
\end{equation}

Further, since  $\phi$ satisfies the strong Kurdyka--\L{}ojasiewicz inequality at $x^{*}$,
there exist $\eta \in {]0, +\infty[}$, a neighborhood $U$ of
$x^*$, and a continuous and concave function $\varphi : [0,\eta] \to {[0,+\infty[}$ such that
for every $x \in U$ with $\phi(x^*) < \phi(x) < \phi(x^*)+\eta$, we have
\begin{equation} \label{KL}
\varphi'(\phi(x)-\phi(x^*)) \operatorname{dist}\left(0,\partial_C \phi(x)\right) \geq 1.
\end{equation}
Take $\delta_2$ small enough that $\mathbb{B}(x^*,\delta_2) \subset U$ and set $\delta := \frac{1}{2} \min \left\lbrace \delta_1,\delta_2 \right\rbrace $. Let
\begin{equation}\label{eq:K}
K:=\max_{\lambda\geq 0}\frac{L(1+\lambda)}{\alpha \lambda^2+\rho},
\end{equation}
which is attained at $\hat\lambda=-1+\sqrt{1+\rho/\alpha}$. Since $\lim_{i\to+\infty}x_{k_{i}}=x^{*}$, $\lim_{i\to+\infty}\phi(x_{k_{i}})=\phi^{*}$, $\phi(x_{k})>\phi^{*}$ for
all $k$, and $\varphi$ is continuous, we can find an index $N$ large enough such that
\begin{equation}\label{eq:BallCondition}
x_N \in \mathbb{B}(x^*,\delta), \quad \phi^* < \phi(x_N) < \phi^*+\eta
\end{equation}
and
\begin{equation}
\|x_{N}-x^{*}\|+ K  \varphi \left(\phi(x_{N})-\phi^*\right) < \delta.
\label{eq:BallCondition2}
\end{equation}
By \cref{prop:innerloop}(iii), we know that $d_{k}=y_{k}-x_{k}\to0$.
Then, taking a larger $N$ if needed, we can ensure that
\[
\|y_{k}-x_{k}\|\leq\delta,\quad\forall k\geq N.
\]
For all $k \geq N$ such that
$x_k \in \mathbb{B}(x^*,\delta)$, we have
\[
\|y_{k}-x^{*}\|\leq\|y_{k}-x_{k}\|+\|x_{k}-x^{*}\|\leq2\delta\leq \delta_1,
\]
then, using~\eqref{eq:nabla_g_Lip}, we obtain
$$
\|\nabla g(y_k) -\nabla g(x_k)\| \leq L\|y_k-x_k\| = \frac{L}{1+\lambda_k} \|x_{k+1}-x_k\|.
$$
On the other hand, we have
from the optimality condition of~$(\mathcal{P}_k)$ that
$$
\nabla g(y_k) = u_k \in \partial h(x_k),
$$
which implies, by \cref{Clarkesumrule},
$$
\nabla g(y_k) -\nabla g(x_k)\in \partial h(x_k) -\nabla g(x_k)=  \partial_C \left(-\phi(x_k)\right)=-\partial_C \phi(x_k).
$$
Therefore,
\begin{equation}
\operatorname{dist}\left(0,\partial_C \phi(x_k)\right) \leq \|\nabla g(y_k) -\nabla g(x_k)\| \leq  \frac{L}{1+\lambda_k} \|x_{k+1}-x_k\|.
\label{KL2}
\end{equation}

For all $k \geq N$ such that
$x_k \in \mathbb{B}(x^*,\delta)$ and $\phi^* < \phi(x_k) < \phi^*+\eta$,
 it follows from \eqref{KL2}, the concavity of $\varphi$, \eqref{KL} and \eqref{eq:phi_decreasing} that
\begin{align*}
\frac{L}{1+\lambda_k} \|x_k&-x_{k+1}\| \left( \varphi \left(\phi(x_k)-\phi^*\right) -\varphi \left(\phi(x_{k+1})-\phi^*\right)\right)\\
&\geq \operatorname{dist}\left(0,\partial_C \phi(x_k)\right) \left( \varphi \left(\phi(x_k)-\phi^*\right) -\varphi \left(\phi(x_{k+1})-\phi^*\right)\right)\\
&\geq \operatorname{dist}\left(0,\partial_C \phi(x_k)\right)  \varphi' \left(\phi(x_k)-\phi^*\right)\left( \phi(x_{k})-\phi(x_{k+1})\right)\\
&\geq  \phi(x_{k})-\phi(x_{k+1})\\
&\geq  \left(\alpha \lambda_{k}^2+\rho \right)\|y_k-x_k\|^2
= \frac{\alpha \lambda_{k}^2+\rho}{(1+\lambda_{k})^2}\|x_k-x_{k+1}\|^2,
\end{align*}
which implies, by~\eqref{eq:K}, that
\begin{align}\label{eq:StepRelation2}
\|x_k-x_{k+1}\| \nonumber
&\leq \frac{L(1+\lambda_k)}{\alpha \lambda_{k}^2+\rho} \left( \varphi \left(\phi(x_k)-\phi^*\right) -\varphi \left(\phi(x_{k+1})-\phi^*\right)\right)\\
&\leq K \left( \varphi \left(\phi(x_k)-\phi^*\right) -\varphi \left(\phi(x_{k+1})-\phi^*\right)\right).
\end{align}

We prove by induction that $x_{k}\in\mathbb{B}(x^{*},\delta)$
for all $k\geq N$. Indeed, from~\eqref{eq:BallCondition} the claim
holds for $k=N$. We suppose that it also holds for $k=N,N+1,\ldots,N+p-1$,
with~$p\geq1$.
Since $\left\lbrace \phi(x_k) \right\rbrace $ is a decreasing sequence converging to $\phi^*$, our choice of $N$
implies that $\phi^* < \phi(x_k) < \phi^*+\eta$ for all $k\geq N$.
Then~\eqref{eq:StepRelation2} is valid for $k=N,N+1,\ldots,N+p-1$.
Hence,
\begin{align*}
\left\Vert x_{N+p}-x^{*}\right\Vert
& \leq\left\Vert x_{N}-x^{*}\right\Vert +\sum_{i=1}^{p}\left\Vert x_{N+i}-x_{N+i-1}\right\Vert \\
 & \leq\left\Vert x_{N}-x^{*}\right\Vert +K\sum_{i=1}^{p}\left[\varphi \left(\phi(x_{N+i-1})-\phi^*\right) -\varphi \left(\phi(x_{N+i})-\phi^*\right)\right]\\
  & \leq\left\Vert x_{N}-x^{*}\right\Vert +K\varphi \left(\phi(x_{N})-\phi^*\right) < \delta,
\end{align*}
 where the last inequality follows from~\eqref{eq:BallCondition2}.

Thus, adding~\eqref{eq:StepRelation2} from $k=N$ to $P$, we get
\begin{equation*}
\sum_{k=N}^{P}\|x_{k+1}-x_{k}\| \leq
 K  \varphi \left(\phi(x_N)-\phi^*\right), \label{eq:bound_finite_sum}
\end{equation*}
and taking the limit as $P\to+\infty$, we conclude that
\begin{equation}
\sum_{k=1}^{+\infty}\|x_{k+1}-x_{k}\|<+\infty.\label{eq:finite_sum}
\end{equation}
Therefore, $\{x_{k}\}$ is a Cauchy sequence, and since~$x^{*}$
is a cluster point of~$\{x_{k}\}$, the whole sequence
converges to~$x^{*}$. By \cref{prop:innerloop}, $x^{*}$
must be a critical point of~$\left(\mathcal{P}\right)$.
\end{proof}

\begin{remark}(i) Observe that \cref{th:convergence_KL} also holds under the assumption that $-\phi$ satisfies the \emph{Kurdyka--\L{}ojasiewicz inequality} (which is the same estimate but for the limiting subdifferential).

(ii) As mentioned before, if one sets $\overline{\lambda}_k=0$ for all $k$, then BDCA becomes DCA. In this case, \cref{th:convergence_KL} is akin to~\cite[Theorem~3.4]{JOTA2018}, where the function $\phi$ is assumed to be subanalytic. We also note that in this setting only one of the functions $g$ or $h$ needs to be strongly convex, since one can easily check that~\cite[Proposition~3]{BDCA2018} still holds, and \cref{prop:main_inequality}(ii) is not needed anymore.
\end{remark}

Next, we establish the convergence rate on the iterative sequence $\left\lbrace x_k\right\rbrace$ when
$\phi$ satisfies the  Kurdyka--\L{}ojasiewicz inequality with $\varphi(t)=M t^{1-\theta}$
 for some $M>0$ and $0\leq \theta<1$. Observe that this property holds for all globally subanalytic functions \cite[Corollary~16]{bolteArisLewis2007}, which covers many classes of functions in applications.
We will employ the following useful lemma, whose proof appears within that of~\cite[Theorem~2]{attouch2009convergence} for specific values of $\alpha$ and $\beta$.
\begin{lemma} \cite[Lemma 1]{BDCA2018}
\label{lem:rate_convergence}Let~$\left\{ s_{k}\right\} $ be a nonnegative sequence
in~$\mathbb{R}$ and let~$\alpha,\beta$ be some positive constants.
Suppose that $s_{k}\to0$ and that the sequence satisfies
\begin{equation*}
s_{k}^{\alpha}\leq\beta(s_{k}-s_{k+1}),\quad\text{for all }k\text{ sufficiently large.}\label{eq:ineq_seq}
\end{equation*}
Then,
\begin{enumerate}
\item if~$\alpha=0$, the sequence~$\left\{ s_{k}\right\} $ converges
to~$0$ in a finite number of steps;
\item if~$\alpha\in\left]0,1\right]$, the sequence~$\left\{ s_{k}\right\} $
converges linearly to~$0$ with rate~$1-\frac{1}{\beta}$;
\item if~$\alpha>1$, there exists~$\eta>0$ such that
\[
s_{k}\leq\eta k^{-\frac{1}{\alpha-1}},\quad\text{for all }k\text{ sufficiently large.}
\]
\end{enumerate}
\end{lemma}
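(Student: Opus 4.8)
The plan is to handle the three regimes separately, after one preliminary observation that is shared by all of them. Since $s_k^\alpha\geq 0$, the hypothesis $s_k^\alpha\leq\beta(s_k-s_{k+1})$ forces $s_{k+1}\leq s_k$ for all large~$k$, so the sequence is eventually non-increasing. Moreover, if $s_k=0$ for some such~$k$, then $\beta(s_k-s_{k+1})\geq 0$ together with $s_{k+1}\geq 0$ forces $s_{k+1}=0$, hence the sequence is identically zero from that index onward and every conclusion holds trivially. I may therefore assume $s_k>0$ for all $k$ sufficiently large, and (shifting the index) that the hypothesis holds for every~$k$.

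Cases~(i) and~(ii) are elementary. When $\alpha=0$ the hypothesis reads $1\leq\beta(s_k-s_{k+1})$, i.e.\ $s_{k+1}\leq s_k-\tfrac{1}{\beta}$; a strict decrease by the fixed amount $\tfrac{1}{\beta}>0$ at every step is incompatible with $s_k\geq 0$ unless the sequence reaches~$0$ in finitely many steps, which is exactly finite termination. When $\alpha\in{]0,1]}$, I use $s_k\to 0$ to pick $N$ with $s_k\leq 1$ for $k\geq N$; for such~$k$, $\alpha\leq 1$ gives $s_k\leq s_k^\alpha$, so $s_k\leq\beta(s_k-s_{k+1})$ and therefore
\[
s_{k+1}\leq\left(1-\frac{1}{\beta}\right)s_k .
\]
If $\beta>1$ this is genuine linear convergence with rate $1-\tfrac{1}{\beta}\in{]0,1[}$; if $\beta\leq 1$ the factor is nonpositive and the sequence terminates in one further step, which is consistent with (indeed faster than) the stated rate.

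The real work is case~(iii), $\alpha>1$, and this is where I expect the only genuine difficulty. The idea is to \emph{linearize} the decay by passing to the auxiliary sequence $u_k:=s_k^{1-\alpha}$, which is positive and increasing (because $1-\alpha<0$ and $s_k$ is decreasing). Applying the mean value theorem to $t\mapsto t^{1-\alpha}$ on $[s_{k+1},s_k]$ produces some $\xi_k\in{]s_{k+1},s_k[}$ with
\[
u_{k+1}-u_k=(\alpha-1)\,\xi_k^{-\alpha}\,(s_k-s_{k+1}).
\]
Since $\xi_k<s_k$ and $t\mapsto t^{-\alpha}$ is decreasing, we have $\xi_k^{-\alpha}>s_k^{-\alpha}$; combining this with the hypothesis $s_k-s_{k+1}\geq\tfrac{1}{\beta}s_k^\alpha$ yields the crucial \emph{uniform} lower bound
\[
u_{k+1}-u_k\geq(\alpha-1)\,s_k^{-\alpha}\cdot\frac{1}{\beta}\,s_k^\alpha=\frac{\alpha-1}{\beta}>0.
\]
Telescoping from~$N$ gives $u_k\geq u_N+\tfrac{\alpha-1}{\beta}(k-N)$, so $u_k$ grows at least linearly in~$k$; undoing the substitution via $s_k=u_k^{-1/(\alpha-1)}$ produces $s_k\leq\eta\,k^{-1/(\alpha-1)}$ for all large~$k$, where $\eta$ is an explicit constant depending only on $\alpha$ and $\beta$ (obtained by absorbing the lower-order term $u_N-\tfrac{\alpha-1}{\beta}N$, e.g.\ bounding $u_k\geq\tfrac{\alpha-1}{2\beta}k$ once $k\geq 2N$).

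The delicate point throughout is case~(iii): the argument hinges on choosing precisely the exponent $1-\alpha$ for the auxiliary sequence, so that the nonlinear recursion in $s_k$ becomes an additive one in $u_k$, and on the monotonicity estimate $\xi_k^{-\alpha}\geq s_k^{-\alpha}$, which is exactly what lets the uncontrolled intermediate value~$\xi_k$ be replaced by $s_k$ and the hypothesis then be applied. (An alternative to the mean value theorem is a direct case split according to whether $s_{k+1}\leq\tfrac{1}{2}s_k$ or not, treating ``large'' and ``small'' single-step decreases separately, but the mean-value formulation covers both regimes uniformly and is cleaner.) The remaining bookkeeping — discarding indices where $s_k=0$, and tracking the sign of $1-\tfrac{1}{\beta}$ in case~(ii) — is routine.
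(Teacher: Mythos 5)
Your proof is correct. Note that the paper itself does not prove this lemma: it quotes it as \cite[Lemma~1]{BDCA2018} and remarks that the proof ``appears within that of \cite[Theorem~2]{attouch2009convergence} for specific values of $\alpha$ and $\beta$'', so the comparison must be made against those cited arguments. Your cases (i) and (ii) coincide with the standard ones, including the two points that are easy to overlook and that you handle correctly: the absorbing character of $s_k=0$ (which justifies the reduction to an eventually positive, strictly decreasing sequence, and is also what legitimizes applying the mean value theorem to $t\mapsto t^{1-\alpha}$ on $[s_{k+1},s_k]\subset{]0,+\infty[}$), and the degenerate sign of $1-\frac{1}{\beta}$ when $\beta\leq 1$, where finite termination subsumes the stated linear rate. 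The genuine difference is in case (iii): the cited proofs obtain the telescoping lower bound on $u_k=s_k^{1-\alpha}$ by splitting into two regimes according to whether $s_{k+1}$ stays above or falls below a fixed fraction of $s_k$ (your parenthetical ``alternative''), whereas your mean-value linearization delivers the uniform increment $u_{k+1}-u_k\geq\frac{\alpha-1}{\beta}$ in a single stroke, with an explicit constant $\eta=\left(\frac{\alpha-1}{2\beta}\right)^{-1/(\alpha-1)}$ valid for $k\geq 2N$. In fact you can dispense with the intermediate point $\xi_k$ altogether: by convexity of $t\mapsto t^{1-\alpha}$ on ${]0,+\infty[}$, the gradient inequality gives $s_{k+1}^{1-\alpha}\geq s_k^{1-\alpha}+(\alpha-1)s_k^{-\alpha}(s_k-s_{k+1})$ directly, which is exactly your bound. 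What the case-split buys in exchange for its extra length is robustness — it adapts to desingularizing functions $\varphi$ that are not powers — but for the power case stated here your route is shorter, uniform, and fully correct.
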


\begin{theorem}
\label{th:convergence_rate_KL}
Suppose that the sequence $\{x_{k}\}$ generated by the BDCA has the limit point~$x^{*}$.
Assume that $\nabla g$ is locally Lipschitz continuous around $x^{*}$
and $\phi$ satisfies the strong Kurdyka--\L{}ojasiewicz inequality at $x^{*}$ with $\varphi(t)=M t^{1-\theta}$
 for some $M>0$ and $0\leq \theta<1$.
Then, the following convergence rates are guaranteed:
\begin{enumerate}
\item if $\theta=0$, then the sequence~$\{x_{k}\}$ converges in a finite number of steps to~$x^{*}$;
\item if $\theta\in\left]0,\frac{1}{2}\right]$, then the sequence~$\{x_{k}\}$
converges linearly to~$x^{*}$;
\item if $\theta\in\left]\frac{1}{2},1\right[$, then there exist a positive
constant $\eta$ such that
\begin{gather*}
\|x_{k}-x^{*}\|\leq\eta k^{-\frac{1-\theta}{2\theta-1}}
\end{gather*}
for all large~$k$. \end{enumerate}
\end{theorem}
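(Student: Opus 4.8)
The plan is to reduce the whole statement to \cref{lem:rate_convergence} by tracking the tail sums of the step lengths. I would set $s_k:=\sum_{i=k}^{+\infty}\|x_{i+1}-x_i\|$, which is finite for large $k$ by~\eqref{eq:finite_sum} and satisfies $s_k\to 0$, $s_k-s_{k+1}=\|x_{k+1}-x_k\|\geq 0$, and, crucially, $\|x_k-x^*\|\leq s_k$. Hence any decay rate obtained for $\{s_k\}$ transfers verbatim to $\{\|x_k-x^*\|\}$. I would work only with indices $k\geq N$, where $N$ is the index constructed in the proof of \cref{th:convergence_KL}, so that $x_k\in\mathbb{B}(x^*,\delta)$, $\phi^*<\phi(x_k)<\phi^*+\eta$, and the inequalities~\eqref{eq:StepRelation2}, \eqref{KL} and~\eqref{KL2} are all at my disposal; I may assume $\phi(x_k)>\phi^*$ for every $k$, since otherwise BDCA terminates finitely and the strongest conclusion already holds.

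Writing $a_k:=\phi(x_k)-\phi^*>0$, I would extract two complementary inequalities. Summing~\eqref{eq:StepRelation2} from $k$ to $+\infty$ and telescoping (using $\varphi(0)=0$) gives the \emph{upper} estimate $s_k\leq K\varphi(a_k)=KM\,a_k^{1-\theta}$. For the \emph{lower} estimate, I would observe that~\eqref{KL2} together with $\|d_k\|=\tfrac{1}{1+\lambda_k}\|x_{k+1}-x_k\|\leq\|x_{k+1}-x_k\|$ yields $\operatorname{dist}(0,\partial_C\phi(x_k))\leq L(s_k-s_{k+1})$, while the strong K\L{} inequality~\eqref{KL} with $\varphi'(t)=M(1-\theta)t^{-\theta}$ gives $\operatorname{dist}(0,\partial_C\phi(x_k))\geq a_k^{\theta}/\bigl(M(1-\theta)\bigr)$. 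Chaining these produces $a_k^{\theta}\leq M(1-\theta)L\,(s_k-s_{k+1})$.

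The key step is to eliminate $a_k$ between the two relations. From the upper estimate, $a_k\geq\bigl(s_k/(KM)\bigr)^{1/(1-\theta)}$; raising to the power $\theta$ and substituting into the lower estimate gives
\[
s_k^{\theta/(1-\theta)}\leq C\,(s_k-s_{k+1}),\qquad C:=M(1-\theta)L\,(KM)^{\theta/(1-\theta)},
\]
valid for all $k\geq N$. This is precisely the hypothesis of \cref{lem:rate_convergence} with its exponent equal to $\theta/(1-\theta)$ and its constant equal to $C$. It then remains to translate the trichotomy of the lemma back into $\theta$: the exponent vanishes iff $\theta=0$, giving finite termination (case (i)); it lies in ${]0,1]}$ iff $\theta\in{]0,\tfrac12]}$, giving linear convergence of $\{s_k\}$ and hence of $\{\|x_k-x^*\|\}$ (case (ii)); and it exceeds $1$ iff $\theta\in{]\tfrac12,1[}$, giving $s_k\leq\eta\,k^{-1/(\alpha-1)}$ with $\tfrac{1}{\alpha-1}=\tfrac{1-\theta}{2\theta-1}$, which is case (iii).

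I expect the delicate points to be bookkeeping rather than conceptual. One must check that the exponent $\theta/(1-\theta)$ is well defined and that raising inequalities to fractional powers preserves their direction (both fine since $0\leq\theta<1$ and $a_k,s_k>0$); one must verify that the boundary value $\theta=0$ fits the vanishing-exponent branch (there $a_k^{\theta}=1$ and the recursion reads $1\leq C(s_k-s_{k+1})$, forcing finite termination exactly as the lemma predicts); and one must assert the recursion only for $k\geq N$, which is precisely the regime where~\eqref{eq:StepRelation2} and~\eqref{KL} were established in \cref{th:convergence_KL}. No genuinely new analytic obstacle arises, since both governing inequalities were already proved there; the content here is the correct matching of exponents to the three regimes of \cref{lem:rate_convergence}.
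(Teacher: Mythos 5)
Your proposal is correct and takes essentially the same route as the paper's own proof: the same tail sums $s_k=\sum_{i=k}^{+\infty}\|x_{i+1}-x_i\|$ with $\|x_k-x^*\|\leq s_k$, the same upper bound $s_k\leq KM(\phi(x_k)-\phi^*)^{1-\theta}$ from telescoping \eqref{eq:StepRelation2}, the same lower bound $(\phi(x_k)-\phi^*)^{\theta}\leq ML(1-\theta)(s_k-s_{k+1})$ from the K\L{} inequality combined with \eqref{KL2}, and the same elimination leading to \cref{lem:rate_convergence} with exponent $\theta/(1-\theta)$ and constant $ML(1-\theta)(KM)^{\theta/(1-\theta)}$. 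The only (harmless) difference is that you make explicit the bookkeeping the paper leaves implicit, such as the $\theta=0$ boundary case and the exponent translation $\frac{1}{\alpha-1}=\frac{1-\theta}{2\theta-1}$.
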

\begin{proof}
By~\eqref{eq:finite_sum}, we know that $s_{i}:=\sum_{k=i}^{+\infty}\|x_{k+1}-x_{k}\|$ is finite. Since~$\|x_{i}-x^{*}\|\leq s_{i}$
by the triangle inequality, the rate of convergence of~$x_{i}$
to $x^{*}$ can be deduced from the convergence rate of $s_{i}$ to~0.

Adding~\eqref{eq:StepRelation2} from~$i$ to~$P$ with $N\leq i\leq P$,
we have
$$\sum_{k=i}^{P}\|x_{k+1}-x_{k}\|\leq K \varphi\left(\phi(x_{i})-\phi^{*}\right)
=K M (\phi(x_{i})-\phi^{*})^{1-\theta},
$$
which implies that
\begin{equation} \label{relation1}
s_{i}=\lim_{P\to+\infty}\sum_{k=i}^{P}\|x_{k+1}-x_{k}\|\leq K M (\phi(x_{i})-\phi^{*})^{1-\theta}.
\end{equation}
Since $\phi$ satisfies the strong Kurdyka--\L{}ojasiewicz inequality at $x^{*}$ with $\varphi(t)=M t^{1-\theta}$, we have
$$
M(1-\theta)\left(\phi(x_{i})-\phi^{*}\right)^{-\theta}\operatorname{dist}\left(0,\partial_C\phi(x_i)\right) \geq 1.
$$
This and \eqref{KL2} imply
\begin{align}\label{relation2}
\left(\phi(x_{i})-\phi^{*}\right)^{\theta} \nonumber
&\leq M(1-\theta)\operatorname{dist}\left(0,\partial_C\phi(x_i)\right)\\\nonumber
&\leq \frac{ML(1-\theta)}{1+\lambda_i} \|x_{i+1}-x_i\| \\
&\leq ML(1-\theta)\|x_{i+1}-x_i\|.
\end{align}
Combining \eqref{relation1} and \eqref{relation2}, we obtain
\begin{equation*} 
s_{i}^{\frac{\theta}{1-\theta}} \leq \left(K M\right)^{\frac{\theta}{1-\theta}} (\phi(x_{i})-\phi^{*})^{\theta}\leq ML(1-\theta)\left(K M\right)^{\frac{\theta}{1-\theta}}(s_{i}-s_{i+1}).
\end{equation*}
Applying \cref{lem:rate_convergence}, with~$\alpha:=\frac{\theta}{1-\theta}$
and~$\beta:=ML(1-\theta)\left(K M\right)^{\frac{\theta}{1-\theta}}$, we deduce the convergence rates in~\emph{(i)}-\emph{(iii)}.
\end{proof}

\section{Applications and Numerical Experiments}\label{sec:numa}
The purpose of this section is to numerically compare the performance of DCA and BDCA. All our codes were written in Python~2.7 and the tests were run on an Intel Core i7-4770 CPU \@3.40GHz with 32GB RAM, under Windows 10 (64-bit).

In all the experiments in this section we use the following strategy for choosing the trial step size in Step~4 of BDCA, which makes use of the previous step sizes. We emphasize that the convergence results in the previous sections apply to any possible choice of the trial step sizes $\overline{\lambda}_k$. This is in contrast with~\cite{BDCA2018}, where $\overline{\lambda}_k$ had to be chosen constantly equal to some fixed parameter $\overline{\lambda}>0$.\medskip
\begin{center}
\bgroup 	\renewcommand\theenumi{\arabic{enumi}.} 	 \renewcommand\labelenumi{\theenumi}
\fbox{%
\begin{minipage}[t]{.98\textwidth}%
\textbf{Self-adaptive trial step size}\medskip\\
Fix $\gamma>1$. Set $\overline{\lambda}_0=0$. Choose some $\overline{\lambda}_1>0$ and obtain $\lambda_1$ by Step~4 of BDCA. \\ For any $k\geq 2$:
\begin{enumerate}
\item IF $\lambda_{k-2}=\overline{\lambda}_{k-2}$ AND $\lambda_{k-1}=\overline{\lambda}_{k-1}$ THEN set $\overline{\lambda}_{k}:=\gamma\lambda_{k-1}$;
    ELSE set $\overline{\lambda}_{k}:=\lambda_{k-1}$.
\item Obtain $\lambda_{k}$ from $\overline{\lambda}_{k}$ by Step~4 of BDCA.
\medskip{}
 \end{enumerate}
\end{minipage}}
\egroup{}
\end{center}\medskip

The latter \emph{self-adaptive strategy} uses the step size that was chosen in the previous iteration as a new trial step size for the next iteration, except in the case where two consecutive trial step sizes were successful. In that case, the trial step size is increased by multiplying the previously accepted step size by $\gamma>1$. Thus, we used a somehow conservative strategy in our experiments, where two successful iterations are needed before increasing the trial step size. Other strategies could be easily considered. Since we set $\overline{\lambda}_0=0$, the first iteration is computed with DCA. In all our experiments we took $\gamma:=2$.

The self-adaptive strategy for the trial step size has two key advantages with respect to the \emph{constant strategy} $\overline{\lambda}_k=\overline{\lambda}>0$, which was used in~\cite{BDCA2018}. The most important one is that we observed in our numerical tests almost a two times speed up in the running time of BDCA. The second advantage is that it is more adaptive and less sensitive to a wrong choice of the parameters. Indeed, in the constant strategy, a very large value of $\overline{\lambda}$ could make BDCA slow, due to the internal iterations needed in the backtracking step. On the other hand, a small value of $\overline{\lambda}$ would provide a trial step size that will be readily accepted, but will result in a small advantage of BDCA against DCA.

In the next two subsections, we compare the performance of DCA and BDCA in two important nonsmooth problems in data analysis: the Minimum Sum-of-Squares Clustering problem and the Multidimensional Scaling problem. Before doing that, let us begin by numerically demonstrating that the self-adaptive strategy permits to further improve the results of BDCA in the smooth problem arising from the study of systems of biochemical reactions tested in~\cite{BDCA2018}, where BDCA was shown to be more than four times faster than DCA. To this aim, we used the same setting than in~\cite[Section~5]{BDCA2018}. For each of five randomly selected starting points, we obtained the $1000^\text{th}$ iterate of BDCA with constant trial step size strategy $\overline{\lambda}=50$. Next, both BDCA with self-adaptive strategy (with $\beta=0.1$) and DCA were run from the same starting point until they reached the same objective value as the one obtained by BDCA with constant strategy. Instead of presenting a table with the results, we show in \cref{fig:ratios_MP} the ratios of the running times between the three algorithms, which permits to readily compare the three algorithms. On average, BDCA with self-adaptive strategy was $6.7$ times faster than DCA, and was $1.7$ times faster than BDCA with constant strategy, which in turns was $4.2$ times faster than DCA.

\begin{figure}[ht]
\centering
\includegraphics[width=.75\textwidth]{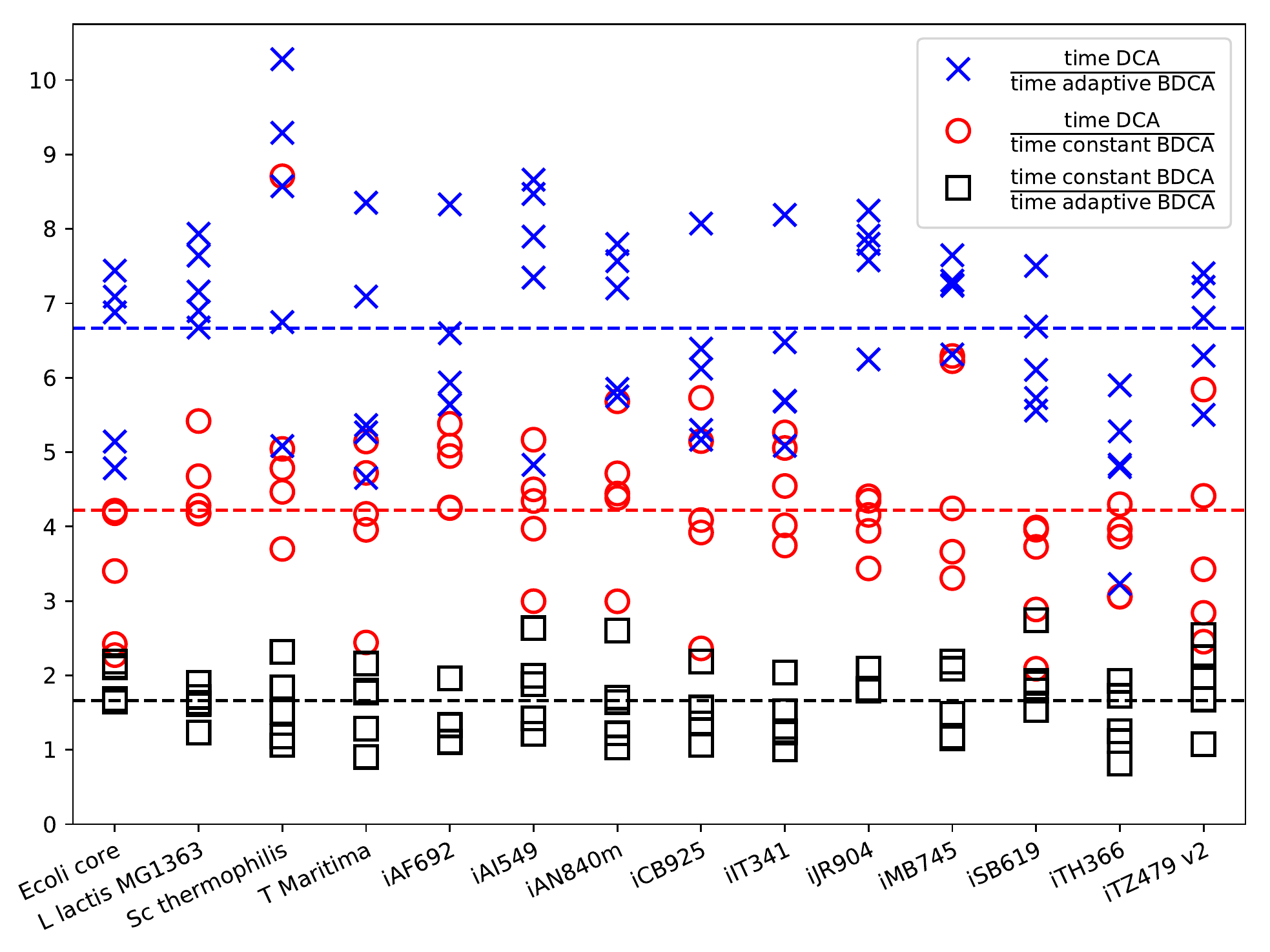}
\caption{Ratios of the running times of DCA, BDCA with constant trial step size and BDCA with self-adaptive trial step size for finding a steady state of various biochemical reaction network models~\cite{BDCA2018}. For each of the models, the algorithms were run using the same five random starting points. The average is represented with a dashed line.}\label{fig:ratios_MP}
\end{figure}

In the next two subsections we present various experiments with problems in data analysis. We consider two types of data: real and random. As real data, we use the geographic coordinates of the Spanish cities with more than 500 habitants\footnote{The data can be retrieved from the Spanish National Center of Geographic Information at \href{http://centrodedescargas.cnig.es}{http://centrodedescargas.cnig.es}.}. The advantage of this relatively large data in $\R^2$ is that it permits to visually illustrate some of the experiments.

\subsection {The Minimum Sum-of-Squares Clustering Problem} \label{SSCP}
Clustering is an unsupervised technique for data analysis whose objective is to group a
collection of objects into clusters based on similarity.
This is among the most popular techniques in data mining and can be mathematically described as follows.
Let $A = \{a^1,\ldots,a^n\}$ be a finite set of points in $\mathbb{R}^m$, which represent the data points to be
grouped. The goal is to partition $A$ into $k$ disjoint subsets $A^1,\ldots, A^k$, called clusters, such
that a clustering criterion is optimized.

There are many different criteria for the clustering problem. One of the most used is the \emph{Minimum Sum-of-Squares Clustering} criterion, where one tries to minimize the Euclidean distance of each data point to the centroid of its cluster~\cite{Bock1998,CYY2018,OB15}. Thus, each cluster $A_j$ is
identified by its center (or centroid) $x^j \in \mathbb{R}^m, j = 1,\ldots,k$. Letting $X:=\left(x^1,\ldots,x^k\right)\in\R^{m\times k}$, this gives rise to the following optimization problem:
$$
\text{minimize} \quad \varphi(X,\omega): = \frac{1}{n} \sum_{i=1}^{n} \sum_{j=1}^{k} \omega_{ij} \|x^j-a^i\|^2,
$$
where the binary variables $\omega_{ij}$ express the assignment of the point $a^i$
to the cluster $j$; i.e.,
$\omega_{ij} = 1$ if $a^i \in A^j$, and $\omega_{ij}=0$ otherwise. This problem can be equivalently reformulated as the following nonsmooth nonconvex unconstrained optimization problem (see \cite{CYY2018,OB15}):
\begin{equation}\label{eq:P_cluster}
\underset{X\in\mathbb{R}^{m\times k}}{\textrm{minimize}}\; \phi\!\left(X\right):=\frac{1}{n}\sum_{i=1}^{n}\min_{j=1,\ldots,k} \|x^j-a^i\|^2.
\end{equation}
As explained in~\cite{CYY2018,OB15}, we can write this problem as a DC problem of type~\eqref{eq:DC_2norm} by taking
\begin{gather*}
g\!\left(X\right):=\frac{1}{n}\sum_{i=1}^{n} \sum_{j=1}^{k} \left\|x^j-a^i\right\|^2+\frac{\rho}{2}\|X\|^2,\\
h\!\left(X\right):=\frac{1}{n}\sum_{i=1}^{n}\max_{j=1,\ldots,k} \sum_{t=1,t \not= j}^{k}\left\|x^t-a^i\right\|^2+\frac{\rho}{2}\|X\|^2,
\end{gather*}
for some $\rho\geq 0$, where $\|X\|$ is the Frobenius norm of $X$. Observe that both functions $g$ and $h$ are convex, and strongly convex if $\rho>0$.
Moreover, $g$ is differentiable, and the subdifferential of $h$ can be explicitly computed (see \cite[page 346]{OB15} or \cite[Equation~(3.21)]{CYY2018}).  
\begin{experiment}[Clustering the Spanish cities in the peninsula]\label{ex:Clustering_Spain}
Consider the problem of finding a partition into five clusters of the 4001 Spanish cities in the peninsula with more than 500 residents. For illustrating the difference between the iterations of DCA and BDCA, we present in \cref{fig:cluster_Spain} the result of applying 10 iterations of DCA and BDCA to the clustering problem~\eqref{eq:P_cluster} from a random starting point (composed by a quintet of points in $\R^2$), with the parameters $\rho=\frac{1}{10}$, $\alpha=0.1$, $\beta=0.5$ and $\overline{\lambda}_1=5$. Both algorithms converge to the same critical point, but it is apparent that the line search of BDCA makes it faster.
\begin{figure}[ht!]
\centering
\includegraphics[width=0.75\textwidth]{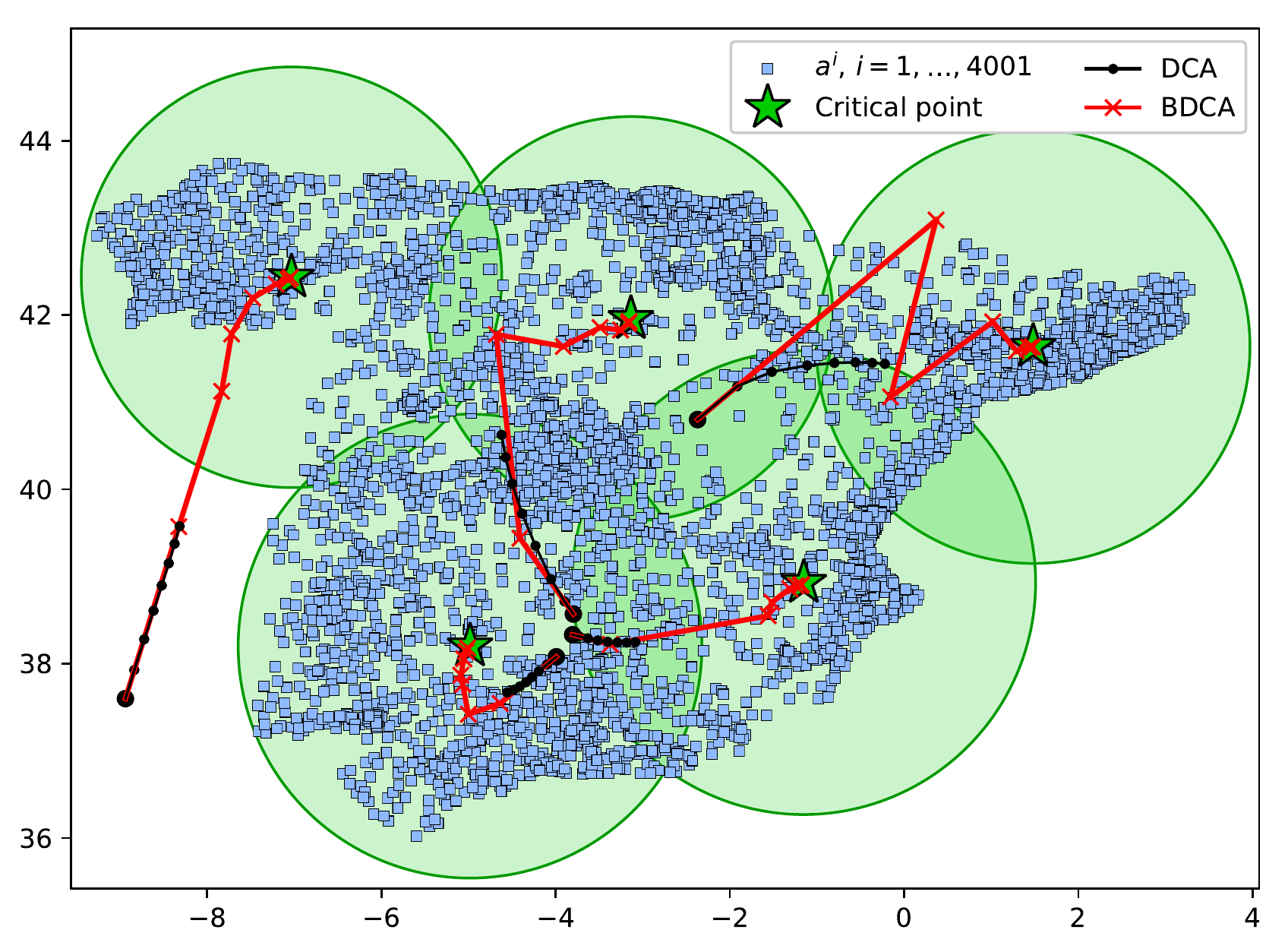}
\caption{Seven iterations of DCA and BDCA are computed from the same starting point for grouping the Spanish cities in the peninsula into five clusters.}\label{fig:cluster_Spain}
\end{figure}

Let us demonstrate that the behavior shown in \cref{fig:cluster_Spain} is not atypical. To do so, let us consider the same problem of the Spanish cities for a different number of clusters $k \in\{5,10,15,20,25,50,75,100\}$. For each of these values, we run BDCA for 100 random starting points with coordinates in ${]-9.26,  3.27[}\times{]36.02, 43.74[}$ (the range of the geographical coordinates of the cities). The algorithm was stopped when the relative error of the objective function $\phi$ was smaller than $10^{-3}$. Then, DCA was run from the same starting point until the same value of the objective function was reached, which did not happen in 31 instances because DCA  \emph{failed} (by which we mean that it converged to a worse critical point). In \cref{fig:cluster_Spain_ratio} we have plotted the ratios between the running time and the number of iterations, except for those instances where DCA failed. On average, BDCA was $16$ times faster than DCA, and DCA needed $18$ times more iterations to reach the same objective value as BDCA.
\begin{figure}[ht!]
\centering
\includegraphics[width=0.495\textwidth]{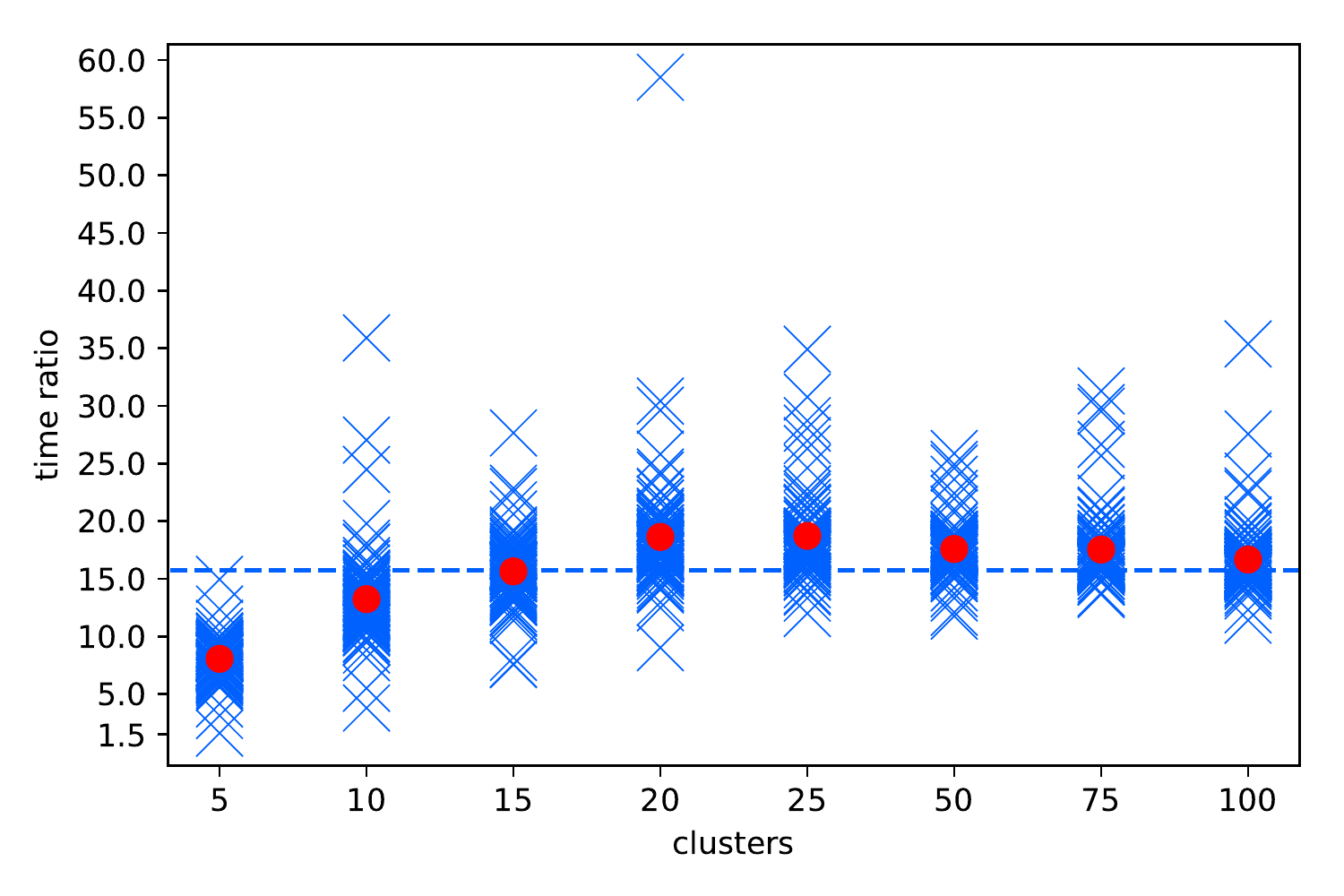}
\includegraphics[width=0.495\textwidth]{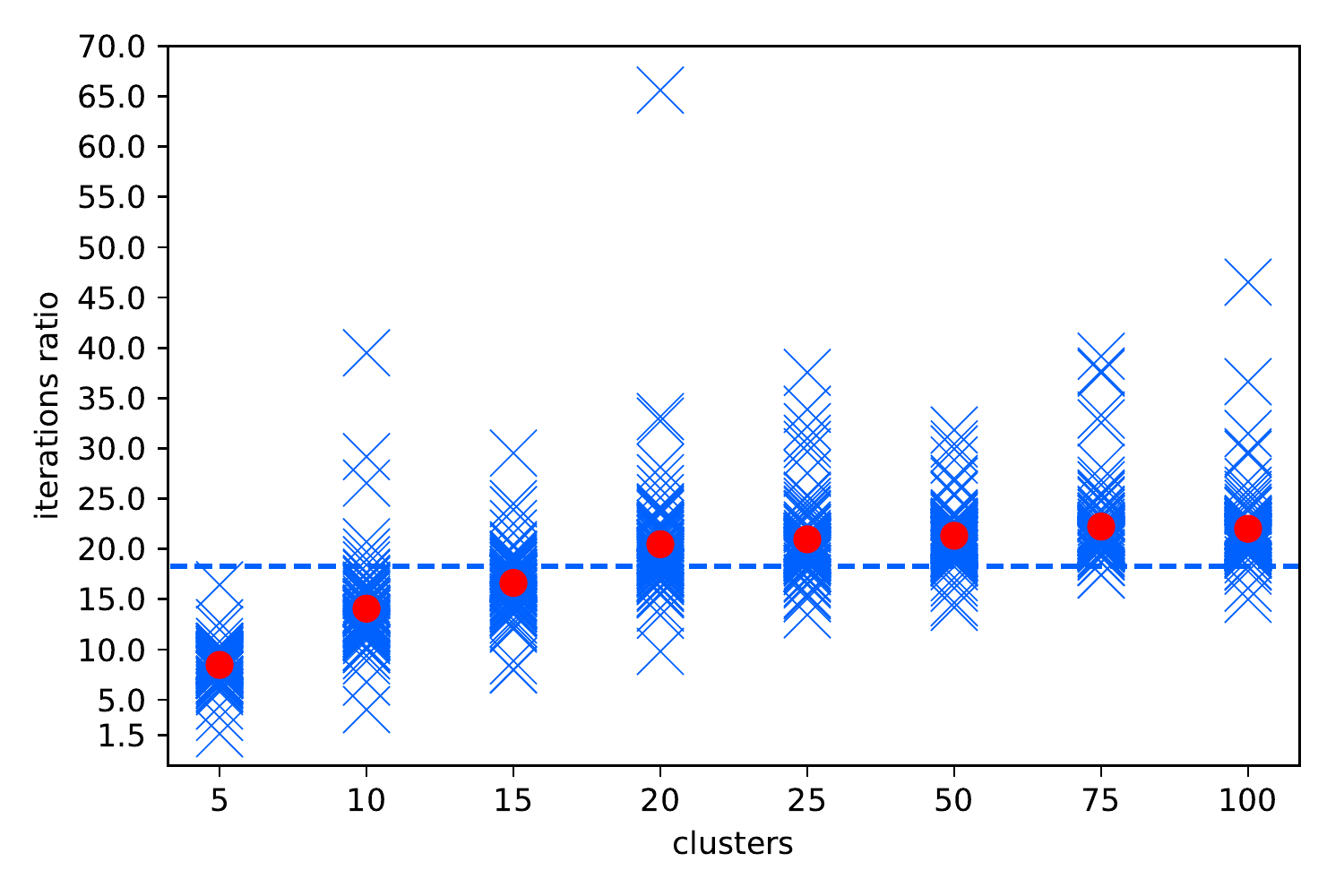}
\caption{Comparison between DCA and BDCA for solving the clustering problem of the cities in the Spanish peninsula described in \cref{ex:Clustering_Spain}.
We represent the ratios of running time (left) and number of iterations (right) between DCA and BDCA for 100 random instances for different values
of the number of clusters $k\in\{5,10,15,20,25,50,75,100\}$. The dashed line shows the overall average ratio, and the red dots represent the average
ratio for each value of $k$.}\label{fig:cluster_Spain_ratio}
\end{figure}
\end{experiment}

\begin{experiment}[Clustering random points in an $m$-dimensional box]\label{exp:2} In this numerical experiment, we generated $n$ random points in $\R^m$ whose coordinates were drawn from a normal distribution having a mean of $0$ and a standard deviation of $10$, with $n\in\{500,1000,5000,10,\!000\}$ and $m\in\{2,5,10,20\}$. For each pair of values of $n$ and~$m$, ten random starting points were chosen and BDCA was run to solve the $k$-clustering problem until the relative error of the objective function was smaller than $10^{-3}$, with $k \in\{5,10,15,20,25,50,75,100\}$. As in~\cref{ex:Clustering_Spain}, we run DCA from the same starting point than BDCA until the same value of the objective function was reached. The DCA failed to do so in 123 instances. The ratios between the respective running times are shown in \cref{fig:cluster_random}. On average, BDCA was $13.7$ times faster than DCA.
\begin{figure}[ht!]
\centering
\includegraphics[width=0.5\textwidth]{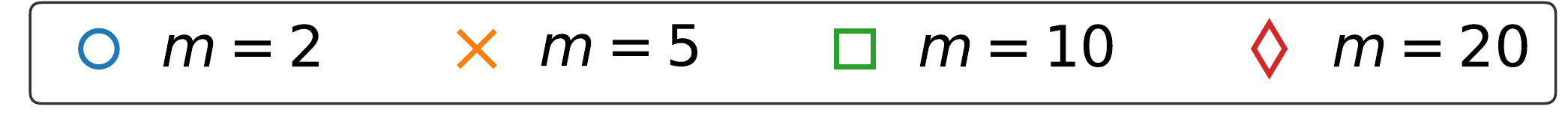}\\
\subfigure[$n=500$]{\includegraphics[width=0.495\textwidth]{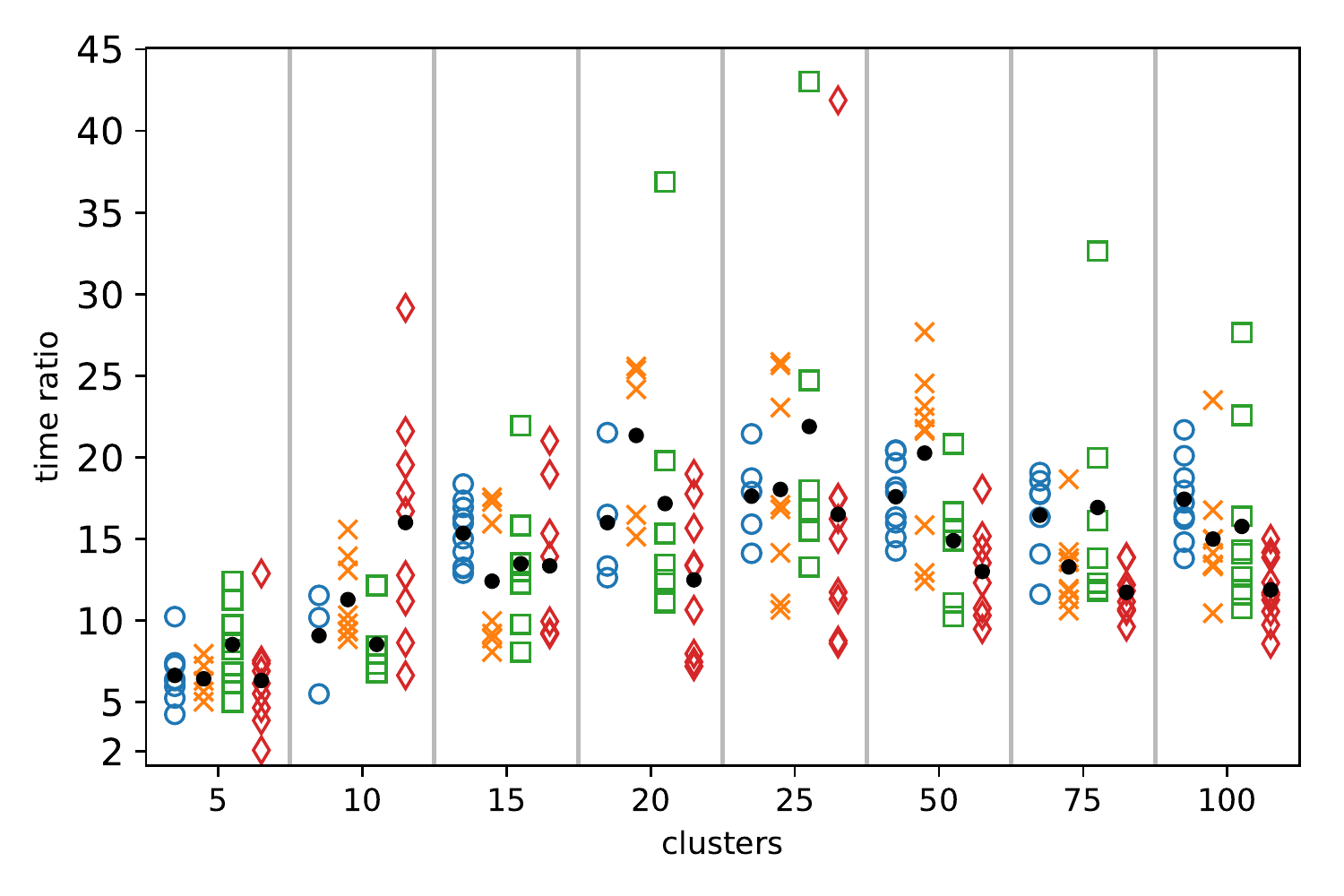}}
\subfigure[$n=1000$]{\includegraphics[width=0.495\textwidth]{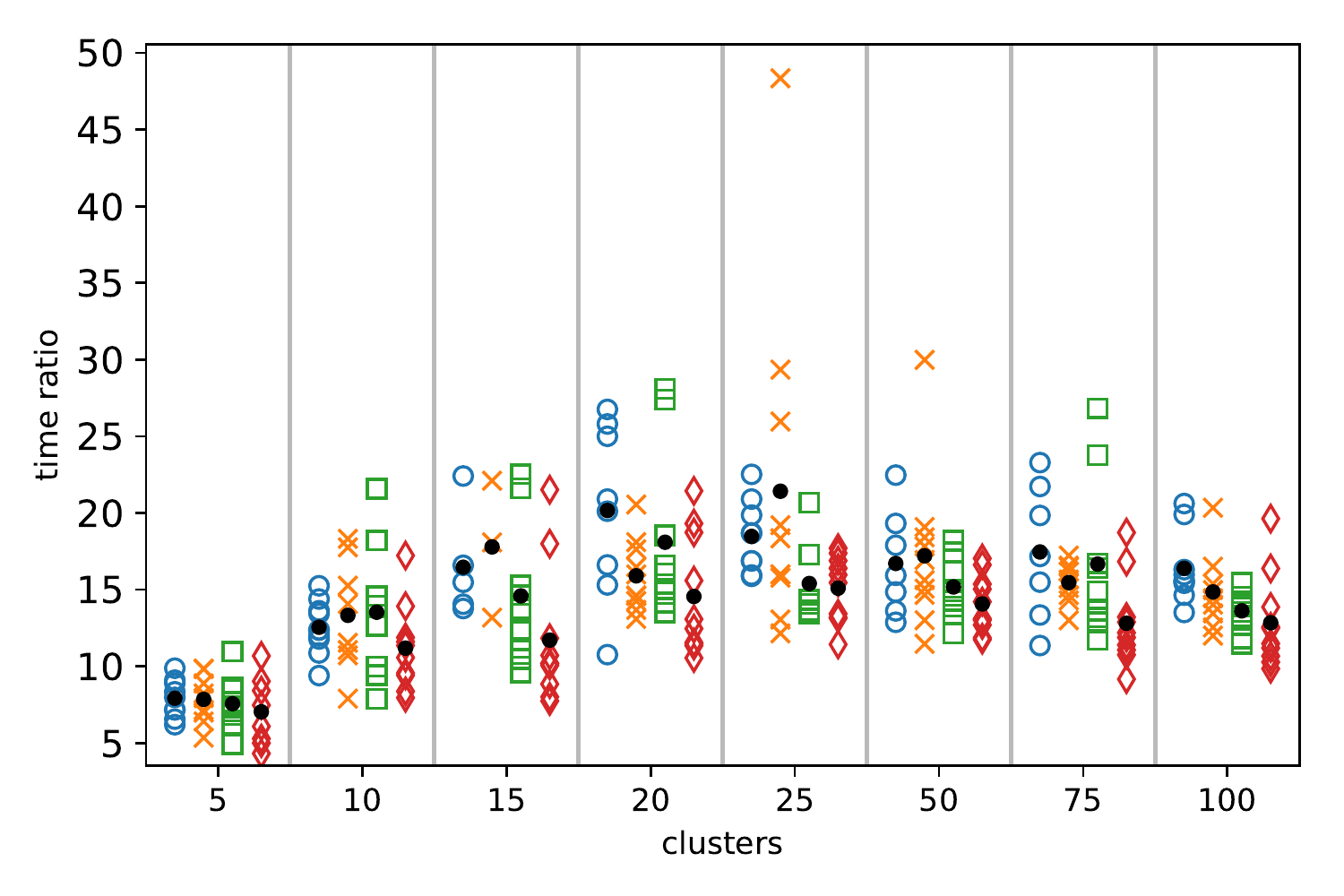}}
\subfigure[$n=5000$]{\includegraphics[width=0.495\textwidth]{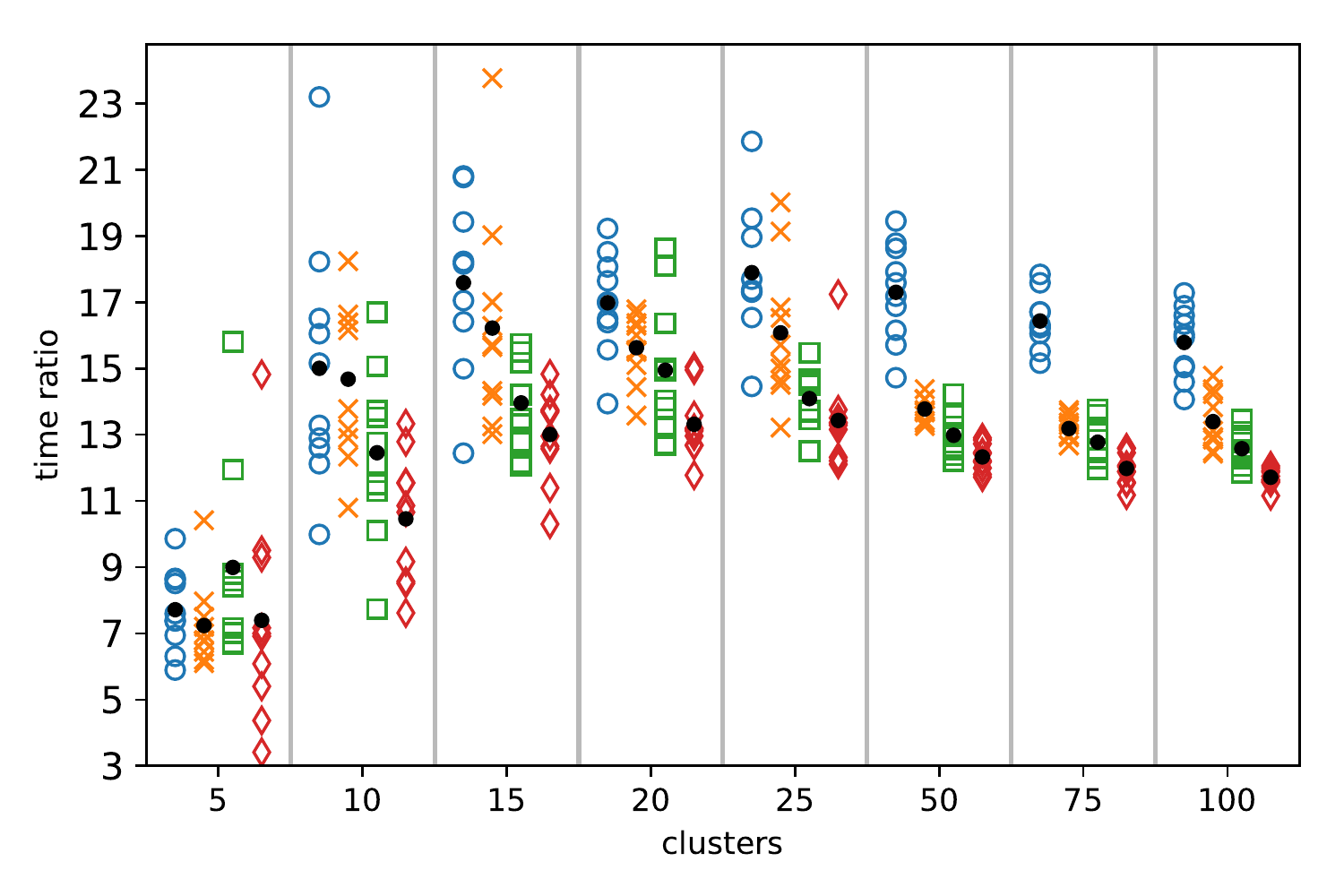}}
\subfigure[$n=10,\!000$]{\includegraphics[width=0.495\textwidth]{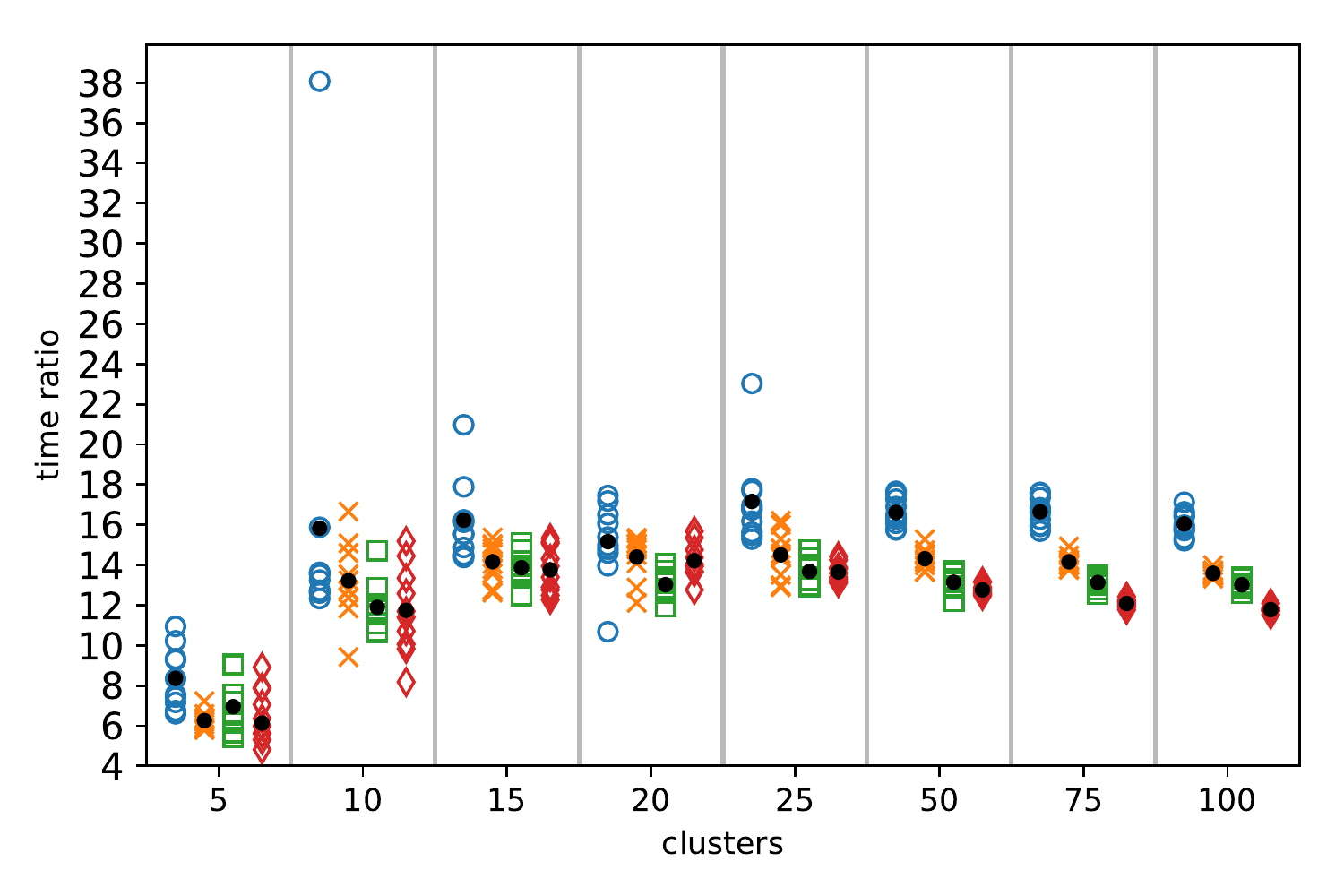}}
\caption{Comparison between DCA and BDCA for solving the clustering problems with random data described in \cref{exp:2}. For each value of $n\in\{500,1000,5000,10,\!000\}$ and $m\in\{2,5,10,20\}$ we represent the ratios of running time between DCA and BDCA for 10 random starting points for different values of the number of clusters $k\in\{5,10,15,20,25,50,75,100\}$. The black dots represent the average ratios.}\label{fig:cluster_random}
\end{figure}
\end{experiment}

\subsection {The Multidimensional Scaling Problem} \label{MDS}
Given only a table of distances between some objects, known as
the \emph{dissimilarity matrix}, \emph{Multidimensional Scaling (MDS)} is a technique that permits to represent the data in a small number of dimensions (usually two or three). If the objects are defined by $n$ points $x^1,x^2,\ldots,x^n$ in $\R^q$, the entries $\delta_{ij}$ of the dissimilarity matrix can be defined by the Euclidean distance between these points:
$$
\delta_{ij}=\|x^i-x^j\|:=\text{d}_{ij}(X),
$$
where we denote by $X$ the $n\times q$ matrix whose rows are $x^1,x^2,\ldots,x^n$.

Given a target dimension $p\leq q$, the metric MDS problem consists in finding $n$ points in $\mathbb{R}^p$, which are represented by an $n \times p$ matrix $X^*$, such that the quantity
$$
\mathit{Stress} (X^*): = \sum_{i<j} w_{ij} \left( \text{d}_{ij} (X^*)-\delta_{ij}\right)^2
$$
is smallest, where $w_{ij}$ are nonnegative weights. As shown in~\cite[p. 236]{Tao01}, this problem can be equivalently reformulated as a DC problem of type~\eqref{eq:DC_2norm} by setting
\begin{gather*}
g(X):=\frac{1}{2} \sum_{i<j} w_{ij} \text{d}^2_{ij} (X)+\frac{\rho}{2}\|X\|^2,\\
h(X):= \sum_{i<j} w_{ij} \delta_{ij} \text{d}_{ij} (X)+\frac{\rho}{2}\|X\|^2,
\end{gather*}
for some $\rho\geq 0$. Moreover, it is clear that $g$ is differentiable while $h$ is not. However, the subgradient of $h$ can be explicitly computed, see~\cite[Section 4.2]{Tao01}. Both functions are strongly convex for any $\rho>0$.

For this problem we replicated some of the numerical experiments in~\cite{Tao01}, where the authors demonstrate the good performance of DCA for solving MDS problems. Our main aim here is showing that, even for those problems where DCA works well in practice, BDCA is able to outperform it.

In our experiments, we set the weights $w_{ij}=1$ and the starting points were generated as in~\cite{Tao01}. First, we randomly chose a matrix $\widetilde{X}_0\in \mathbb{R}^{n\times p}$ with entries in ${]0,10[}$. Then, the starting point was set as $X_0:=\left(I-(1/n)ee^T\right)\widetilde{X}_0$, where $I$ and $e$ denote the identity matrix and the vector of ones in $\R^n$, respectively.  We used the parameters $\rho=\frac{1}{np}$, $\alpha=0.05$, $\overline{\lambda}_1=3$ and $\beta=0.1$.


\newlength{\tam}
\setlength{\tam}{10.2em}
\begin{experiment}[MDS for Spanish cities]\label{ex:Spain}
Consider the dissimilarity matrix defined by the distances between the 4155 Spanish cities with more than 500 residents, including this time those outside the peninsula to make the problem more difficult. 
The optimal value of this MDS problem is zero. In \cref{fig:Spain}(b) we have represented a starting point of the type $X_0:=\left(I-(1/4155)ee^T\right)\widetilde{X}_0$, where $\widetilde{X}_0\in \mathbb{R}^{4155\times 2}$ was randomly chosen with entries in ${]0,10[}$. In \cref{fig:Spain}(c)-(k) we plot the iterations of DCA and BDCA. As shown in \cref{fig:Spain}(a), despite both DCA and BDCA converged to the optimal solution, DCA required five times more iterations than BDCA to reach the same accuracy.

\begin{figure}[ht!]\centering
\subfigure[Value of the objective function]{\includegraphics[height=1.13\tam]{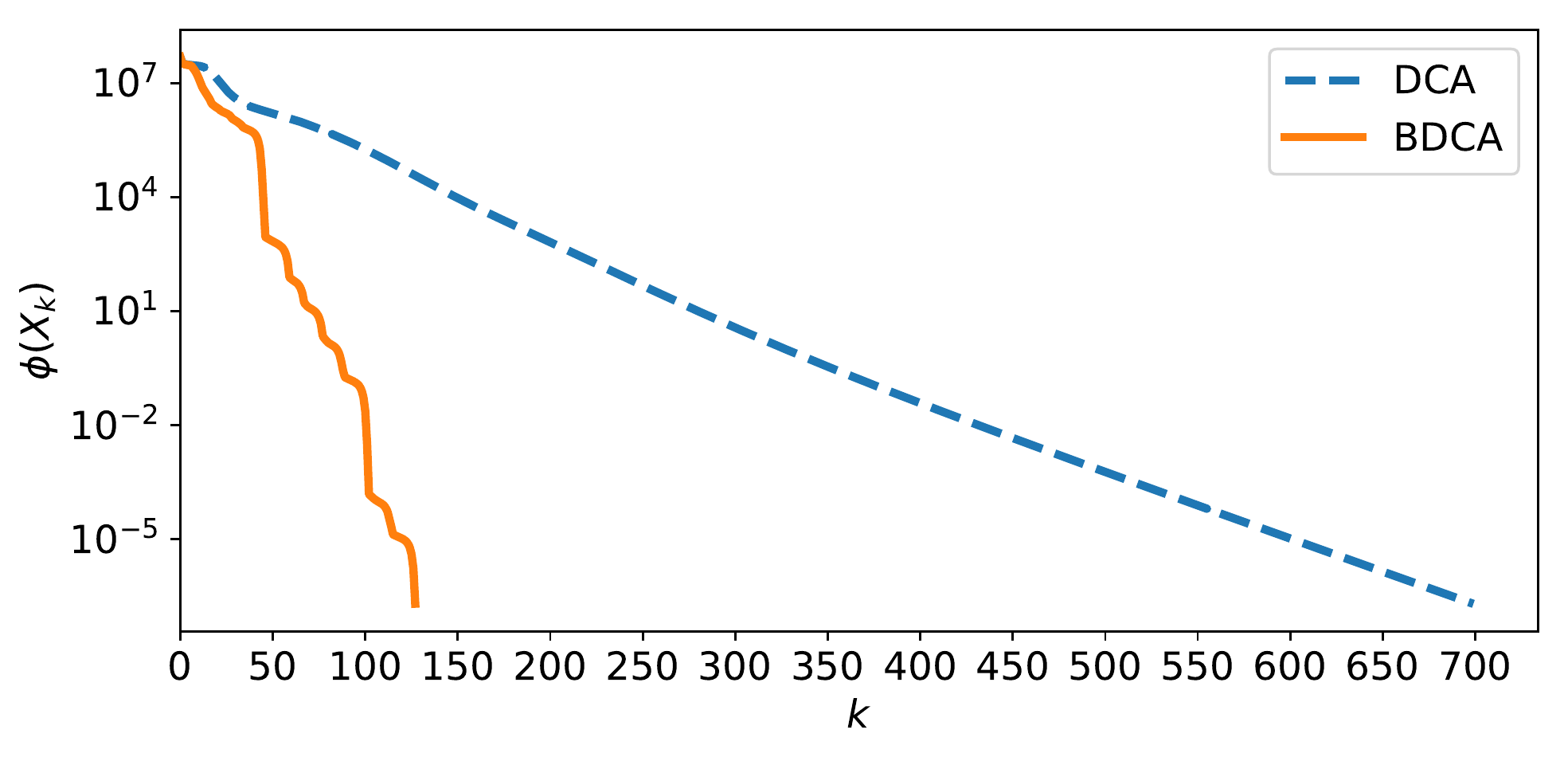}}
\subfigure[Starting point]{\includegraphics[height=1.13\tam]{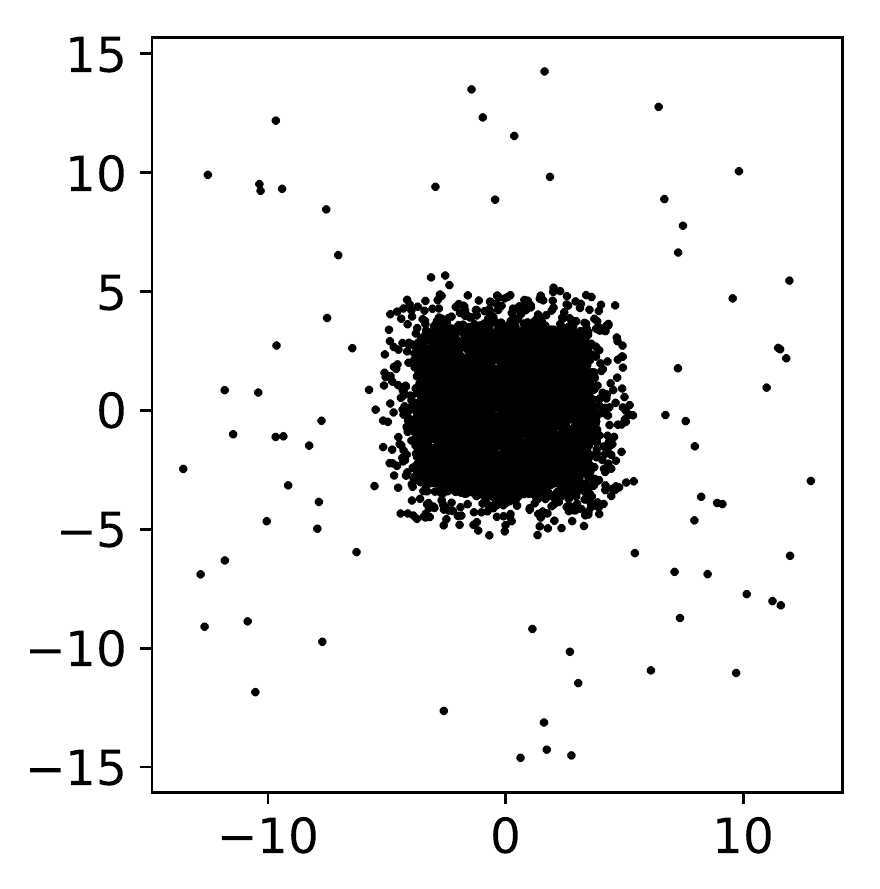}}
\subfigure[25 iterations of BDCA]{\includegraphics[height=1\tam]{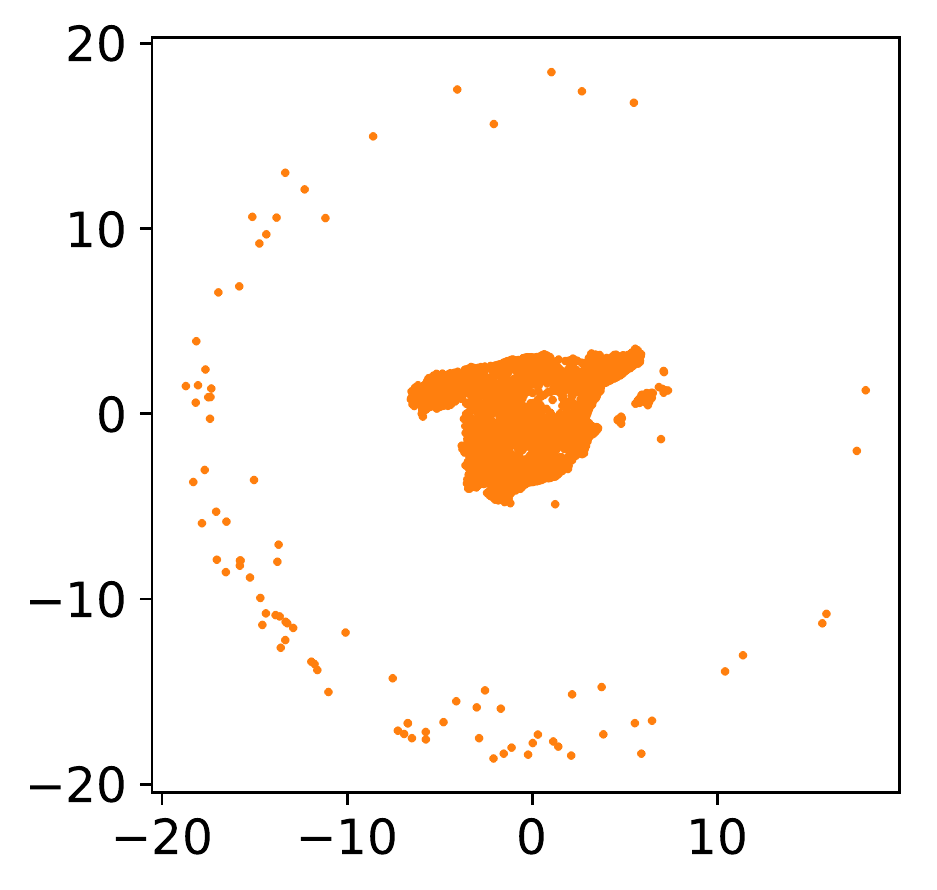}}
\subfigure[50 iterations of BDCA]{\includegraphics[height=1\tam]{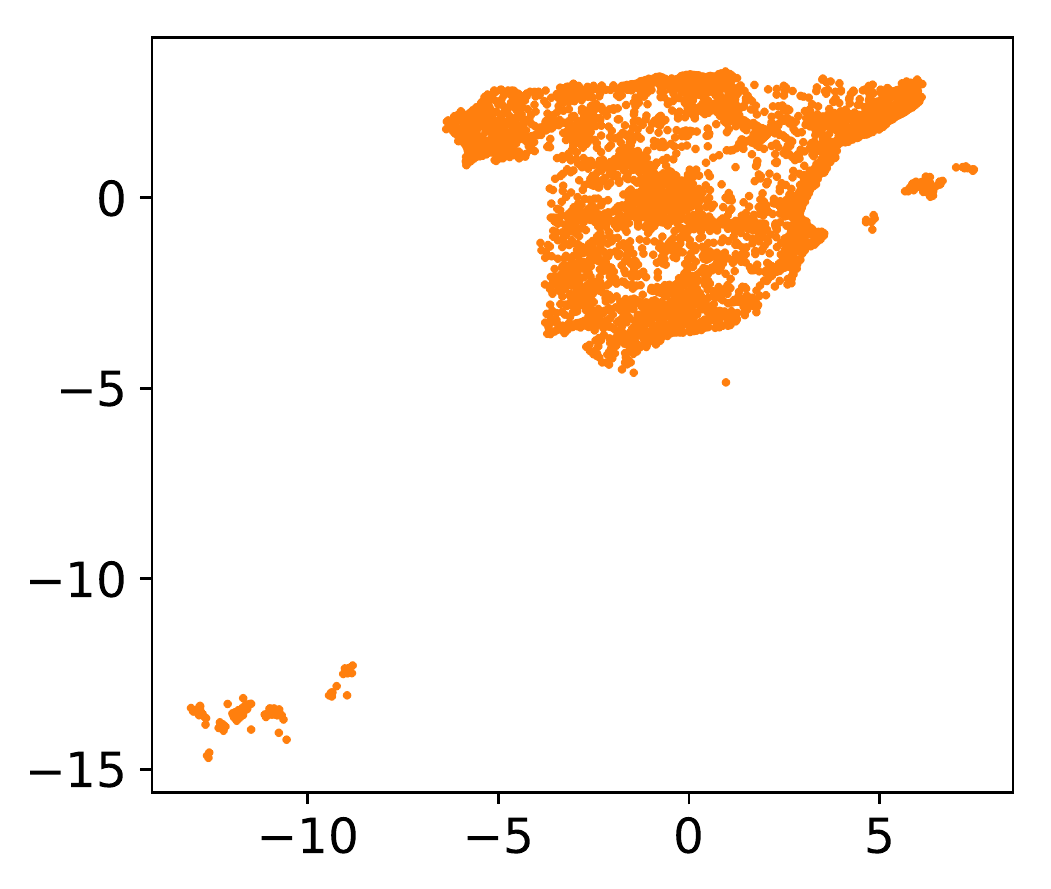}}
\subfigure[100 iterations of BDCA]{\includegraphics[height=1\tam]{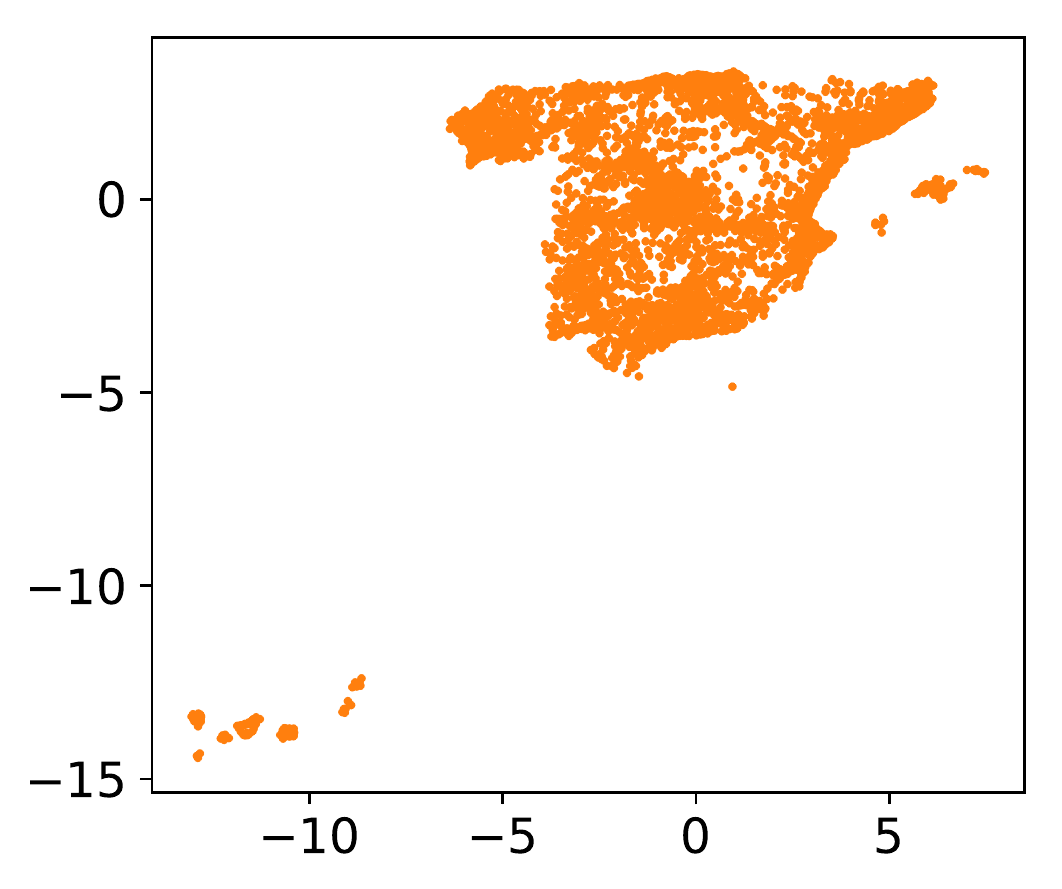}}\\
\subfigure[25 iterations of DCA]{\includegraphics[height=1.031\tam]{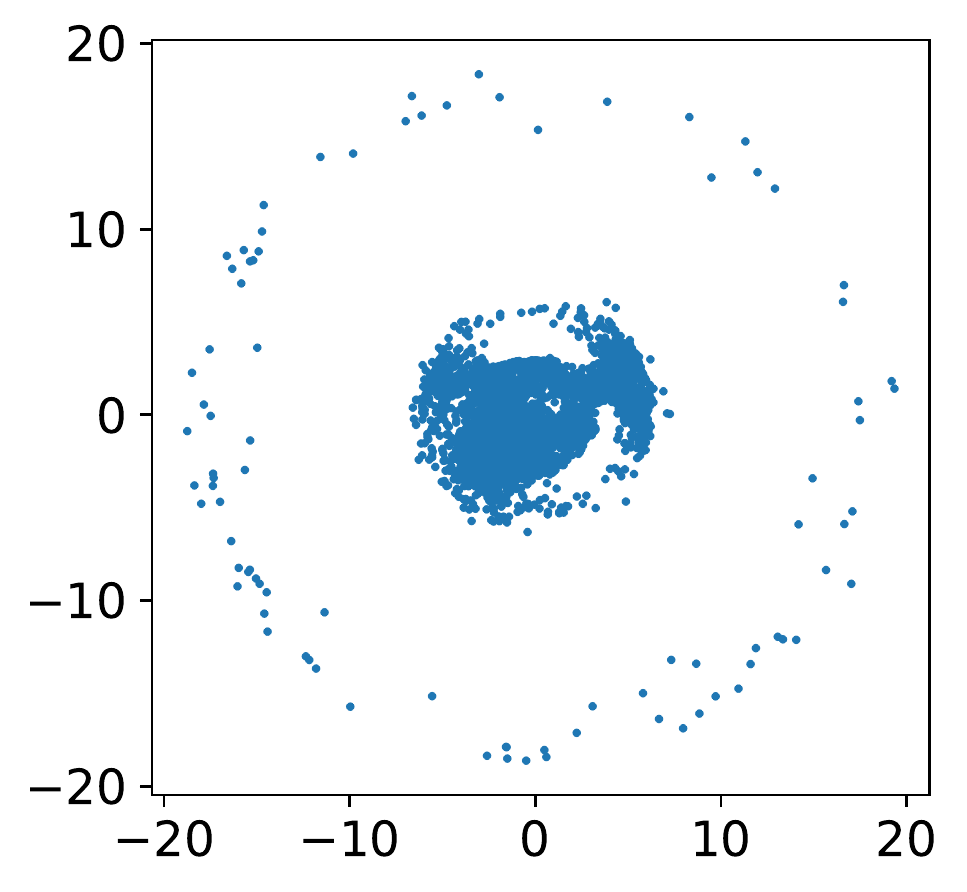}}
\subfigure[50 iterations of DCA]{\includegraphics[height=1.031\tam]{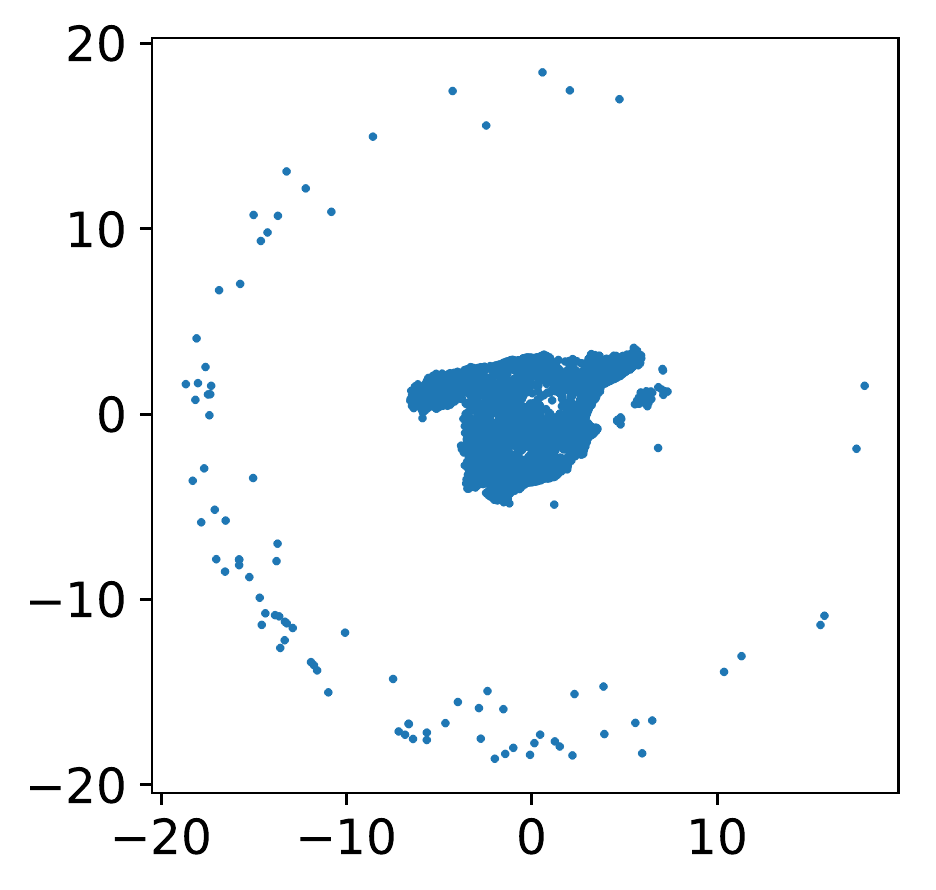}}
\subfigure[100 iterations of DCA]{\includegraphics[height=1.031\tam]{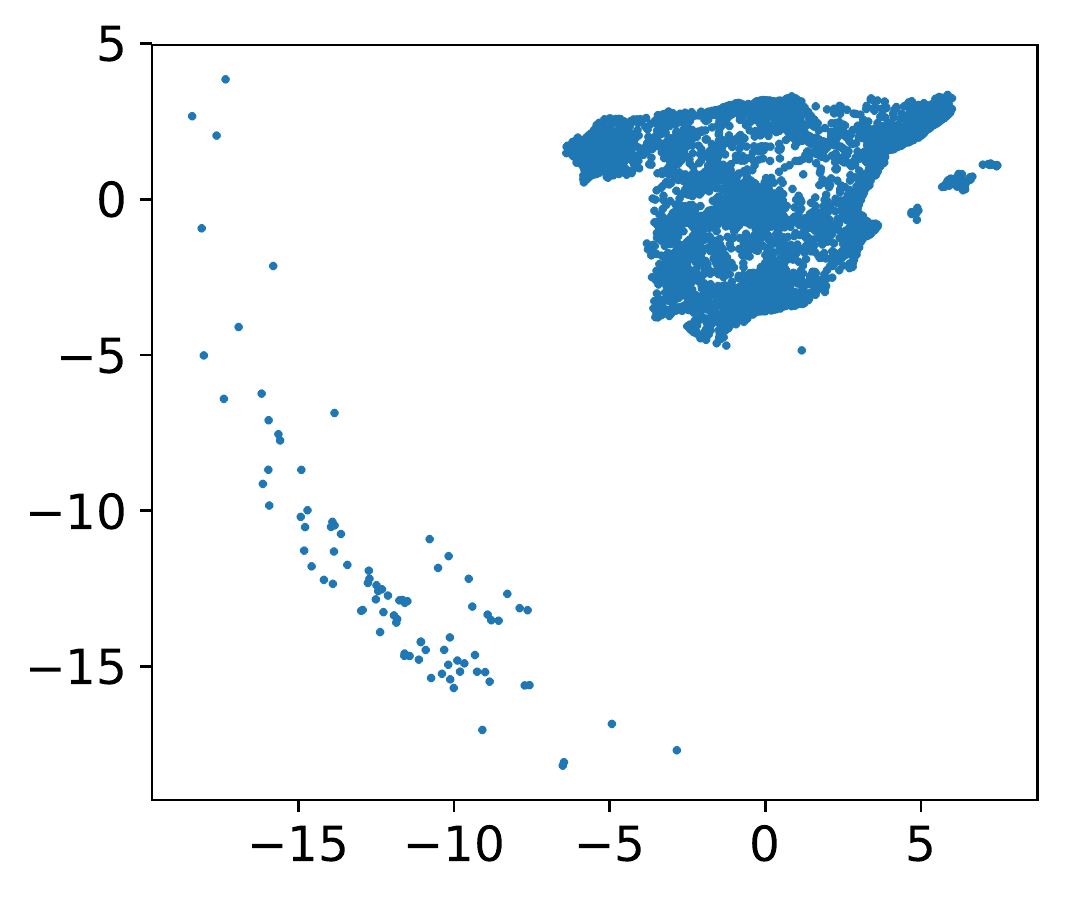}}
\subfigure[150 iterations of DCA]{\includegraphics[height=0.952\tam]{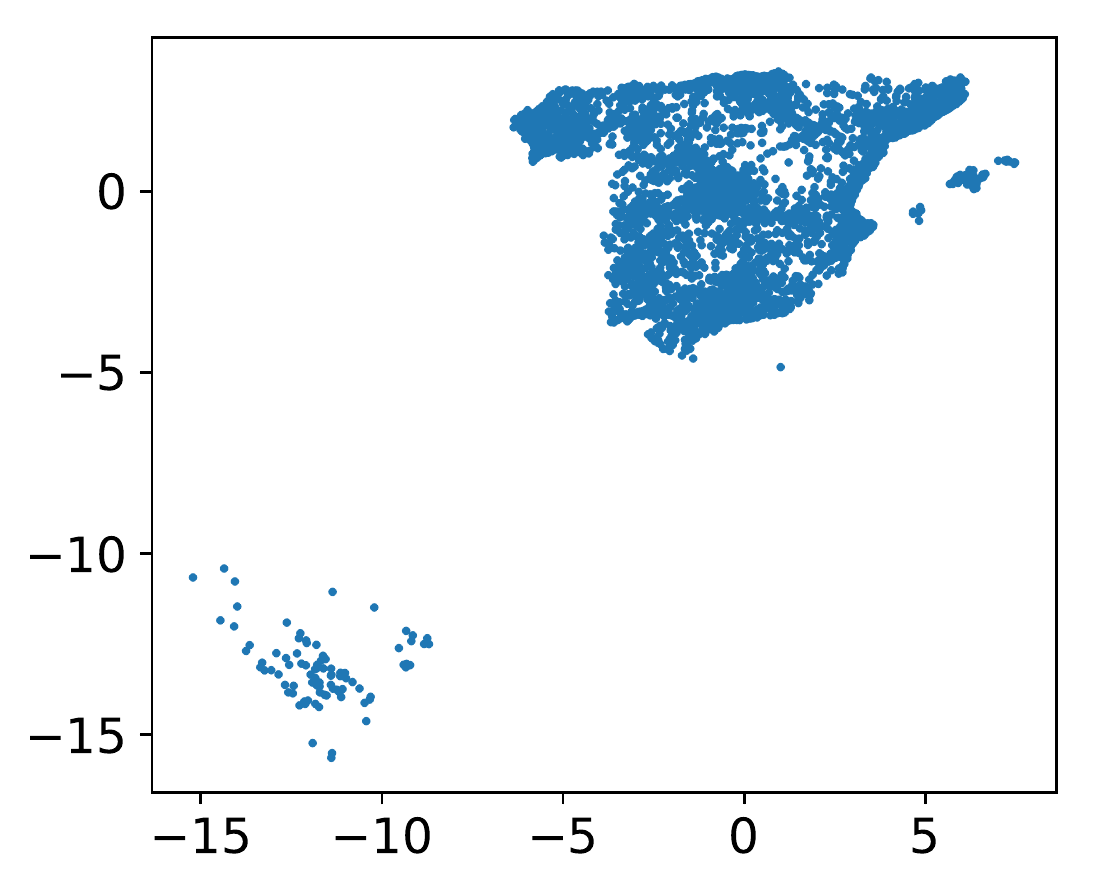}}
\subfigure[200 iterations of DCA]{\includegraphics[height=0.952\tam]{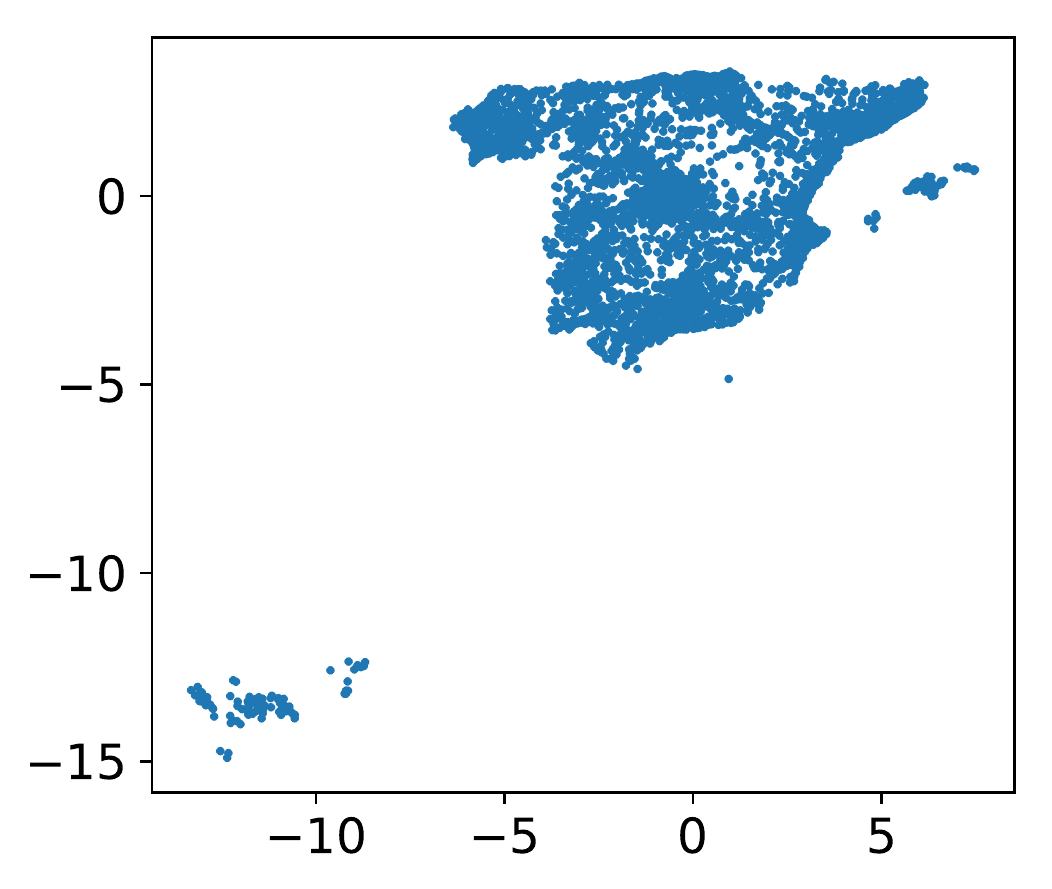}}
\subfigure[400 iterations of DCA]{\includegraphics[height=0.952\tam]{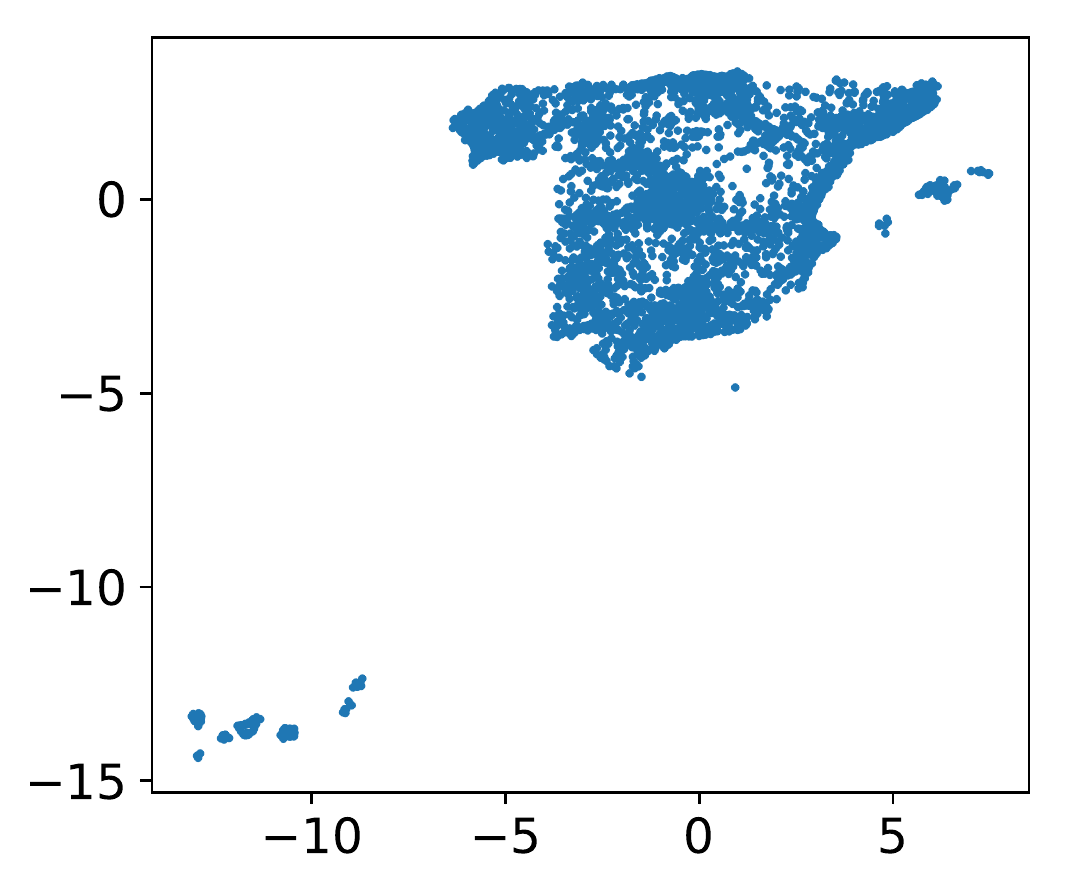}}
\caption{Comparison between DCA and BDCA when they are applied to the MDS problem of the Spanish cities described in \cref{ex:Spain} from the same random starting point.\label{fig:Spain}}
\end{figure}
\begin{figure}[ht!]
\centering
\includegraphics[width=0.495\textwidth]{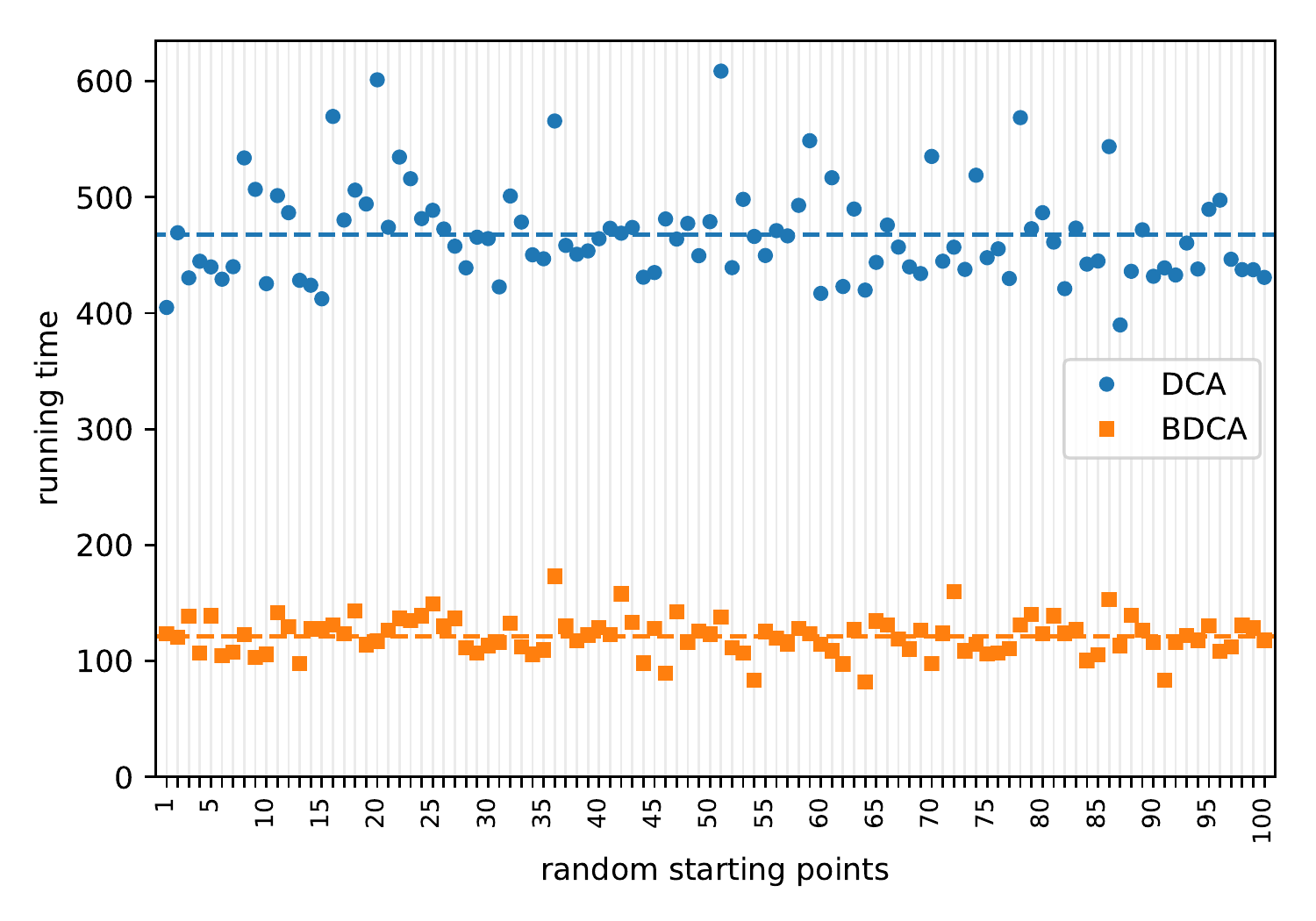}
\includegraphics[width=0.495\textwidth]{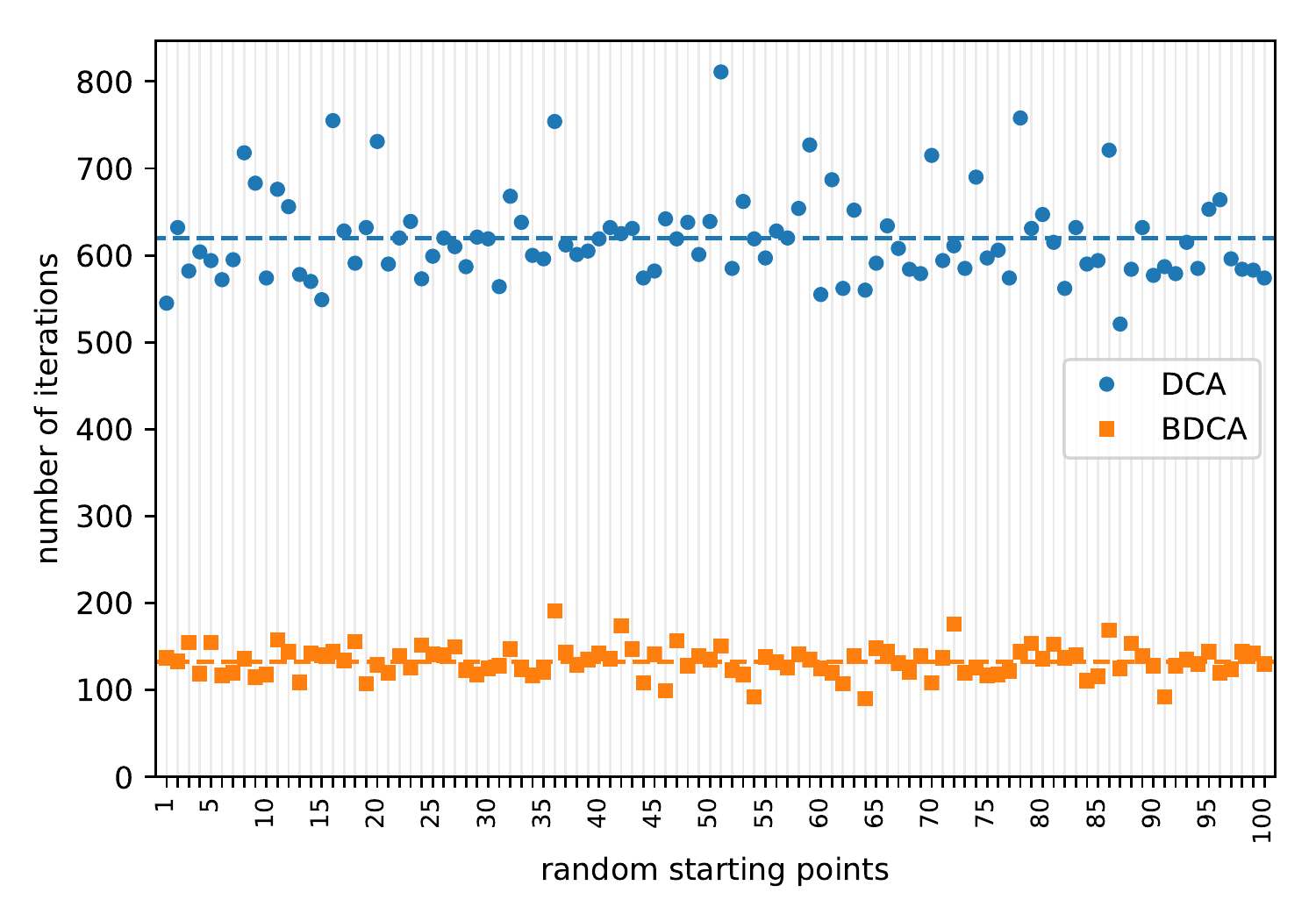}
\caption{Comparison between DCA and BDCA for solving the MDS problem of the Spanish cities described in \cref{ex:Spain}.
We represent the running time (left) and number of iterations (right) of DCA and BDCA for 100 random instances.
The dashed lines show the averages.}\label{fig:MDS_Spain_test}
\end{figure}
\end{experiment}

To demonstrate that the advantage shown in \cref{fig:Spain} is not unusual, we run both algorithms from 100 different random starting points until either the value of the objective function was smaller than $10^{-6}$, or until $\phi(X_k)-\phi(X_{k+1})<10^{-6}$. This second stopping criterion was used in 32 instances, and in all of them the value of $\phi$ was approximately equal to $26683.66$. The running time and the number of iterations of both algorithms is plotted in \cref{fig:MDS_Spain_test}. On average, BDCA was $3.9$ times faster than DCA. Further, BDCA was always more than $2.9$ times faster than DCA, and the number of iterations required by DCA was always more than $3.5$ times higher (on average, it was $4.7$ times higher). In fact, the minimum time required by DCA within all the random instances (389.9 seconds) was $2.2$ times higher than the maximum time spent by BDCA (173.2 seconds).

\begin{experiment}[MDS with random data]\label{exp:4}
To test randomly generated data, we considered two cases:
\begin{itemize}
\item \textbf{Case~1}: the dissimilarities are distances between objects in $\R^p$; thus, the optimal value is $0$.
\item \textbf{Case~2}: the dissimilarities are distances between objects in $\R^{2p}$; hence, the optimal value is unknown a priori.
\end{itemize}
The data was obtained by generating a matrix $M$ in $\mathbb{R}^{n\times p}$ and $\mathbb{R}^{n\times 2p}$ with entries randomly drawn from a normal distribution having a mean of $0$ and a standard deviation of $10$. Then, the values of $\delta_{ij}$ were determined by the distance matrix between the rows of $M$. We used the same stopping criteria as in~\cite{Tao01}: for Case~1, the algorithms were stopped when the value of the merit function was smaller than $10^{-6}$, while for Case~2, they were stopped when the relative error of the objective function was smaller than~$10^{-3}$.

The ratios between the respective running times and number of iterations of DCA and BDCA are shown in \cref{fig:exp4_1}. On average, BDCA was $2.6$ times faster than DCA, and the advantage was bigger both for Case~1 and for $p=3$. For Case~2 we can find some instances where BDCA was only 1.5 times faster than DCA. In \cref{fig:exp4_2} we observe that these instances seem to be outliers, for which DCA was faster than usual. The value of the objective function with respect to time of both algorithms for a particular large random instance is plotted in \cref{fig:exp4_3}.

\begin{figure}[ht!]\centering
\subfigure[Case 1 (running time ratio)]{\includegraphics[width=0.48\textwidth]{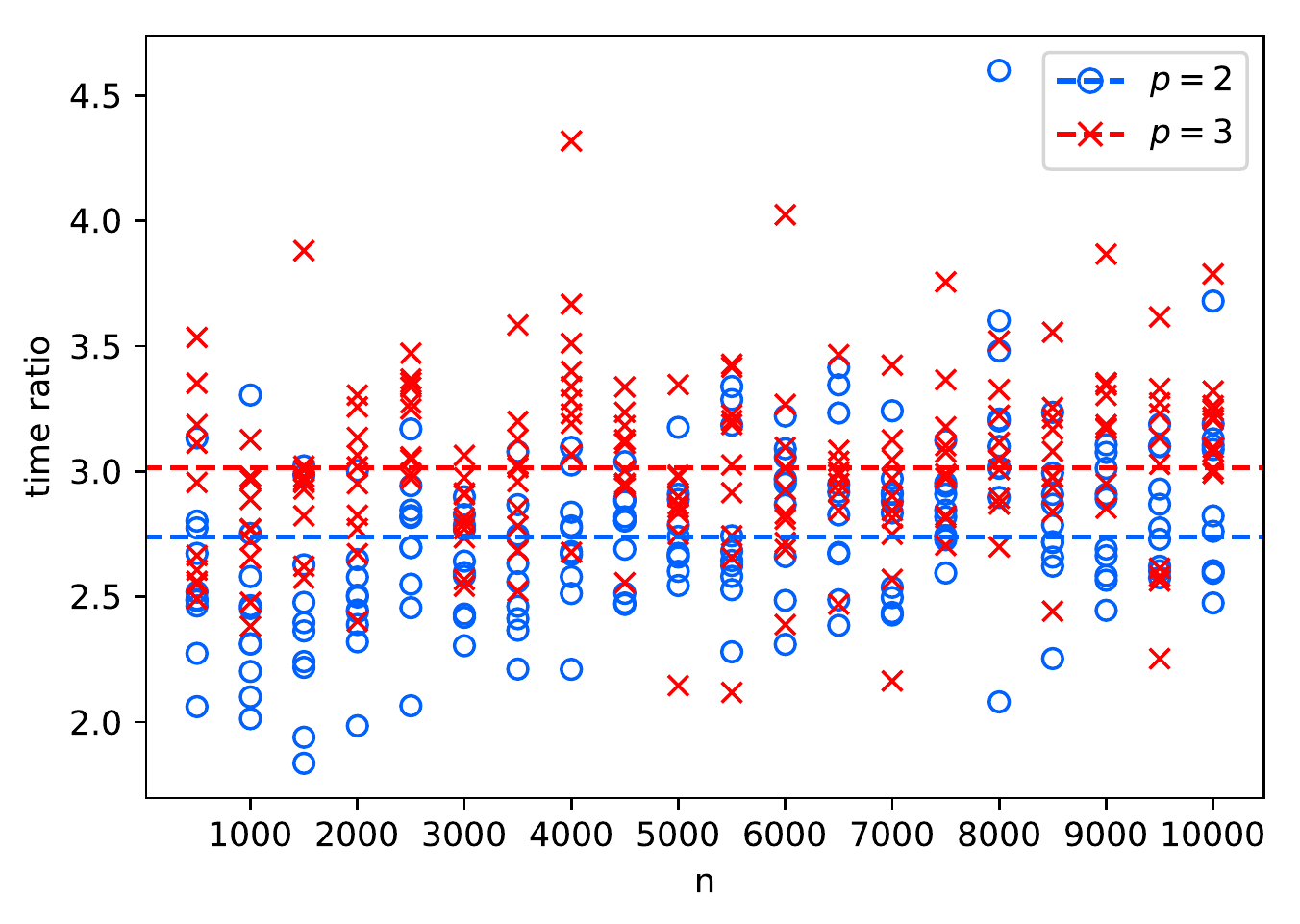}}\hfill
\subfigure[Case 1 (number of iterations ratio)]{\includegraphics[width=0.48\textwidth]{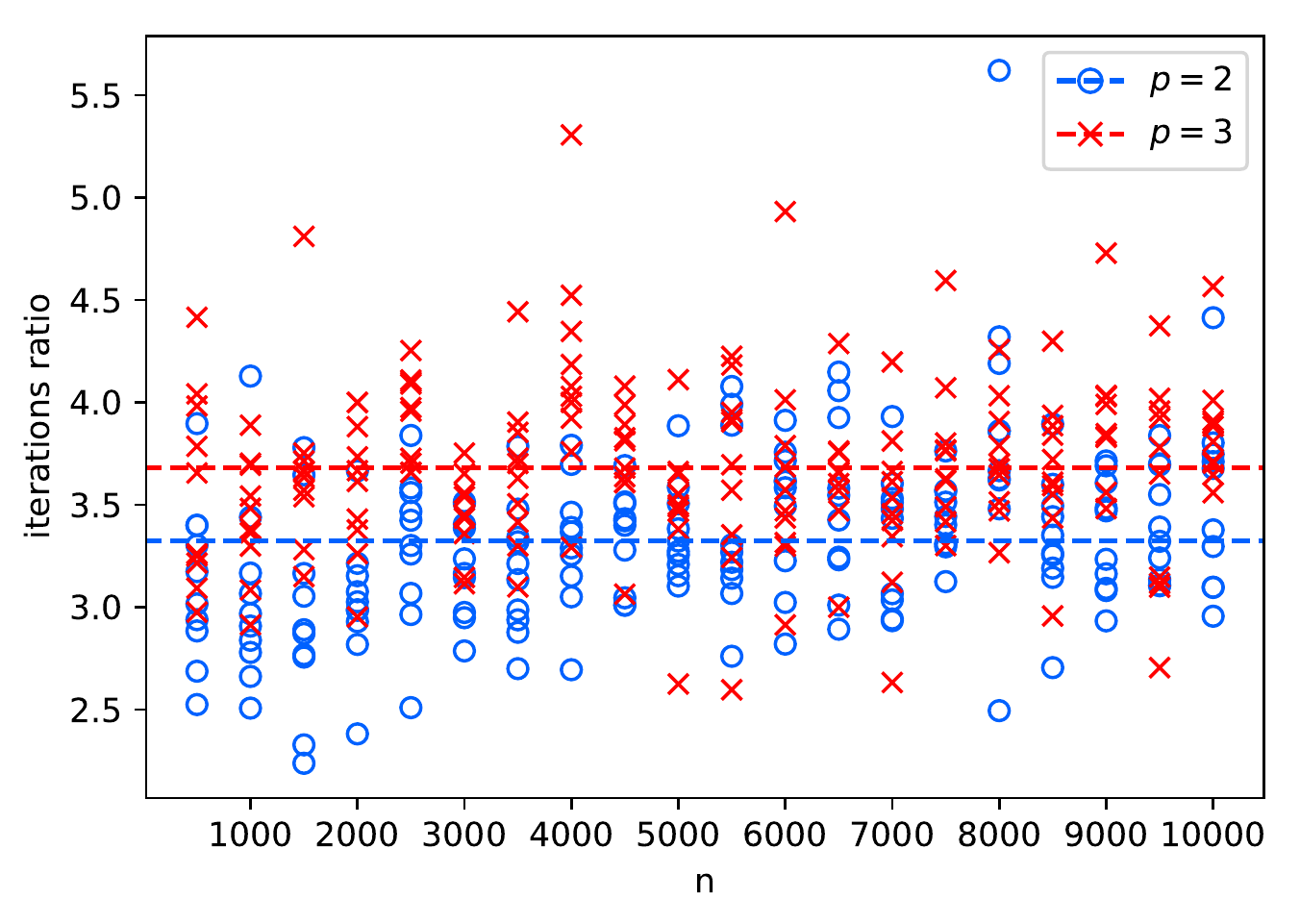}}
\subfigure[Case 2 (running time ratio)]{\includegraphics[width=0.48\textwidth]{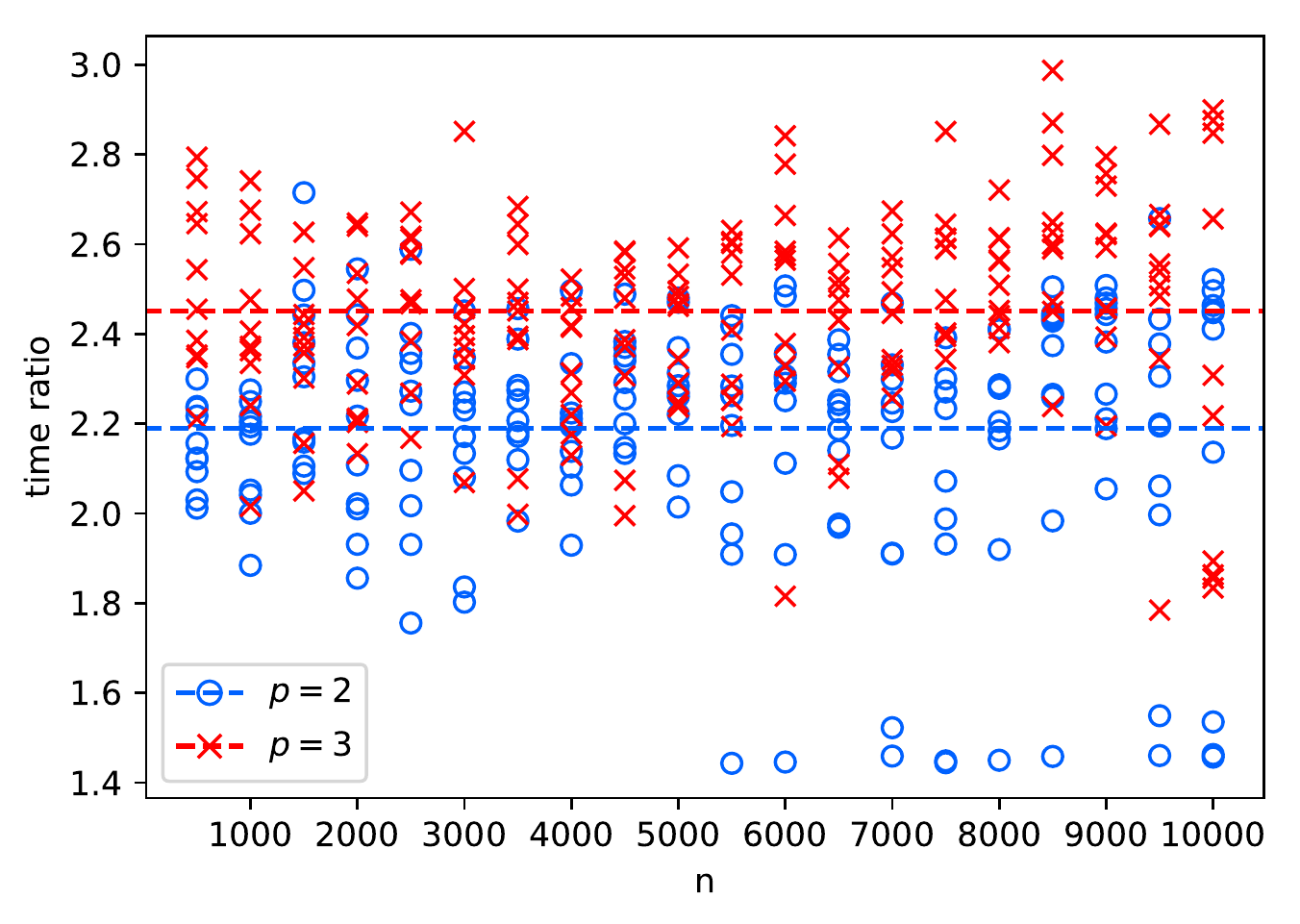}}\hfill
\subfigure[Case 2 (number of iterations ratio)]{\includegraphics[width=0.48\textwidth]{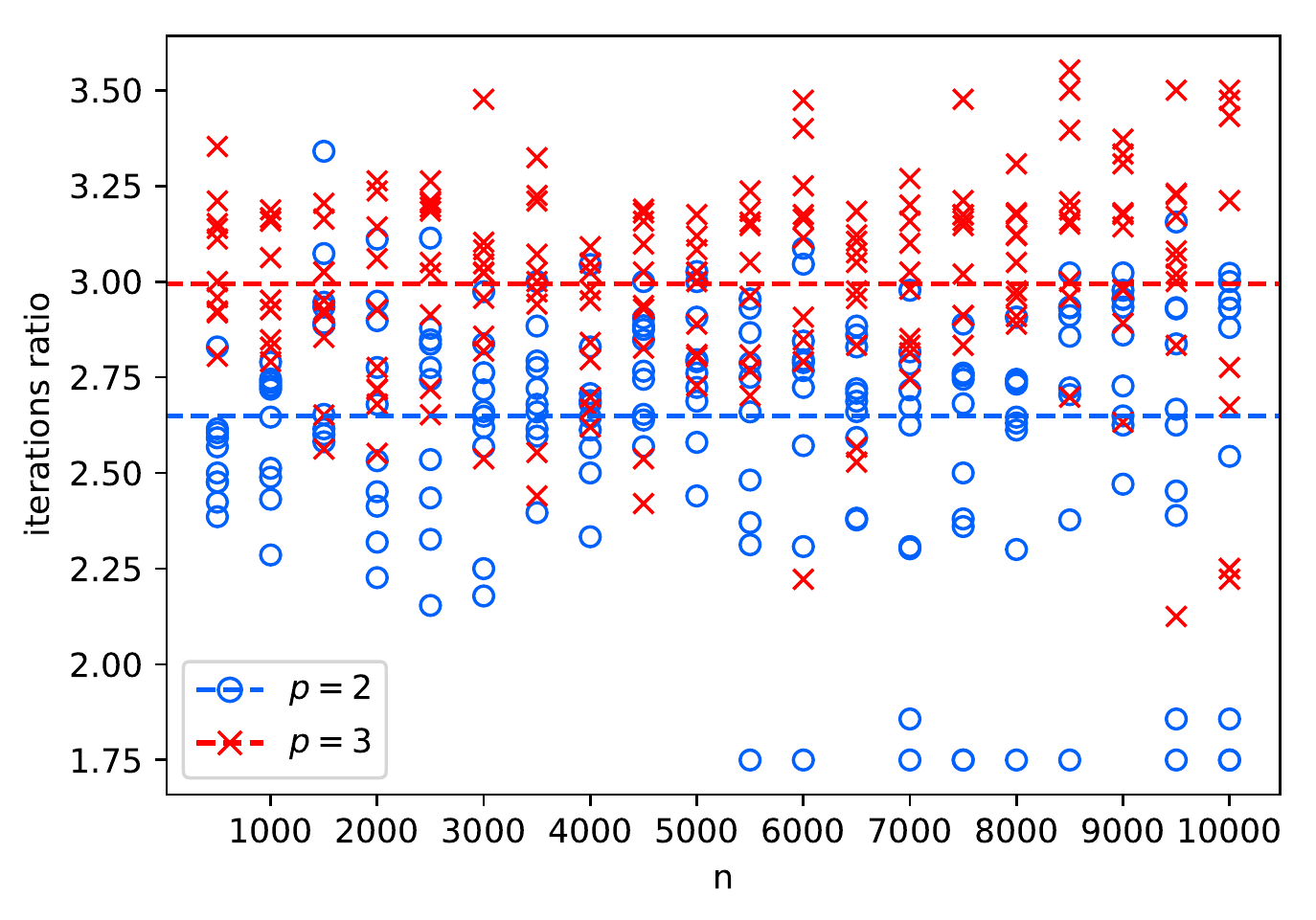}}
\caption{Comparison between DCA and BDCA for solving the MDS problems with random data described in \cref{exp:4}. We represent the ratios of running time and number of iterations between DCA and BDCA for ten random instances for each value of $n\in\{500,1000,\ldots,10,\!000\}$ and $p\in\{2,3\}$. For each $p$, the average value is represented with a dashed line.\label{fig:exp4_1}}
\end{figure}

\begin{figure}[ht!]\centering
\subfigure[Running time]{\includegraphics[width=0.48\textwidth]{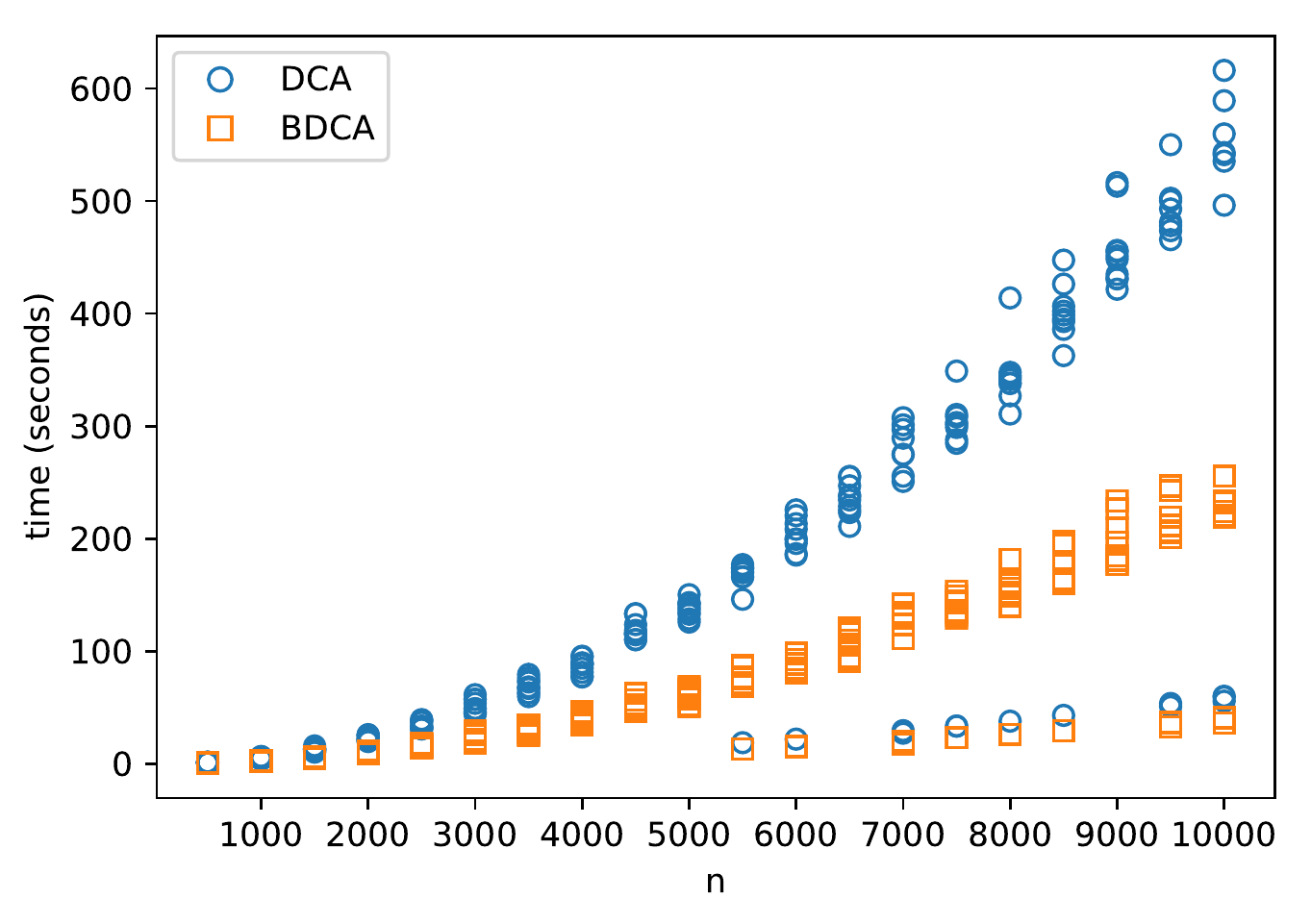}}\hfill
\subfigure[Number of iterations]{\includegraphics[width=0.48\textwidth]{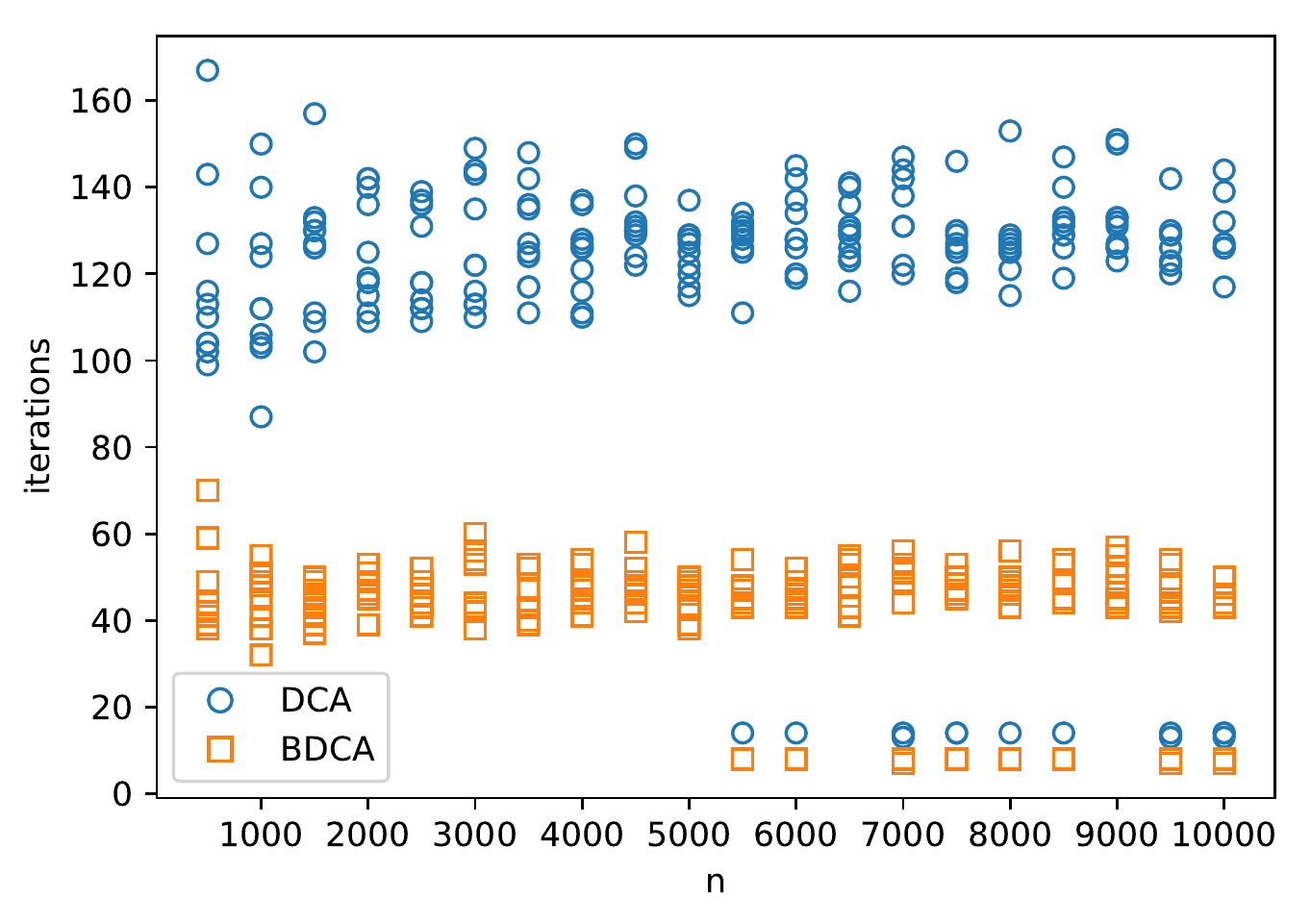}}
\caption{Running time and number of iterations for DCA and BDCA when applied to the random data described in \cref{exp:4} for Case~2 with  $p=2$.\label{fig:exp4_2}}
\end{figure}

\begin{figure}[ht!]\centering
\subfigure[Case 1]{\includegraphics[height=0.32\textwidth]{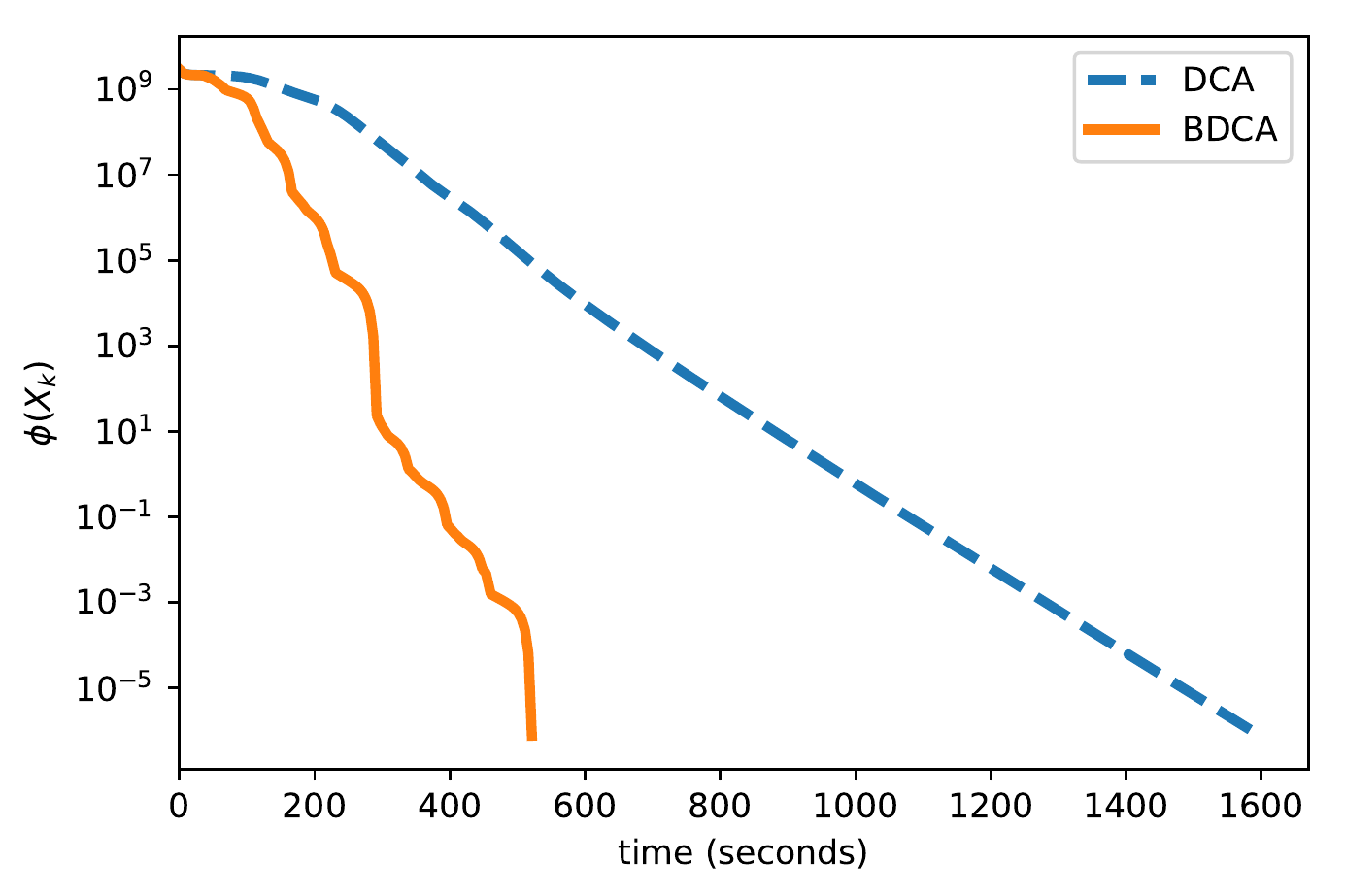}}\hfill
\subfigure[Case 2]{\includegraphics[height=0.32\textwidth]{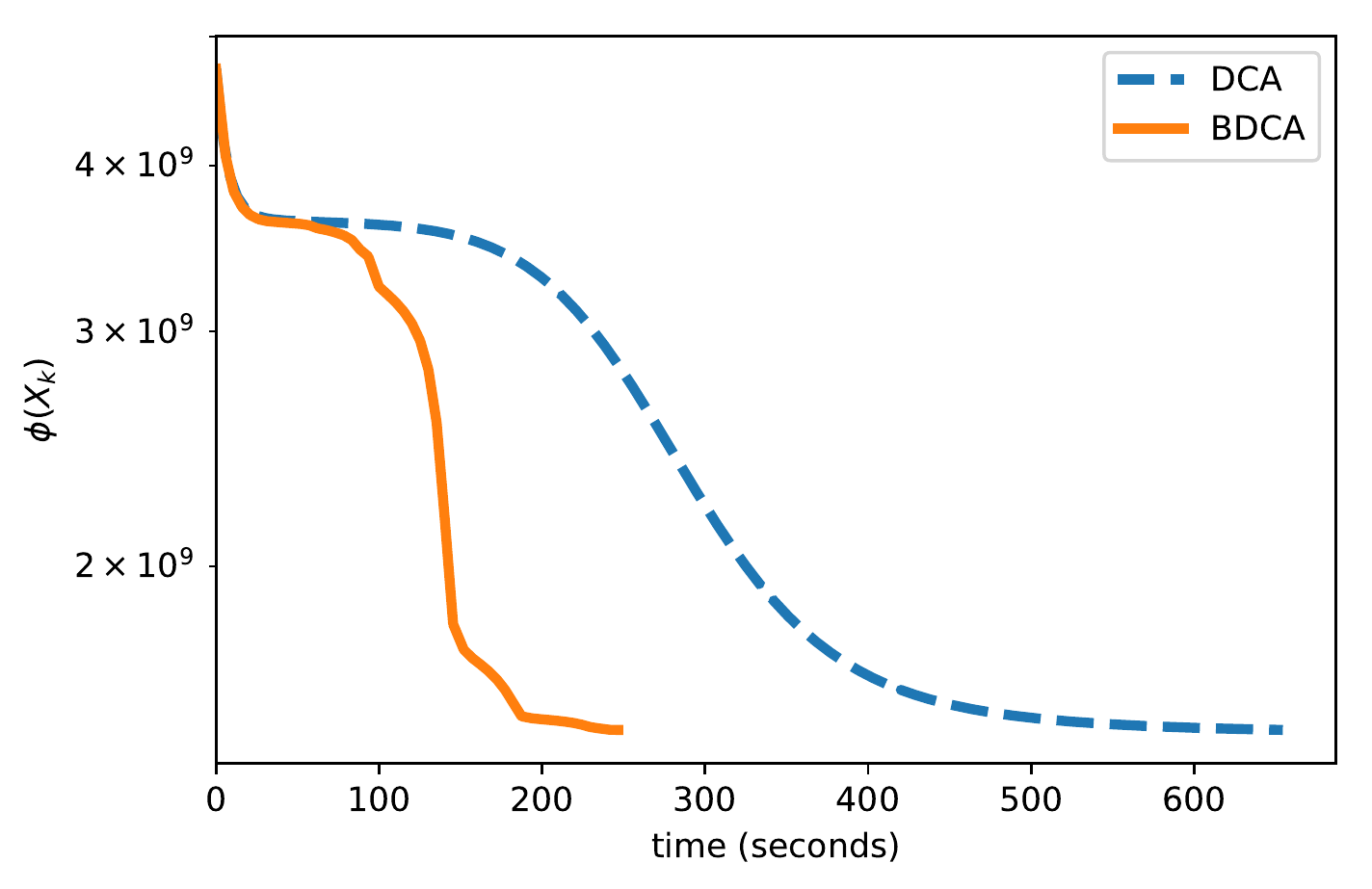}}
\caption{Value of the objective function of DCA and BDCA (using logarithmic scale) against CPU time for one particular random instance of each of the two test cases in \cref{exp:4} (with $p=3$ and $n= 10,\!000$).\label{fig:exp4_3}}
\end{figure}
\end{experiment}

\section{Concluding Remarks}\label{sec:remark}

We have developed a version of the Boosted DC Algorithm proposed in \cite{BDCA2018} for solving DC programming problems when the objective function is not differentiable. Our convergence results were obtained under some standard assumptions. The global convergence and convergent rate was established assuming the strong Kurdyka--\L{}ojasiewicz inequality. It remains as an open question whether the results still hold under the Kurdyka--\L{}ojasiewicz inequality, i.e., the corresponding inequality associated with the limiting subdifferential instead of the Clarke's one. This is a topic for future research.

We have applied our algorithm for solving two important problems in data science, namely, the Minimum Sum-of-Squares Clustering problem and the Multidimensional Scaling problem. Our numerical experiments indicate that BDCA outperforms DCA, being on average more than sixteen times faster in the first problem and nearly three times faster in the second problem, in both computational time and  number of iterations. In general, the advantage of BDCA against DCA will always depend on two key factors: the difficulty in solving the subproblems $(\mathcal{P}_k)$ and the number of backtracking steps needed at each iteration. A relatively small backtracking parameter $\beta\approx 0.1$ seems to work well in practice.

An important novelty of the proposed algorithm is the flexibility in the choice of the trial step size $\overline{\lambda}_k$ in the line search step of BDCA, which had to be constant in our previous work~\cite{BDCA2018}. A comparison of both strategies if shown in \cref{fig:compare_self-adaptive} using the same starting point as in \cref{fig:Spain}, where we can observe that each drop in the function value of the self-adaptive strategy was originated by a large increase of the step size. Although BDCA with constant choice was slower, it still needed three times less iterations than DCA, see \cref{fig:Spain}(a). The complete freedom in the choice of $\overline{\lambda}_k$ permits to use the information available from previous iterations, as done in \cref{sec:numa} with what we call the \emph{self-adaptive trial step size}. Roughly, this strategy allowed us to obtain a two times speed up of BDCA in all our numerical experiments, when compared with the constant strategy. There are many possibilities in the choice of the trial step size to investigate, which could further improve the performance of BDCA.

\begin{figure}[ht!]\centering
\includegraphics[width=0.95\textwidth]{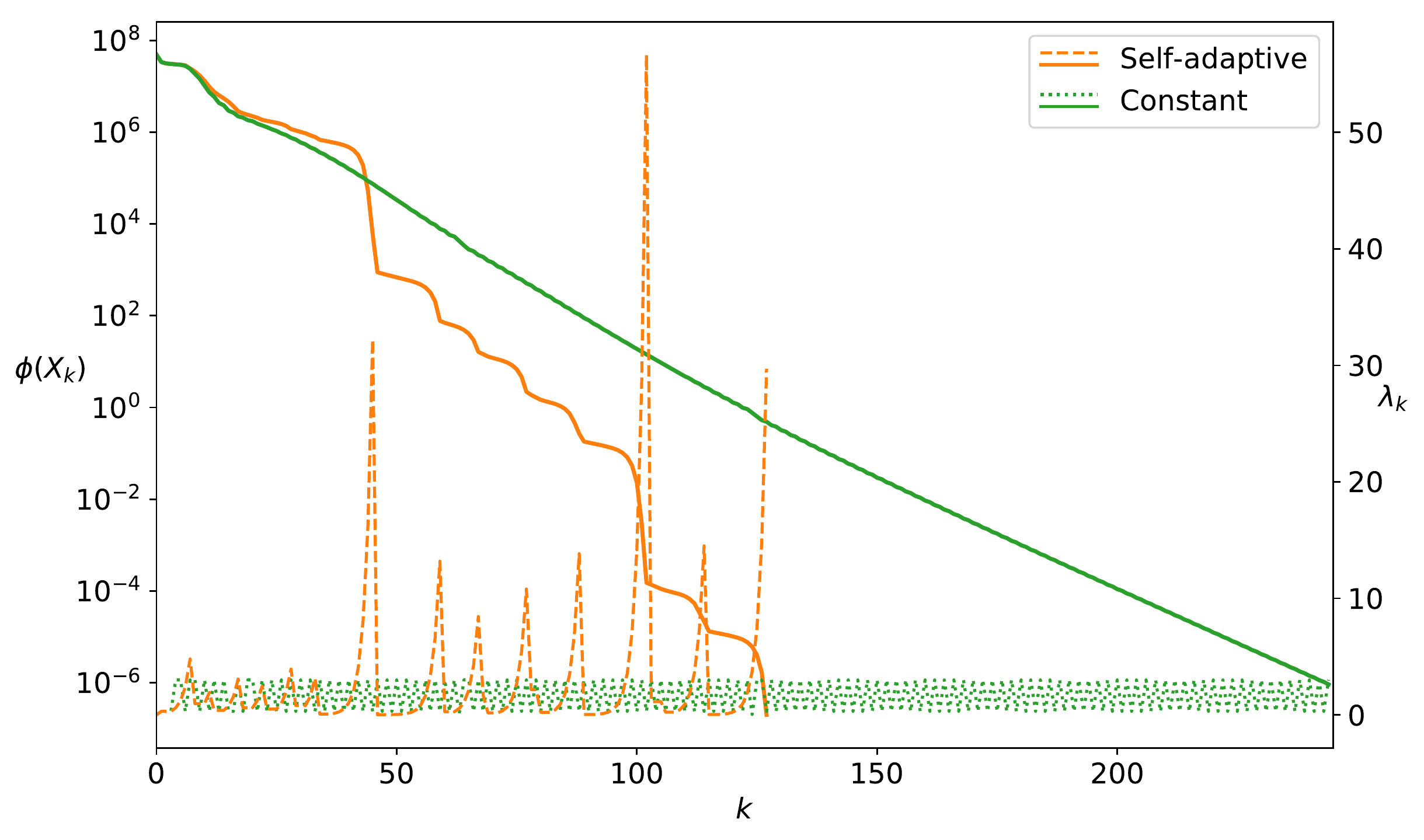}
\caption{Comparison of the self-adaptive and the constant (with $\overline{\lambda}_k=3$) choices for the trial step sizes of BDCA in Step~4, using the same starting point as in \cref{fig:Spain}. The plot includes two scales, a logarithmic one for the objective function values, and another one for the step sizes (which are represented with discontinuous lines).}\label{fig:compare_self-adaptive}
\end{figure}

Finally, we would like to mention that applications of BDCA to the
Bilevel Hierarchical Clustering problem~\cite{Nam2018} and the Multicast Network Design problem \cite{Geremew2018} can be also considered. However, due to the inclusion of a penalty and a smoothing parameter, the DC objective function associated with these problems changes at each iteration,  see \cite{Geremew2018,Nam2018} for details. Therefore, the applicability of BDCA should be justified in this setting. This serves as an interesting question for future research.

\section*{Acknowledgements}
The authors wish to thank Aris Daniilidis for his help with \cref{rem:Aris}.

The first author was supported by MINECO of Spain and ERDF of EU, as part of the Ram\'on y Cajal
program (RYC-2013-13327) and the Grants MTM2014-59179-C2-1-P and PGC2018-097960-B-C22.
The second author was supported by FWF (Austrian Science Fund), project I 2419-N32.

\bibliographystyle{siamplain}

\begin{thebibliography}{}

\bibitem{AnNam2017}
{\sc N.T. An and N.M. Nam}, \emph{Convergence analysis of a proximal point algorithm for minimizing differences
of functions}, Optimization, 66 (2017), pp.~129--147.


%
\bibitem{BDCA2018}
{\sc F.J. Arag{\'o}n Artacho, R. Fleming, and P.T. Vuong}, \emph{Accelerating the {DC}
  algorithm for smooth functions}, Math. Program., 169B (2018), pp.~95--118.
%


\bibitem{Attouch2010} {\sc H. Attouch, J. Bolte, P. Redont, and A. Soubeyran}, \emph{Proximal alternating minimization and projection
methods for nonconvex problems. An approach based on the Kurdyka--\L{}ojasiewicz inequality}, Math.
Oper. Res., 35 (2010), pp.~438--457.

\bibitem{attouch2009convergence}
{\sc H. Attouch and J. Bolte}, \emph{On the convergence of the proximal algorithm for nonsmooth functions involving analytic features}, Math. Program., 116 (2009), pp.~5--16.

\bibitem{BB2018} {\sc S. Banert and R. Bo\c{t}}, \emph{A general double-proximal gradient algorithm for d.c.
programming},  Math. Program., (2018), DOI \href{https://doi.org/10.1007/s10107-018-1292-2}{10.1007/s10107-018-1292-2}.

\bibitem{Bock1998}  {\sc H.H. Bock}, \emph{Clustering and neural networks}, in Advances in Data Science and
  Classification, Springer, Berlin, 1998, pp.~265--277.
%
\bibitem{bolte2007lojasiewicz}
{\sc J. Bolte, A. Daniilidis, and A. Lewis}, \emph{The \L{}ojasiewicz inequality for nonsmooth subanalytic functions with applications to subgradient dynamical systems},
 SIAM J. Optimiz., 17 (2007), pp.~1205--1223.
%

\bibitem{bolteArisLewis2007}
{\sc J. Bolte, A. Daniilidis, A. Lewis, and M. Shiota}, \emph{Clarke subgradients of stratifiable functions}, SIAM J. Optim., 18 (2007), pp.~556--572.

\bibitem{BDLM10}
{\sc J. Bolte, A. Daniilidis, O. Ley, and L. Mazet}, \emph{Characterizations of Lojasiewicz inequalities: subgradient flows, talweg, convexity}, Trans. Amer. Math. Soc., 362 (2010), pp.~3319--3363.

\bibitem{Bolte2013}
{\sc J. Bolte, S. Sabach, and M. Teboulle}, \emph{Proximal alternating linearized minimization for nonconvex and
  nonsmooth problems}, Math. Program., 146 (2013), pp.~459--494.

\bibitem{Clarke}
{\sc F.H. Clarke}, \emph{Optimization and Nonsmooth Analysis}, Second edition, Classics Appl. Math. 5, SIAM, Philadelphia, 1990.

\bibitem{CYY2018}
{\sc T.H. Cuong, N.D. Yen, and Y.C. Yao}, \emph{Qualitative Properties of the Minimum
Sum-of-Squares Clustering Problem}, arXiv: \href{https://arxiv.org/abs/1810.02057}{1810.02057}
%
\bibitem{fukushima_generalized_1981}
{\sc M. Fukushima and H. Mine}, \emph{A generalized proximal point algorithm for certain non-convex
  minimization problems}, Int. J. Syst. Sci., 12 (1981), pp.~989--1000.


\bibitem{Geremew2018}
{\sc W. Geremew, N.M. Nam, A. Semenov, V. Boginski, and E. Pasiliao}, \emph{A DC programming approach for solving multicast
network design problems via the Nesterov smoothing
technique}, J. Glob. Optim., 72 (2018), pp.~705--729.

\bibitem{Kurdyka}
{\sc K. Kurdyka}, \emph{On gradients of functions definable in o-minimal structures}, Annales de l'Institut Fourier
(Grenoble), 48 (1998), pp.~769--783.

\bibitem{JOTA2018}
{\sc H.A. Le~Thi, V.N. Huynh, and T. Pham~Dinh}, \emph{Convergence analysis of Difference-of-Convex Algorithm with subanalytic data}, J. Optim. Theory Appl., 179 (2018), pp.~103--126.

\bibitem{Tao01}
{\sc H.A. Le~Thi, and T. Pham~Dinh} \emph{D.C. programing approach to the multidimensional scaling problem}, in From Local to Global Optimization, P. Pardalos and P. Varbrand, eds, Kluwer, Dodrecht, 2001, pp.~231--276.

\bibitem{tao2005dc}
{\sc H.A. Le~Thi and T. Pham~Dinh}, \emph{The DC (difference of convex functions) programming and DCA revisited with DC models of real world nonconvex optimization problems}, Ann. Oper. Res., 133 (2005), pp.~23--46.

\bibitem{An2018}
{\sc H.A. Le~Thi and T. Pham Dinh}, \emph{DC Programming and DCA: Thirty Years of
Developments}, Math. Program., 169 (2018), pp.~5--68.

\bibitem{An2014}
{\sc H.A. Le~Thi and T. Pham Dinh}, \emph{ Recent advances in DC programming and DCA},
Nguyen N-T, Le Thi HA, eds. Trans. Comput. Collective Intelligence Lecture
Notes in Computer Science, Vol. 8342 (Springer, Berlin),  pp. 1--37, 2014.

\bibitem{an_numerical_1996}
{\sc H.A. Le~Thi, T. Pham Dinh, and L.D. Muu}, \emph{Numerical solution for optimization over the efficient set by D.C. optimization algorithms}, Oper. Res. Lett., 19 (1996), pp.~117--128.

\bibitem{lojasiewicz1965ensembles}
{\sc S. \L{}ojasiewicz}, {\em Ensembles semi-analytiques}, Institut des Hautes Etudes
Scientifiques, Bures-sur-Yvette (Seine-et-Oise), France, 1965.
%
\bibitem{mine_minimization_1981}
{\sc H. Mine and M. Fukushima}, \emph{A minimization method for the sum of a convex function and a
  continuously differentiable function}, J. Optim. Theory Appl., 33 (1981), pp.~9--23.



\bibitem{moudafi_proximalDC_2006}
{\sc A. Moudafi and P. Mainge}, \emph{On the convergence of an approximate proximal method for DC
  functions}, J. Comput. Math., 24 (2006), pp.~475--480.

\bibitem{Nam2018}
{\sc N.M. Nam, W. Geremew, R. Reynolds, and T. Tran}, \emph{Nesterov's smoothing technique and minimizing
differences of convex functions for hierarchical clustering},  Optim. Lett., 12 (2018), pp.~455--473.

\bibitem{Noll2014}
{\sc D. Noll}, \emph{Convergence of non-smooth descent methods using the Kurdyka--\L{}ojasiewicz  inequality},
 J. Optim. Theory Appl., 160 (2014), pp.~553--572.

\bibitem{OB15}
{\sc B. Ordin and A.M. Bagirov}, \emph{A heuristic algorithm for solving the minimum sum-of-squares clustering problems}, J. Glob. Optim., 61 (2015), pp.~341--361.

\bibitem{tao1998dc}
{\sc T. Pham~Dinh and H.A. Le~Thi}, \emph{A DC optimization algorithm for solving the trust-region subproblem}, SIAM J. Optim., 8 (1998), pp.~476--505.


\bibitem{TR}
{\sc R.T. Rockafellar}, \emph{Convex Analysis}, Princeton University Press, 1972.

\bibitem{RW}
{\sc R.T. Rockafellar and R.J.-B. Wets}, \emph{Variational Analysis}, Grundlehren Math. Wiss. 317,
Springer, New York, 1998.

\bibitem{TT97}
{\sc P.D. Tao and H.A. Le Thi}, \emph{Convex analysis approach to DC programming: theory, algorithms and applications}, Acta Mathematica Vietnamica,
22 (1997), pp.~289--355.

\bibitem{Toland79}
{\sc J.F. Toland}, \emph{On subdifferential calculus and duality in non-convex optimization}, Bull. Soc. Math. Fr. M\'em. 60  (1979) (Proc. Colloq., Pau 1977), pp.~177--183.

\bibitem{XuXue2017}
{\sc H.M. Xu, H. Xue, X.H. Chen, Y.Y. Wang}, \emph{Solving Indefinite Kernel Support Vector Machine with
Difference of Convex Functions Programming}, Proceedings of the Thirty-First AAAI Conference on Artificial Intelligence (AAAI-17).
\end{thebibliography}

\end{document}